\theoremstyle{plain}
\newtheorem{theorem}{Theorem}[section]
\newtheorem{lemma}[theorem]{Lemma}
\newtheorem{proposition}[theorem]{Proposition}
\newtheorem{corollary}[theorem]{Corollary}
\newtheorem{remark}[theorem]{Remark}
\newtheorem{example}[theorem]{Example}
\newtheorem{remark-question}[section]{Remark-Question}
\newcommand\C{{\mathbb C}}
\newcommand\N{{\mathbb N}}
\newcommand\Q{{\mathbb Q}}
\newcommand\R{{\mathbb R}}
\newcommand\trace{{\rm tr}}
\newcommand\SU{{\rm SU}}
\newcommand\U{{\rm U}}
\newcommand\frg{{\mathfrak g}}
\newcommand\frh{{\mathfrak h}}
\newcommand\frk{{\mathfrak k}}
\newcommand\frz{{\mathfrak z}}
\newcommand{\alt}{\raise1pt\hbox{$\bigwedge$}}
\def\pint{\langle \cdotp,\cdotp \rangle }
\newcommand{\Ric}{\operatorname{Ric}}
\newcommand{\tr}{\operatorname{tr}}
\newcommand{\ad}{\operatorname{ad}}
\begin{document}
	
\begin{abstract}
		The holonomy of the Bismut connection on Vaisman manifolds is studied. We prove that if $M^{2n}$ is endowed with a Vaisman structure, then the holonomy group of the Bismut connection is contained in U$(n-1)$. We compute explicitly this group for particular types of manifolds, namely, solvmanifolds and some classical Hopf manifolds.
\end{abstract}
	
\title{Bismut connection on Vaisman manifolds}
	
\date{\today}
	
\author[A. Andrada]{Adri\'an Andrada}
\address[Andrada]{FaMAF-CIEM, Universidad Nacional de C\'{o}rdoba, Ciudad Universitaria, X5000HUA C\'{o}rdoba, Argentina} \email{andrada@famaf.unc.edu.ar}
	
\author[R. Villacampa]{Raquel Villacampa}
\address[Villacampa]{Centro Universitario de la Defensa Zaragoza-I.U.M.A.,	Academia Gene\-ral Militar\\
		Carretera de Huesca, s/n	50090 Zaragoza, Spain} 
\email{raquelvg@unizar.es}
	
\maketitle
	
\tableofcontents

\section{Introduction}

A Hermitian connection on a Hermitian manifold $(M,J,g)$ is a connection which leaves both $J$ and $g$ parallel. Each Hermitian manifold admits plenty of these connections. Among them, there is only one whose torsion is totally skew-symmetric. This unique connection is called the Bismut connection associated to $(J,g)$, and it is also known as the Strominger connection or the KT connection (for K\"ahler with torsion). In this article we will denote it by $\nabla^b$.

As with any connection, it is important to determine which are the Hermitian manifolds whose corresponding Bismut connection is flat. Well-known examples of such manifolds are given by Lie groups equipped with a bi-invariant Riemannian metric and a compatible left invariant complex structure (see \cite{IP}). In particular, this family contains the Hermitian manifolds $(G,J,g)$ where $G$ is a compact Lie group, $J$ is one of the left invariant complex structures constructed by Samelson in \cite{Sam} and $g$ is a bi-invariant metric. More recently, it was proved in \cite{WYZ} that every compact Bismut-flat Hermitian manifold is closely related to these examples: indeed, if $M$ is a compact Hermitian manifold with flat Bismut connection, then its universal cover is a Lie group $G'$ equipped with a bi-invariant metric and a left invariant complex structure compatible with the metric. In particular, $G'$ is the product of a compact semisimple Lie group and a real vector space.

Since the flat case is already settled, it is interesting to analyze other Hermitian manifolds whose associated Bismut connection have special curvature properties.  One way is to consider the notion of \emph{K\"ahler-like}. In \cite{AOUV}, the authors conjectured that if the Bismut connection is K\"ahler-like, then the metric is pluriclosed (i.e., $\partial\overline{\partial} \omega=0$, where $\omega$ denotes the fundamental $2$-form $\omega=g(J\cdot,\cdot)$).  This conjecture has been recently proved in \cite{ZZ}. Another way,  and this is the aim of the paper, is to study the holonomy group $\operatorname{Hol}^b$ of the Bismut connection $\nabla^b$. Since both the complex structure and the Hermitian metric are $\nabla^b$-parallel we have that $\operatorname{Hol}^b\subseteq \operatorname{U}(n)$, where $2n$ is the real dimension of the manifold. In particular, $2n$-dimensional Hermitian manifolds whose Bismut holonomy is contained in $\operatorname{SU}(n)$ have attracted plenty of attention. These manifolds are known as \textit{Calabi-Yau with torsion}, and they appear in heterotic string theory, related to the Strominger system in six dimensions. It has been shown that this reduction to $\operatorname{SU}(n)$ is related in certain cases to the Hermitian metric being \textit{balanced}, that is, when the fundamental $2$-form $\omega$ satisfies $d\omega^{n-1}=0$ or, equivalently, $d^\ast\omega=0$.
For instance, it was shown in \cite{St,LY} that if the compact Hermitian manifold $(M^{2n},J,g)$ has holomorphically trivial canonical bundle, then $\operatorname{Hol}^b\subseteq \operatorname{SU}(n)$ if and only if $g$ is conformally balanced; in particular, $(M,J)$ admits a balanced metric. In the case when $M^{2n}$ is a nilmanifold, that is, $M=\Gamma\backslash G$ where $G$ is a nilpotent Lie group and $\Gamma$ is a co-compact discrete subgroup of $G$, more can be said, since it was proved in \cite{FPS} that an invariant Hermitian structure $(J,g)$ on $M$ satisfies $\operatorname{Hol}^b\subseteq \operatorname{SU}(n)$ if and only if $g$ is balanced. 

In the Gray-Hervella classification of almost Hermitian structures, balanced metrics fall into the class $\mathcal{W}_3$. In this article we are interested in Hermitian manifolds which belong to the class $\mathcal{W}_4$, namely \textit{locally conformally K\"ahler} manifolds (or LCK for short). As the name suggests, these Hermitian manifolds are characterized by the property that each point has a  neighbourhood where the metric is conformal to a K\"ahler metric. This condition is equivalent to the existence of a closed $1$-form $\theta$ satisfying $d\omega=\theta\wedge \omega$. The $1$-form $\theta$ is known as the Lee form, and it is given by $\theta=-\frac{1}{n-1} d^\ast\omega\circ J$, where $2n$ is the real dimension of the manifold. A distinguished class of LCK manifolds is given by those where the Lee form is parallel with respect to the Levi-Civita connection of the Hermitian metric. These manifolds were first studied by I. Vaisman in the late '70s (see for instance \cite{V}) and, accordingly, they are nowadays known as \textit{Vaisman} manifolds. Not all LCK manifolds are Vaisman, for instance, the Oeljeklaus-Toma manifolds of type $(s,1)$ are compact complex manifolds which admit LCK metrics but do not admit any Vaisman metric (see \cite{OT,K}).  

Our main goal is to study the Bismut holonomy of Vaisman manifolds and exhibit explicit examples where this holonomy can be computed. We point out that the Riemannian holonomy of compact Vaisman manifolds has been analyzed in \cite{MMP}. Examples of Vaisman manifolds are  given by the classical Hopf manifolds, that is, quotients of $\C^{n}-\{0\}$ by a group of automorphisms generated by $z\to \lambda z$, where $\lambda$ is a complex number with $|\lambda|>1$. These manifolds are all diffeomorphic to $S^1\times S^{2n-1}$ and do not admit any K\"ahler structure.

Another family of examples of compact Vaisman manifolds was introduced in \cite{CFL} in 1986. They are defined as compact quotients of the nilpotent Lie groups $H_{2n+1}\times\R$ by a discrete subgroup $\Gamma$, where $H_{2n+1}$ denotes the $(2n+1)$-dimensional Heisenberg Lie group; they are thus examples of \textit{nilmanifolds}. In these examples the Vaisman structures are left invariant, and recently Bazzoni proved in \cite{Ba} that if a nilmanifold $\Gamma\backslash N$ admits a Vaisman structure (invariant or not) then $N$ is isomorphic to $H_{2n+1}\times\R$. Examples of Vaisman structures on solvmanifolds (i.e., compact quotients of a simply connected solvable Lie group by a discrete subgroup) first appeared in \cite{MP} in 1997. More recently, there have been advances on the structure of the Lie algebras associated to solvmanifolds equipped with invariant Vaisman structures, see for instance \cite{AHK,AO}. In particular, the description given in \cite{AO} will be very useful for us in order to analyze the Bismut connection on these Vaisman solvmanifolds and compute its holonomy.

The paper is structured as follows: In Section 2 we start collecting some known results about Gauduchon connections, holonomy and the Ambrose-Singer theorem as a tool for determining the holonomy.  In Section 3 we study the Bismut connection on Vaisman manifolds and its curvature. The main results are Corollary~\ref{hol} and Corollary~\ref{parallel torsion} where we prove that the holonomy of the Bismut connection on Vaiman manifolds of real dimension $2n$ reduces to $\operatorname{U}(n-1)$ and that the Bismut torsion 3-form is $\nabla^b$-parallel. Section 4 is devoted to solvmanifolds endowed with an invariant Vaisman structure. In this setting, we prove that the holonomy of the Bismut connection has dimension 1 and it is not contained in $\operatorname{SU}(n)$. Some classical Hopf manifolds are studied in Section 5. Using a global parallelization of these manifolds which is compatible with the Vaisman structure, we determine explicitly the holonomy group of the Bismut connection, obtaining the group $\operatorname{U}(n-1)$. Non-Vaisman LCK Oeljeklaus-Toma manifolds are considered in Section 6 and we show that there is no reduction of the Bismut holonomy in this case.  Finally, we study the parallelism of the Lee form $\theta$ of a Vaisman manifold for the line of Gauduchon connections and more generally, for the $2$-parameter family of metric connections introduced in \cite{OUV}.

All manifolds considered in this paper have real dimension $\geq 4$.

\ 

\section{Preliminaries on holonomy and the Ambrose-Singer theorem}

We collect here some well-known facts on holonomy groups and the Ambrose-Singer theorem that will be useful in subsequent sections.

\medskip

Let $\nabla$ denote any linear connection on a connected manifold $M$ and let us fix a point $p\in M$. If $\gamma:[0,1]\to M$ is a piecewise smooth loop based at $p$, the connection $\nabla$ gives rise to a parallel transport map $P_\gamma:T_pM \to T_pM$, which is linear and invertible. The holonomy group  of $\nabla$ based at $p\in M$ is defined as 
\[ \operatorname{Hol}_p(\nabla)=\{ P_\gamma\in \operatorname{GL}(T_pM) \mid \gamma \text{ is a loop based at } p\}.\] 
It turns out that $\operatorname{Hol}_p(\nabla)$ is a Lie subgroup of $\operatorname{GL}(T_pM)$. 
Since $M$ is connected, the holonomy groups based at two different points are conjugated, and therefore we can speak of the holonomy group of $\nabla$, denoted simply by $\operatorname{Hol}(\nabla)$. If $\dim M=n$, we can identify $\operatorname{Hol}(\nabla)$ with a Lie subgroup of $\operatorname{GL}(n,\R)$, after some choice of basis.
The holonomy group need not be connected, and its identity component is denoted by $\operatorname{Hol}_0(\nabla)$; it is known as the restricted holonomy group of $\nabla$ and it consists of the parallel transport maps $P_\gamma$ where $\gamma$ is null-homotopic. Clearly, if $M$ is also simply connected then $\operatorname{Hol}(\nabla)=\operatorname{Hol}_0(\nabla)$.

We point out that if $\nabla$ is a metric connection on a Riemannian manifold $(M,g)$, i.e. $\nabla g=0$, then $P_\gamma$ is an isometry of $(T_pM,g_p)$, while if $\nabla$ satisfies $\nabla J=0$ on an almost complex manifold $(M,J)$ then $P_\gamma J=JP_\gamma$. Therefore, if $\nabla$ is a Hermitian connection ($\nabla g=\nabla J=0$) on an almost Hermitian manifold $(M^{2n},J,g)$ then 
\[ \operatorname{Hol}(\nabla)\subseteq \operatorname{O}(n)\cap \operatorname{GL}(n,\C)=\operatorname{U}(n).\]


	
In \cite{Ga} a monoparametric family $\{\nabla^t\}_{t\in\R}$ of Hermitian connections on any Hermitian manifold $(M,g,J)$ was introduced. These are known as the \textit{Gauduchon} connections (or \textit{canonical} connections), and they can be written as
\[ g(\nabla^t_XY,Z)=g(\nabla^g_XY,Z)+\frac{t-1}{4}(d^c\omega)(X,Y,Z)+\frac{t+1}{4}(d^c\omega)(X,JY,JZ), \quad X,Y,Z\in\mathfrak{X}(M), \]
where $\omega$ is the fundamental 2-form  $\omega(X,Y)=g(JX,Y)$ and $d^c:\Omega^r(M)\to\Omega^{r+1}(M)$ is the operator defined by
$d^c=(-1)^rJdJ$. More explicitly, for $\alpha\in\Omega^2(M)$ we have that $d^c\alpha(U,V,W)=-d\alpha(JU,JV,JW)$ for any $U,V,W\in\mathfrak{X}(M)$, so that the expression for $\nabla^t$ becomes
	\begin{equation}\label{canonical} 
	g(\nabla^t_XY,Z)=g(\nabla^g_XY,Z)-\frac{t-1}{4}d\omega(JX,JY,JZ)-\frac{t+1}{4}d\omega(JX,Y,Z). 
	\end{equation}
When $(M,J,g)$ is K\"ahler this family of connections reduces to a single point, given by the Levi-Civita connection. However, in general, the torsion $T^t$ of these connections is non-zero, where $T^t$ is the $(1,2)$-tensor defined by $T^t(X,Y)=\nabla^t_XY-\nabla^t_YX-[X,Y]$ for $X,Y$ vector fields on $M$. 

For particular values of $t\in\R$ we obtain well-known Hermitian connections. For instance, for $t=1$ we have the \textit{Chern connection}, while for $t=0$ we have the \textit{first canonical connection}.

\medskip

In this article we will focus on the \textit{Bismut connection}, which is the connection $\nabla^{-1}$ obtained for $t=-1$. From now on the Bismut connection will be denoted by $\nabla^b$, with corresponding torsion $T^b$. It was introduced in \cite{Bis} and it can be defined as the unique Hermitian connection whose torsion $T^b$ is totally skew-symmetric, i.e. $c(X,Y,Z):=g(X,T^b(Y,Z))$ is a $3$-form on $M$. It follows from \eqref{canonical} that its expression is given by 
\begin{equation} \label{bismut}
	g(\nabla^b_XY,Z)=g(\nabla^g_XY,Z)+\frac12 d\omega(JX,JY,JZ),
\end{equation}
and its torsion $3$-form $c$ is:
\begin{equation} \label{3-form}
		c(X,Y,Z)=d\omega(JX,JY,JZ), 
	\end{equation}
for any $X,Y,Z\in\mathfrak{X}(M)$.

\medskip 

As notation, we will use $\operatorname{Hol}^b(M)$ to refer to the holonomy of the Bismut connection on the Hermitian manifold $M$.

\ 

The Ambrose-Singer theorem provides a way to compute the holonomy group of a linear connection; indeed, it describes the Lie algebra $\mathfrak{hol}_p(\nabla)$ of $\operatorname{Hol}_p(\nabla)$ in terms of curvature endomorphisms $R_p(x,y)$ for $x,y\in T_pM$:

\begin{theorem}\label{AS1}\cite{AS}
The holonomy algebra $\mathfrak{hol}_p(\nabla)$ is the smallest subalgebra of $\mathfrak{gl}(T_pM)$ containing the endomorphims $P_\sigma^{-1}\circ R_p(x,y)\circ P_\sigma$, where $x,y$ run through $T_pM$, $\sigma$ runs through all piecewise smooth paths starting from $p$ and $P_\sigma$ denotes the parallel transport map along $\sigma$.
\end{theorem}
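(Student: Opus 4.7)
The plan is to prove the two inclusions separately. Write $\mathfrak{h}_p$ for the smallest Lie subalgebra of $\mathfrak{gl}(T_pM)$ containing the endomorphisms described in the statement; the goal is to identify $\mathfrak{h}_p$ with $\mathfrak{hol}_p(\nabla)$. I interpret the statement as $P_\sigma^{-1}\circ R_{\sigma(1)}(P_\sigma x, P_\sigma y)\circ P_\sigma$, so that when $\sigma$ is constant one recovers the curvature operators $R_p(x,y)$, and in general one conjugates back to $T_pM$ the curvature endomorphisms at the endpoint of $\sigma$.

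For the inclusion $\mathfrak{h}_p \subseteq \mathfrak{hol}_p(\nabla)$, I would use the classical identification of curvature with infinitesimal holonomy. Given a path $\sigma$ from $p$ to $q=\sigma(1)$ and $x,y\in T_pM$, choose a normal chart at $q$ and consider the loop $\gamma_{s,t}$ at $q$ bounding the coordinate parallelogram with sides $s P_\sigma x$ and $t P_\sigma y$. A standard Taylor expansion in the normal chart yields
\[ P_{\gamma_{s,t}} = \operatorname{Id}_{T_qM} - st\, R_q(P_\sigma x, P_\sigma y) + o(st). \]
Conjugating gives a one-parameter family of loops $\sigma^{-1}\cdot\gamma_{s,t}\cdot\sigma$ at $p$ whose parallel transports all lie in $\operatorname{Hol}_p(\nabla)$. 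Since $\operatorname{Hol}_p(\nabla)$ is a Lie subgroup of $\operatorname{GL}(T_pM)$, the mixed partial derivative at $s=t=0$, namely $-P_\sigma^{-1} R_q(P_\sigma x, P_\sigma y) P_\sigma$, belongs to the Lie algebra $\mathfrak{hol}_p(\nabla)$; closing under brackets gives $\mathfrak{h}_p\subseteq\mathfrak{hol}_p(\nabla)$.

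The reverse inclusion is the substantive step, and here I would work on the frame bundle $L(M)\to M$. Fix a frame $u_0$ at $p$ and let $P\subseteq L(M)$ be the holonomy subbundle, i.e.\ the set of frames reachable from $u_0$ by $\nabla$-parallel transport; by the Reduction Theorem $P$ is a principal $\operatorname{Hol}_p(\nabla)$-bundle over $M$, and the connection on $L(M)$ restricts to a connection $\omega$ on $P$ with values in $\mathfrak{hol}_p(\nabla)$. Identifying $\mathfrak{h}_p$ via $u_0$ with a subspace of $\mathfrak{hol}_p(\nabla)$, define on $P$ the distribution
\[ \mathcal{D}_v = H_v \oplus \{A^\sharp_v : A\in\mathfrak{h}_p\}, \]
where $H_v$ is the horizontal subspace at $v$ and $A^\sharp$ is the fundamental vector field of $A$. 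Using the structure equation $\Omega = d\omega+\tfrac{1}{2}[\omega,\omega]$, the bracket of two horizontal lifts has vertical component $-\Omega(\cdot,\cdot)$, which by construction takes values in $\mathfrak{h}_p$; brackets of two fundamental vector fields stay in $\mathfrak{h}_p^\sharp$ since $\mathfrak{h}_p$ is a Lie subalgebra; and mixed brackets are horizontal by $G$-equivariance of the horizontal distribution. Hence $\mathcal{D}$ is involutive, and Frobenius produces a maximal integral leaf $N$ through $u_0$ of dimension $\dim M + \dim \mathfrak{h}_p$. Since $N$ contains every horizontal lift starting at $u_0$, it sweeps out all of the connected bundle $P$, forcing $\dim P\leq \dim N$, i.e.\ $\dim\operatorname{Hol}_p(\nabla)\leq \dim\mathfrak{h}_p$, which together with the first inclusion gives the desired equality.

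The hard part is this second half. The delicate point is to verify that $\Omega$ on $P$, evaluated at an arbitrary $v\in P$, takes values precisely in the conjugated operators generating $\mathfrak{h}_p$ after the identification via the frame $v$; this is exactly what guarantees that the vertical projection of horizontal brackets lands in $\mathcal{D}$, which is the crucial input for involutivity. Once this identification is in place, the Frobenius step and the dimension comparison are routine, and the first direction reduces to the one-line derivative computation sketched above.
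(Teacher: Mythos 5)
The paper offers no proof of this statement to compare against: it is the classical Ambrose--Singer theorem, quoted with a citation to \cite{AS}. Your argument is the standard frame-bundle proof (essentially the one in Kobayashi--Nomizu), and it is correct in outline: curvature as the infinitesimal holonomy of small coordinate parallelograms gives $\mathfrak{h}_p\subseteq\mathfrak{hol}_p(\nabla)$, and the reduction theorem plus a Frobenius argument on the holonomy bundle gives the reverse dimension bound. You also resolve the notational ambiguity in the statement correctly, reading the generators as $P_\sigma^{-1}\circ R_{\sigma(1)}(P_\sigma x,P_\sigma y)\circ P_\sigma$. Two points deserve a word of care. First, in the easy inclusion, passing from the expansion $P_{\gamma_{s,t}}=\operatorname{Id}-st\,R_q(P_\sigma x,P_\sigma y)+o(st)$ to membership of the mixed derivative in $\mathfrak{hol}_p(\nabla)$ uses that a smooth curve in $\operatorname{GL}(T_pM)$ with values in the (possibly non-embedded) Lie subgroup $\operatorname{Hol}_p(\nabla)$ has derivative at the identity in its Lie algebra; this is true because connected Lie subgroups are initial submanifolds, but it should be said. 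Alternatively, this inclusion comes for free from the reduction theorem you already invoke for the other half: the connection form on the holonomy bundle is $\mathfrak{hol}$-valued, hence so is $\Omega=d\omega+\tfrac12[\omega,\omega]$, and $\Omega_v$ read through the frame $v$ obtained by transporting $u_0$ along $\sigma$ is exactly the conjugated curvature operator read through $u_0$. Second, that same identification is the ``delicate point'' you flag in the Frobenius half, and your description of it is the right one; together with the vanishing of the mixed brackets $[A^\sharp,\widetilde X]$ by right-invariance of the horizontal distribution, and the fact that a maximal integral leaf is an initial submanifold (so that the set inclusion $P\subseteq N$ really yields $\dim P\le\dim N$), the involutivity and the dimension count close the argument. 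None of these caveats is a genuine gap; they are the standard technicalities of the classical proof.
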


In particular, $\mathfrak{hol}_p(\nabla)$ contains all the curvature endomorphisms $R_p(x,y),\, x,y\in T_pM$. This fact will be used in Section \ref{Hopf}.

\medskip

Let us consider the particular case when $M=G$ is a Lie group with Lie algebra $\frg$. A linear connection $\nabla$ on $G$ is said to be left invariant if the left translations on $G$ are affine maps. As a consequence, if $X,Y$ are left invariant vector fields then $\nabla_XY$ is also left invariant. Therefore $\nabla$ is uniquely determined by a bilinear multiplication $\frg \times \frg \to \frg$, still denoted by $\nabla$. We also denote by $\nabla_x:\frg\to\frg$ the endomorphism defined by left multiplication with $x\in\frg$. In this case the Ambrose-Singer theorem takes the following form:

\begin{theorem}\label{AS2}\cite{Alek}
Let $\nabla$ be a left invariant linear connection on the Lie group $G$, and let $\frg$ denote the Lie algebra of $G$. Then the holonomy algebra $\mathfrak{hol}(\nabla)$, based at the identity element $e\in G$, is the smallest subalgebra of $\mathfrak{gl}(\frg)$ containing the curvature endomorphisms $R(x,y)$ for any $x,y\in\frg$, and closed under commutators with the left multiplication operators $\nabla_x:\frg\to \frg$.
\end{theorem}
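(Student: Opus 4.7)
My plan is to deduce Theorem \ref{AS2} from the classical Ambrose-Singer Theorem \ref{AS1} applied at the identity $e\in G$, by systematically using left invariance to reduce every quantity to data on the Lie algebra $\frg$. Trivializing the tangent bundle by the left-invariant frame, the curvature $R$ becomes a left-invariant $\mathrm{End}(\frg)$-valued $2$-form on $G$, so $R(x,y)\in\mathfrak{gl}(\frg)$ is well-defined for any $x,y\in\frg$; similarly the bilinear product $\nabla\colon \frg\times\frg\to\frg$ furnishes, for each $x\in\frg$, a left-multiplication operator $\nabla_x\in\mathfrak{gl}(\frg)$.

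The first reduction is to replace the conjugations $P_\sigma^{-1}\circ R\circ P_\sigma$ appearing in Theorem \ref{AS1} by iterated covariant derivatives of $R$ at $e$. This is the content of Nomizu's refinement of Ambrose-Singer: for a real-analytic connection, $\mathfrak{hol}_e(\nabla)$ is the linear span of $R(x,y)$ together with all $(\nabla^k R)(x,y;z_1,\dots,z_k)|_e$ for $k\geq 1$ and $x,y,z_i\in\frg$. Since a left-invariant connection on a Lie group is analytic, the refinement applies, and it suffices to identify this span with the subalgebra $\mathfrak{h}\subseteq\mathfrak{gl}(\frg)$ described in the statement.

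The key computational input is the identity
\[
(\nabla_z R)(x,y) \;=\; \bigl[\nabla_z,\, R(x,y)\bigr] \;-\; R(\nabla_z x,\, y) \;-\; R(x,\, \nabla_z y),
\]
holding as endomorphisms of $\frg$, which I would obtain by extending $x,y,z$ to left-invariant vector fields, applying the tensorial Leibniz rule $(\nabla_Z R)(X,Y)W = \nabla_Z(R(X,Y)W) - R(\nabla_Z X,Y)W - R(X,\nabla_Z Y)W - R(X,Y)\nabla_Z W$, and evaluating at $e$. Iterating this identity, an induction on $k$ shows that every $(\nabla^k R)|_e$ lies in $\mathfrak{h}$, so Nomizu's span sits inside $\mathfrak{h}$. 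The reverse inclusion is immediate: $R(x,y)\in\mathfrak{hol}_e(\nabla)$ by Theorem \ref{AS1}, and rearranging the displayed identity gives $[\nabla_z, R(x,y)] = (\nabla_z R)(x,y) + R(\nabla_z x, y) + R(x, \nabla_z y)$, whose terms are all already in the Nomizu span, so $\mathfrak{hol}_e(\nabla)$ is itself closed under commutators with every $\nabla_z$.

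The step I expect to be the main obstacle is the inductive bookkeeping: iterating the formula for $\nabla_z R$ produces a proliferation of correction terms of the form $R(\xi,\eta)$ where $\xi,\eta$ arise from repeated applications of the $\nabla_{z_j}$, and one must verify that each such $\xi,\eta$ still lies in $\frg$ so that the term is a genuine curvature endomorphism and hence lies in $\mathfrak{h}$ by definition. This is automatic from the fact that $\nabla_z$ preserves $\frg$, but the combinatorics must be organized carefully. An alternative route that avoids invoking Nomizu as a black box would be to integrate the left-invariant parallel-transport ODE $\dot P_\sigma + \nabla_{\sigma(t)^{-1}\dot\sigma(t)}\circ P_\sigma = 0$ as a time-ordered exponential, so that $P_\sigma^{-1}\circ R(x,y)\circ P_\sigma$ expands as an explicit series of iterated commutators of $R(x,y)$ with the operators $\nabla_z$, making the Ambrose-Singer description of $\mathfrak{hol}(\nabla)$ in Theorem \ref{AS1} yield precisely $\mathfrak{h}$.
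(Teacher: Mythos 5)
The paper does not actually prove Theorem~\ref{AS2}: it is quoted from Alekseevski's paper, so there is no internal argument to compare against. Your derivation is a legitimate independent proof and is essentially sound. The route you take --- Nomizu's analytic refinement of Ambrose--Singer (valid here because a left invariant connection on a Lie group is real-analytic in the canonical analytic structure), combined with the operator identity $(\nabla_z R)(x,y)=[\nabla_z,R(x,y)]-R(\nabla_z x,y)-R(x,\nabla_z y)$ --- is different from the classical one, which treats this as the special case $H=\{e\}$ of the Nomizu--Wang description of the holonomy of an invariant connection on a reductive homogeneous space and works with the holonomy subbundle of the frame bundle rather than with covariant derivatives of curvature. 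Your approach buys concreteness (everything happens in $\mathfrak{gl}(\frg)$ and the only input beyond Theorem~\ref{AS1} is the Leibniz rule), at the price of invoking analyticity, which the bundle-theoretic proof does not need. One point in your write-up deserves tightening: in the reverse inclusion you conclude that $\mathfrak{hol}_e(\nabla)$ is closed under $[\nabla_z,\cdot]$ after checking this only on the generators $R(x,y)$; since $\mathfrak{hol}_e(\nabla)$ is the \emph{span} of all $(\nabla^k R)|_e$, closure must be verified on every spanning element, i.e.\ you need the same Leibniz identity at order $k$, namely that $[\nabla_z,(\nabla^kR)(x,y;z_1,\dots,z_k)]$ equals $(\nabla^{k+1}R)(x,y;z_1,\dots,z_k,z)$ plus terms in which $\nabla_z$ has been applied to one of the arguments --- all of which lie again in the span. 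This is the same computation you already use in the forward induction, so it is a one-line addition rather than a gap; you should also state explicitly that $G$ is connected, which is needed both for Nomizu's theorem and for $\mathfrak{hol}$ to be basepoint-independent.
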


This version of the Ambrose-Singer theorem will be used in Sections \ref{solvmanifold} and \ref{section-OT}.

\

\section{Curvature of the Bismut connection on Vaisman manifolds}


Let $(J,g)$ be a Hermitian structure on a connected manifold $M$ with fundamental $2$-form~$\omega$. This structure is called \textit{locally conformally K\"ahler} (LCK for short) if there exists an open covering $\{ U_i\}_{i\in I}$ of $M$ and differentiable functions $f_i:U_i \to \R$, $i\in I$, such that each local
metric $g_i=\exp(-f_i)\,g|_{U_i}$ is K\"ahler. 
Equivalently, $(J,g)$ is LCK if there exists 
a closed $1$-form $\theta$ such that the differential of $\omega$ is given by 
\begin{equation}\label{dif} 
    d\omega=\theta\wedge \omega.
\end{equation} 
The $1$-form $\theta$ is known as the Lee form. We denote by $A\in\mathfrak{X}(M)$ the vector field which is metric dual to $\theta$, i.e., $g(A,U)=\theta(U)$ for all $U\in\mathfrak{X}(M)$. If, moreover, $\nabla^g\theta=0$, the Hermitian structure $(J,g)$ is called \textit{Vaisman}.

\ 

\begin{remark}
{\rm (i) The Lee form is uniquely determined by
\begin{equation} \label{d-theta}
\theta=-\frac{1}{n-1}(d^\ast\omega)\circ J,
\end{equation}
where $d^\ast$ is the codifferential and $2n$ is the real dimension of $M$. 

\smallskip (ii) If the Lee form $\theta$ is exact, i.e. $\theta=df$ with $f\in C^\infty(M)$, then $\exp(-f)g$ is a K\"ahler metric on $M$. Therefore any simply connected LCK manifold admits a global K\"ahler metric; consequently, ``genuine" LCK metrics occur on non-simply connected manifolds.

\smallskip (iii) The LCK structure is K\"ahler if and only if $\theta=0$. Indeed, $\theta \wedge \omega=0$ and $\omega$ non degenerate imply $\theta=0$.}
\end{remark}

\ 
	
Let us start with some results concerning the torsion of the Bismut connection on an LCK manifold. 

\medskip

\begin{lemma}\label{torsion}
Let $(M,J,g)$ be an LCK manifold with fundamental $2$-form $\omega$ and Lee form~$\theta$. Then the torsion $3$-form $c$ of the Bismut connection is given by 
	\[   c=-J\theta \wedge \omega,\]
where $J\theta$ denotes the $1$-form on $M$ defined by $J\theta(X)=-\theta(JX)$.
\end{lemma}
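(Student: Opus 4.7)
The plan is a direct computation starting from the explicit expression for the torsion 3-form already derived in the paper. From equation \eqref{3-form} we have $c(X,Y,Z) = d\omega(JX,JY,JZ)$, and the LCK condition \eqref{dif} rewrites this as $c(X,Y,Z) = (\theta\wedge\omega)(JX,JY,JZ)$. The task then reduces to simplifying the right-hand side using the $J$-invariance of the fundamental form.

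Expanding the wedge gives
\begin{equation*}
c(X,Y,Z) = \theta(JX)\,\omega(JY,JZ) - \theta(JY)\,\omega(JX,JZ) + \theta(JZ)\,\omega(JX,JY).
\end{equation*}
The next step is to invoke the identity $\omega(JU,JV) = \omega(U,V)$, which follows at once from $\omega(U,V) = g(JU,V)$ together with $J^2=-\mathrm{Id}$ and the $J$-invariance of $g$. This reduces each $\omega(J\cdot,J\cdot)$ factor to the corresponding $\omega(\cdot,\cdot)$.

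Finally, I would use the definition $J\theta(X) = -\theta(JX)$ to rewrite every occurrence of $\theta(J\cdot)$ as $-J\theta(\cdot)$. Collecting signs produces exactly the expansion of $-(J\theta \wedge \omega)(X,Y,Z)$, yielding $c = -J\theta\wedge\omega$.

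There is no real obstacle here: the statement is a direct consequence of \eqref{3-form}, \eqref{dif}, and the $J$-invariance of $\omega$. The only thing to keep careful track of is the sign bookkeeping in the convention $J\theta(X) = -\theta(JX)$, which is responsible for the minus sign in the final formula.
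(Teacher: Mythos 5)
Your proof is correct and follows essentially the same route as the paper: expand $(\theta\wedge\omega)(JX,JY,JZ)$, use the $J$-invariance $\omega(JU,JV)=\omega(U,V)$, and convert $\theta(J\cdot)$ into $-J\theta(\cdot)$. The sign bookkeeping works out exactly as you describe.
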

	
\begin{proof}
Recall that the torsion $3$-form $c$ is given by $c(X,Y,Z)=d\omega(JX,JY,JZ)$. It follows from \eqref{dif} that
		\begin{align*}
			c(X,Y,Z) & = \theta\wedge \omega(JX,JY,JZ) \\
			& = \theta(JX)\omega(JY,JZ)+\theta(JY)\omega(JZ,JX)+\theta(JZ)\omega(JX,JY) \\
			& = -J\theta(X)\omega(Y,Z)-J\theta(Y)\omega(Z,X)-J\theta(Z)\omega(X,Y) \\
			& = -J\theta\wedge \omega(X,Y,Z),
		\end{align*}
for any vector fields $X,Y,Z$ on $M$.
\end{proof}
		
\begin{corollary}\label{torsion-A}
The torsion $3$-form $c$ of the Bismut connection satisfies $c(A,\cdot,\cdot) = 0$. 
\end{corollary}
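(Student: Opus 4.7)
The plan is to invoke the formula $c=-J\theta\wedge\omega$ provided by Lemma~\ref{torsion} and then just evaluate both sides on the triple $(A,Y,Z)$, using the cyclic expansion of a wedge $\alpha\wedge\beta$ of a $1$-form with a $2$-form. Concretely, one gets
\[
c(A,Y,Z)=-\bigl[J\theta(A)\,\omega(Y,Z)+J\theta(Y)\,\omega(Z,A)+J\theta(Z)\,\omega(A,Y)\bigr].
\]

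The first summand vanishes for a general reason: by definition $J\theta(A)=-\theta(JA)=-g(A,JA)$, and because $J$ is a $g$-orthogonal almost complex structure one has $g(A,JA)=g(JA,J^{2}A)=-g(JA,A)$, so $g(A,JA)=0$. The remaining two summands can be rewritten in terms of $J\theta$ itself: indeed $\omega(Z,A)=g(JZ,A)=\theta(JZ)=-J\theta(Z)$ and, symmetrically, $\omega(A,Y)=g(JA,Y)=-g(A,JY)=-\theta(JY)=J\theta(Y)$. Plugging these in,
\[
c(A,Y,Z)=-\bigl[-J\theta(Y)\,J\theta(Z)+J\theta(Z)\,J\theta(Y)\bigr]=0.
\]

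There is no genuine obstacle here: the statement is a one-line consequence of Lemma~\ref{torsion} together with the observation that $A$ is $g$-orthogonal to $JA$, combined with the identity $\omega(\,\cdot\,,A)=-J\theta$. The only thing to be careful about is the bookkeeping of signs and the sign convention $J\theta(X)=-\theta(JX)$ used in the lemma; once these are set up correctly the two non-trivial terms cancel antisymmetrically.
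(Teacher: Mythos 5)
Your proposal is correct and follows essentially the same route as the paper: expand $c=-J\theta\wedge\omega$ on the triple $(A,Y,Z)$, kill the first term via $g(A,JA)=0$, and observe that the remaining two terms cancel by the skew-symmetry of $J$ with respect to $g$. (In fact your bookkeeping is slightly cleaner: the paper's displayed computation drops the overall minus sign from $c=-J\theta\wedge\omega$, which is harmless here since the result is zero.)
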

	
\begin{proof} Using the previous expression for the torsion, one gets:
	\begin{align*}
		c(A,X,Y) &=	(J\theta\wedge \omega)(A,X,Y)  \\  & = -\theta(JA)\omega(X,Y) -\theta(JX)\omega(Y,A)-\theta(JY)\omega(A,X)\\
			& =-g(A,JA)g(JX,Y) -g(A,JX)g(JY,A)-g(A,JY)g(JA,X) \\
			& = 0, 
	\end{align*} since $g(A,JA)=0$ and $J$ is skew-symmetric. 
\end{proof}
	
\begin{corollary}\label{torsion-12}
The $(1,2)$-torsion tensor $T^b$ of the Bismut connection is given by
		\[  T^b(X,Y)= \theta(JX)JY-\theta(JY)JX-\omega(X,Y)JA, \]
for any vector fields $X,Y$ on $M$.
\end{corollary}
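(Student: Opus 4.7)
The plan is to recover the $(1,2)$-torsion tensor $T^b$ from the torsion $3$-form $c$ that was already identified in Lemma~\ref{torsion}. By the very definition $c(X,Y,Z)=g(X,T^b(Y,Z))$ together with the total skew-symmetry of $c$, the cyclic permutation gives
\[ g(Z,T^b(X,Y)) = c(Z,X,Y) = c(X,Y,Z) = -(J\theta\wedge\omega)(X,Y,Z). \]
Expanding the right-hand side as a sum of three terms, it will then suffice to rewrite each of them in the form $g(Z,\,\cdot\,)$ and invoke non-degeneracy of $g$.

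The key algebraic identities I would use for this rewriting are $J\theta(W)=-\theta(JW)$ (the definition of $J\theta$), $\omega(U,V)=g(JU,V)$ (the definition of $\omega$), and the skew-adjointness of $J$ with respect to $g$, i.e. $g(JU,V)=-g(U,JV)$. Applying these to the three summands of $(J\theta\wedge\omega)(X,Y,Z)$, the term involving $\omega(Y,Z)$ produces $\theta(JX)\,g(JY,Z)$, the term involving $\omega(Z,X)$ produces $-\theta(JY)\,g(Z,JX)$, and the term involving $\omega(X,Y)$ produces $-\omega(X,Y)\,g(JA,Z)$ after writing $\theta(JZ)=g(A,JZ)=-g(Z,JA)$. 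Collecting coefficients gives
\[ g(Z,T^b(X,Y)) = g\bigl(Z,\;\theta(JX)JY-\theta(JY)JX-\omega(X,Y)JA\bigr), \]
and non-degeneracy of $g$ yields the claimed formula.

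There is no real obstacle here: the proof is a bookkeeping exercise in signs. The only step that requires a moment of care is the passage from the $3$-form $c$ to $T^b$, where one must correctly use the cyclic invariance of $c$ (a consequence of total skew-symmetry under the even $3$-cycle) in order to place $Z$ in the first slot of $g$; after that, the three summands rearrange immediately into the three terms of the statement.
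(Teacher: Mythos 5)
Your proposal is correct and follows essentially the same route as the paper: the paper's proof is exactly the one-line observation that $c(X,Y,Z)=c(Z,X,Y)=g(Z,T^b(X,Y))$ combined with Lemma~\ref{torsion}, and your expansion of $-(J\theta\wedge\omega)(X,Y,Z)$ with the sign bookkeeping is the computation the paper leaves implicit. All three terms are handled correctly, so there is nothing to add.
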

	
\begin{proof}
Recalling that $c(X,Y,Z)=c(Z,X,Y)=g(Z,T^b(X,Y))$, this follows easily from Lemma \ref{torsion}.
\end{proof}
	
\medskip
	
As a consequence, we have that 
\begin{equation}\label{nablab-formula}
	\nabla^b_XY=\nabla^g_XY+\frac12\left( \theta(JX)JY-\theta(JY)JX-\omega(X,Y)JA \right),
\end{equation}
for any vector fields $X,Y$ on $M$.
	
\

From now on, we focus on Vaisman manifolds. Our objective is to compute the Bismut connection on Vaisman manifolds in terms of the vector fields $A,JA$ and the distribution $\mathcal D$ orthogonal to $A$ and $JA$, and then study the symmetries of the corresponding curvature tensor $R^b$. In particular, we obtain that the torsion $3$-form $c$ is always $\nabla^b$-parallel on a Vaisman manifold, and we obtain a first reduction of its holonomy.

\ 

\subsection{Results about the Bismut connection on Vaisman manifolds}

On a Vaisman manifold, the vector field $A$ $g$-dual to the Lee form $\theta$ satisfies $\nabla^g A=0$. It follows that $|A|$ is constant on $M$ and, therefore, by rescaling the metric, we may assume from now on that $|A|=1$, so that $\theta(A)=1$.

\smallskip

In the next result we collect some well-known facts about Vaisman manifolds which will be used throughout this article. 
	
\begin{proposition}\cite{V}\label{propiedades-Vaisman}
Let $(M,J,g)$ be a Vaisman manifold with associated Lee form $\theta$. Let $A$ be the vector field which is metric dual to $\theta$, with $|A|=1$. Then:
	\begin{enumerate}
		\item[(a)] $[A,JA]=0$;
		\item[(b)] both $A$ and $JA$ are Killing vector fields;
		\item[(c)] $\mathcal{L}_AJ=\mathcal{L}_{JA}J=0$, where $\mathcal{L}$ denotes the Lie derivative. That is, $[A,JX]=J[A,X]$, $[JA,JX]=J[JA,X]$ for any vector field $X$ on $M$.
	\end{enumerate}
\end{proposition}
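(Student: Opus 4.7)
The plan is to leverage the Vaisman hypothesis $\nabla^g A=0$ in two ways: first, a parallel vector field is automatically Killing, which immediately disposes of the $A$-statement in~(b); second, since $\nabla^g_X A=0$ for every $X$, torsion-freeness gives $[X,A]=-\nabla^g_A X$, reducing many commutator computations to a single covariant derivative.

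The heart of the proof is an explicit formula for $\nabla^g J$. Since $J$ is integrable, the standard almost-Hermitian identity takes the form
$$2g((\nabla^g_X J)Y,Z)=d\omega(X,Y,Z)-d\omega(X,JY,JZ).$$
Expanding both sides via $d\omega=\theta\wedge\omega$, using $\omega(JU,JV)=\omega(U,V)$, and the dualities $\theta=g(A,\cdot)$ and $J\theta=g(JA,\cdot)$, one arrives after cancellation at
$$2(\nabla^g_X J)Y=-\theta(Y)JX+\omega(X,Y)A-g(JA,Y)X+g(X,Y)JA. \qquad (\star)$$
Specialising $Y=A$ and using $g(JA,A)=0$ together with $\omega(X,A)=-g(JA,X)$ then yields
$$\nabla^g_X(JA)=\tfrac12\bigl(-JX-g(JA,X)A+\theta(X)JA\bigr). \qquad (\star\star)$$

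With $(\star)$ and $(\star\star)$ in hand, the three items reduce to routine substitution. For~(a), since $\nabla^g_{JA}A=0$, we have $[A,JA]=\nabla^g_A(JA)=(\nabla^g_A J)A$, and $(\star)$ with $X=Y=A$ collapses to $-JA+0-0+JA=0$. For the $JA$-part of~(b), pairing $(\star\star)$ against $Y$ and symmetrising in $X,Y$ leaves one pair of $\omega$-terms that cancel by antisymmetry of $\omega$ together with two pairs of scalar products $\theta(\cdot)\,g(JA,\cdot)$ that cancel by commutativity of multiplication. For~(c), $(\mathcal L_A J)X=\nabla^g_A(JX)-J\nabla^g_A X=(\nabla^g_A J)X$, which vanishes by $(\star)$ with $X=A$; and expanding $(\mathcal L_{JA}J)X=(\nabla^g_{JA}J)X-\nabla^g_{JX}(JA)+J\nabla^g_X(JA)$, the first term vanishes by $(\star)$ with $X=JA$, and direct substitution of $(\star\star)$ into the remaining two shows that the four resulting contributions cancel in pairs.

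The only delicate point I anticipate is the derivation of $(\star)$ itself: the bookkeeping of signs in the wedge-product expansion of $\theta\wedge\omega$ at $J$-twisted arguments, combined with the identifications $\theta=A^\flat$ and $J\theta=(JA)^\flat$, demands care. Once $(\star)$ is in place, each of (a)--(c) is a few lines of algebra with no further conceptual content.
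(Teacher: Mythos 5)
Your proof is correct. Note that the paper itself offers no argument for this proposition: it is quoted directly from Vaisman's article \cite{V}, so there is no in-text proof to compare against. Your route --- deriving the pointwise formula $(\star)$ for $\nabla^g J$ from the integrability identity $2g((\nabla^g_XJ)Y,Z)=d\omega(X,Y,Z)-d\omega(X,JY,JZ)$ together with $d\omega=\theta\wedge\omega$, then reading off $\nabla^g(JA)$ and checking (a)--(c) by substitution --- is a standard and valid way to establish these facts. I checked $(\star)$ independently: with the paper's convention $\omega(X,Y)=g(JX,Y)$ it is exactly
\[
(\nabla^g_XJ)Y=\tfrac12\bigl(-\theta(Y)JX+\omega(X,Y)A-g(JA,Y)X+g(X,Y)JA\bigr),
\]
and all subsequent substitutions (the collapse of $(\star)$ at $X=A$ and at $X=JA$, the symmetrisation for the Killing property, and the pairwise cancellation in $-\nabla^g_{JX}(JA)+J\nabla^g_X(JA)$) go through as you describe. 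Regarding the one ``delicate point'' you flag: a way to sidestep the sign bookkeeping entirely is to use the Weyl connection $D_XY=\nabla^g_XY-\tfrac12(\theta(X)Y+\theta(Y)X-g(X,Y)A)$, which is the common Levi-Civita connection of the local K\"ahler metrics $e^{-f_i}g$ and hence satisfies $DJ=0$; writing $(\nabla^g_XJ)Y=(D_XJ)Y+\tfrac12(\theta(JY)X-g(X,JY)A-\theta(Y)JX+g(X,Y)JA)$ gives $(\star)$ in two lines with no wedge-product expansion. It is also worth observing that your verifications of (a)--(c) are insensitive to an overall sign in $(\star)$, so the argument is robust even at the point you identify as fragile.
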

	
\medskip
	
We denote by $\mathcal D$ the distribution on $M$ such that $\mathcal{D}_p$ is the orthogonal complement of $\text{span}\{A_p,J_pA_p\}$ in $T_pM$ for any $p\in M$. Clearly, $\mathcal{D}$ is $J$-invariant and $\theta(X)=J\theta(X)=0$ for any $X\in\Gamma(\mathcal{D})$. Moreover, $\mathcal D$ is not involutive, since using $d\omega(A,X,Y)=c\wedge \omega(A,X,Y)$ and Proposition \ref{propiedades-Vaisman} it can be seen that $g(JA,[X,Y])=\omega(X,Y)$ for any $X,Y\in\Gamma(\mathcal D)$. However, we can show that   
	
\begin{corollary}\label{D}
If $X\in\Gamma(\mathcal{D})$ then $[A,X]\in\Gamma(\mathcal{D})$ and $[JA,X]\in\Gamma(\mathcal{D})$. 
\end{corollary}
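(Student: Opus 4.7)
The plan is to check membership in $\mathcal{D}$ directly from its defining condition: a vector field $Y$ lies in $\Gamma(\mathcal{D})$ if and only if $g(Y,A)=0$ and $g(Y,JA)=0$ pointwise. So for $X\in\Gamma(\mathcal{D})$ I will verify the four scalar identities $g([A,X],A)=g([A,X],JA)=g([JA,X],A)=g([JA,X],JA)=0$.

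The key tool is the Killing property of $A$ and $JA$ (Proposition \ref{propiedades-Vaisman}(b)), which translates into the Lie-derivative identity
\[
V(g(Y,Z))=g([V,Y],Z)+g(Y,[V,Z])
\]
for $V\in\{A,JA\}$ and arbitrary vector fields $Y,Z$. Setting $V=A$, $Y=X$, and $Z=A$, the left-hand side vanishes because $g(X,A)=0$ on all of $M$ (so its derivative in any direction is zero), while $[A,A]=0$ on the right, producing $g([A,X],A)=0$. Setting instead $Z=JA$, the left-hand side again vanishes since $g(X,JA)=0$; on the right the term $g(X,[A,JA])$ drops out thanks to Proposition \ref{propiedades-Vaisman}(a), leaving $g([A,X],JA)=0$. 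This shows $[A,X]\in\Gamma(\mathcal{D})$.

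The same argument with $V=JA$ handles $[JA,X]$: one uses $JA(g(X,A))=0$ together with $[JA,A]=0$ to get $g([JA,X],A)=0$, and $JA(g(X,JA))=0$ together with $[JA,JA]=0$ to get $g([JA,X],JA)=0$. The only nontrivial ingredients are the Killing property of both $A$ and $JA$ and their commutativity, both of which are already recorded in Proposition \ref{propiedades-Vaisman}, so there is no real obstacle; the argument is essentially a bookkeeping exercise applying $\mathcal{L}_V g=0$. (Alternatively, since $\nabla^g A=0$ one could write $[A,X]=\nabla^g_A X$ and argue that parallel transport by $A$ preserves the orthogonal splitting $TM=\operatorname{span}\{A,JA\}\oplus\mathcal{D}$, but the Killing-field computation is the shortest route.)
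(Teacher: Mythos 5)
Your proof is correct. It differs from the paper's argument in which parts of Proposition \ref{propiedades-Vaisman} it leans on: you use the Killing property of $A$ and $JA$ (part (b)) together with $[A,JA]=0$ (part (a)), expanding $V(g(X,Z))=g([V,X],Z)+g(X,[V,Z])$ for $V\in\{A,JA\}$ and $Z\in\{A,JA\}$, whereas the paper starts from $d\theta=0$, which directly yields $0=d\theta(A,X)=-g(A,[A,X])$ (using that $\theta(X)=0$ and $\theta(A)=1$ is constant), and then obtains the second orthogonality $g(JA,[A,X])=0$ by substituting $JX$ for $X$ and invoking $[A,JX]=J[A,X]$ from part (c). The two routes are of comparable length; the paper's has the mild advantage that the identity $g(A,[A,X])=0$ comes purely from the closedness of the Lee form and the constancy of $|A|$, with the genuinely Vaisman input confined to the holomorphicity of $A$, while yours treats all four orthogonality conditions uniformly through the single Lie-derivative identity $\mathcal{L}_Vg=0$, which makes the bookkeeping arguably cleaner. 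Both are complete proofs; your parenthetical alternative via $[A,X]=\nabla^g_AX$ is also sound but, as you note, not needed.
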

	
\begin{proof}
Since $d\theta=0$,
	\begin{align*}
			0=d\theta(A,X) & = A(\theta(X))-X(\theta(A))-\theta([A,X])\\
			& =Ag(A,X)-Xg(A,A)-g(A,[A,X])\\
			& = -g(A,[A,X]).
	\end{align*}		
If in this expression we replace $X\in\Gamma(\mathcal D)$ by $JX\in\Gamma(\mathcal D)$, and using Proposition \ref{propiedades-Vaisman}, we obtain 
\[ 0=-g(A,[A,JX])=g(JA,[A,X]).\]
 Thus, $[A,X]\in\Gamma(\mathcal{D})$. 

The fact that $[JA,X]\in\Gamma(\mathcal{D})$ follows in the same way from $d\theta(JA,X)=0$.
\end{proof}
	
\
	
We will prove next that the Lee form $\theta$ on an LCK manifold is parallel with respect to the Levi-Civita connection (i.e. the manifold is Vaisman) if and only if it is parallel with respect to the Bismut connection. This fact was already mentioned in \cite{Sc}.
	
\begin{theorem}\label{theta-parallel}
Let $(M,J,g)$ be an LCK manifold with fundamental $2$-form $\omega$ and Lee form~$\theta$. Then $(M,J,g)$ is Vaisman (i.e., $\nabla^g\theta=0$) if and only if $\nabla^b\theta=0$.
\end{theorem}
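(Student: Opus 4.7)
The plan is to use the explicit formula \eqref{nablab-formula} comparing $\nabla^b$ and $\nabla^g$ and observe that the discrepancy between $\nabla^b\theta$ and $\nabla^g\theta$ vanishes identically on any LCK manifold; once this is established the equivalence is immediate.

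First I would unfold the definition of the covariant derivative of a 1-form: for any vector fields $X,Y$ on $M$,
\[
(\nabla^b_X\theta)(Y)-(\nabla^g_X\theta)(Y)=-\theta(\nabla^b_XY-\nabla^g_XY).
\]
Substituting the difference tensor from \eqref{nablab-formula}, the right-hand side equals
\[
-\tfrac12\bigl(\theta(JX)\theta(JY)-\theta(JY)\theta(JX)-\omega(X,Y)\theta(JA)\bigr),
\]
and the first two terms cancel, so the whole expression collapses to $\tfrac12\omega(X,Y)\theta(JA)$.

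The key point is then the identity $\theta(JA)=0$, which holds on any Hermitian manifold: since $\theta$ is metric dual to $A$ and $J$ is skew-symmetric with respect to $g$, one has $\theta(JA)=g(A,JA)=-g(JA,A)=-\theta(JA)$. Hence $(\nabla^b_X\theta)(Y)=(\nabla^g_X\theta)(Y)$ for all $X,Y\in\mathfrak X(M)$, i.e.\ $\nabla^b\theta=\nabla^g\theta$ as tensor fields, and in particular one of the two vanishes iff the other does.

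I do not expect any real obstacle here: the proof is a one-line consequence of the explicit formula \eqref{nablab-formula} together with the fact that $g(A,JA)=0$. The only thing to be slightly careful about is the bookkeeping of signs in the convention $J\theta(X)=-\theta(JX)$ so that the two cross terms coming from the torsion indeed cancel and the only surviving term is the one proportional to $\theta(JA)$.
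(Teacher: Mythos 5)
Your proof is correct and follows essentially the same route as the paper: both arguments show that $\nabla^b\theta=\nabla^g\theta$ by checking that the correction term $-\tfrac12\,\theta(T^b(X,Y))=-\tfrac12\,c(A,X,Y)$ vanishes, which ultimately rests on $g(A,JA)=0$. The only difference is that the paper invokes Corollary~\ref{torsion-A} for this vanishing, whereas you re-derive it inline from the explicit expression \eqref{nablab-formula} for the difference tensor.
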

	
\begin{proof}
Let us compute $(\nabla^b_X \theta)Y$ for any $X,Y\in\mathfrak{X}(M)$:
	\begin{align*}
			(\nabla^b_X \theta)Y & = X(\theta(Y))-\theta(\nabla^b_XY) \\
			& = Xg(A,Y)-g(\nabla^b_XY,A) \\
			& = g(\nabla^g_XA,Y)+g(A,\nabla^g_XY)- \left(g(\nabla^g_XY,A)+\frac12 c(X,Y,A)\right) \\
			& = g(\nabla^g_XA,Y)
	\end{align*}
	according to 
	Corollary \ref{torsion-A}. Since $g(\nabla^g_XA,Y)=(\nabla^g_X \theta)Y$, the result follows.
\end{proof}
	
Since $\nabla^b J=0$, it follows from Theorem \ref{theta-parallel} that $\nabla^b J\theta=0$, or equivalently, $\nabla^b JA=0$. As an immediate consequence we have the following important result:
	
\begin{corollary}\label{hol}
If $(M,J,g)$ is Vaisman and $\dim M=2n$, then the holonomy group $\operatorname{Hol}^b(M)$ of the Bismut connection is contained in $\operatorname{U}(n-1)$.
\end{corollary}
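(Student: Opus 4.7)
The plan is to deduce the holonomy reduction directly from the two parallel vector fields $A$ and $JA$ produced by Theorem~\ref{theta-parallel}. More precisely, I would first observe that the Bismut connection is metric, so from $\nabla^b\theta=0$ together with $g(A,\,\cdot\,)=\theta$ one immediately obtains $\nabla^b A=0$. Combining this with $\nabla^b J=0$, which is part of the definition of a Hermitian connection, gives $\nabla^b(JA)=0$ as well. Thus the Vaisman manifold $(M,J,g)$ carries two globally defined $\nabla^b$-parallel vector fields $A$ and $JA$.

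Next I would fix a base point $p\in M$ and a loop $\gamma$ based at $p$, and exploit the defining property of parallel transport: a $\nabla^b$-parallel vector field restricts at any point to a vector fixed by all holonomy transformations at that point. Consequently every $P_\gamma\in\operatorname{Hol}^b_p(M)$ fixes both $A_p$ and $J_pA_p$ pointwise. In particular $P_\gamma$ acts as the identity on the $2$-dimensional real subspace $V_p:=\operatorname{span}\{A_p,J_pA_p\}$, which is a complex line because $J_p V_p=V_p$ (and indeed $|A|=1$, $\theta(JA)=0$, so $\{A_p,J_pA_p\}$ is a unitary frame of $V_p$).

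Because $\nabla^b$ is metric and $J$-preserving, every $P_\gamma$ lies in $\operatorname{U}(T_pM,J_p,g_p)\cong\operatorname{U}(n)$. In particular $P_\gamma$ is an isometry that preserves the fixed subspace $V_p$, hence also preserves its orthogonal complement $\mathcal{D}_p$, which is $J_p$-invariant of complex dimension $n-1$. The restriction $P_\gamma\big|_{\mathcal D_p}$ therefore belongs to the unitary group $\operatorname{U}(\mathcal{D}_p,J_p,g_p)\cong\operatorname{U}(n-1)$, and the original $P_\gamma$ equals the identity on $V_p$ plus a unitary transformation on $\mathcal{D}_p$. Under the block embedding $\operatorname{U}(n-1)\hookrightarrow\operatorname{U}(n)$ determined by the orthogonal splitting $T_pM=V_p\oplus\mathcal{D}_p$, this yields $\operatorname{Hol}^b_p(M)\subseteq\operatorname{U}(n-1)$, which is the claim.

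There is no substantive obstacle here: once Theorem~\ref{theta-parallel} is in hand, the argument is the standard principle that parallel tensors reduce holonomy. The only mild point worth stating explicitly is why $V_p$ is $J_p$-stable and why it therefore yields the complex embedding $\operatorname{U}(n-1)\subseteq\operatorname{U}(n)$ rather than merely a real reduction to $\operatorname{O}(2n-2)\cap\operatorname{GL}(n,\C)$; this is clear because $JA$ is itself parallel, so $V_p$ and $\mathcal{D}_p$ are simultaneously orthogonal, $J$-invariant, and holonomy-invariant.
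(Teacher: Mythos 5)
Your proposal is correct and follows exactly the paper's line of reasoning: the paper derives $\nabla^b A=0$ and $\nabla^b(JA)=0$ from Theorem~\ref{theta-parallel} together with $\nabla^b J=0$, and then states the reduction to $\operatorname{U}(n-1)$ (with the same block embedding you describe) as an immediate consequence of having a parallel $J$-invariant complex line. You have merely spelled out the standard holonomy-reduction details that the paper leaves implicit, so there is nothing to add.
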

	
Here $\operatorname{U}(n-1)$ is considered as a subgroup of $\operatorname{U}(n)$ in the following way:
\[  \operatorname{U}(n-1)\hookrightarrow \operatorname{U}(n), \qquad A \mapsto\bigg( \begin{array}{c|c}  1& \\ \hline & A \end{array}\bigg).\]
	
\

Also, combining $\nabla^b J\theta=0$ and $\nabla^b \omega=0$ with Lemma \ref{torsion} we obtain 
	
\begin{corollary}\label{parallel torsion}
On any Vaisman manifold	the torsion $3$-form $c$ of the Bismut connection is $\nabla^b$-parallel.
\end{corollary}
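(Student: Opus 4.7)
The plan is to combine the three parallelism facts available for the Bismut connection, namely $\nabla^b g = 0$, $\nabla^b J = 0$, and $\nabla^b \theta = 0$ (the last of these being the content of Theorem \ref{theta-parallel}), together with the explicit formula $c = -J\theta \wedge \omega$ from Lemma \ref{torsion}. Since $\nabla^b$ is Hermitian, the fundamental $2$-form $\omega = g(J\cdot,\cdot)$ is automatically $\nabla^b$-parallel. What remains is to show that the $1$-form $J\theta$ is also $\nabla^b$-parallel, after which the parallelism of $c$ follows because any linear connection acts as a derivation on the tensor algebra and hence the exterior product of two parallel forms is parallel.

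To handle the $J\theta$ step, I would use $J\theta(X) = -\theta(JX)$ together with $\nabla^b J = 0$ and $\nabla^b \theta = 0$. Concretely, for any vector fields $X, Y$ on $M$,
\begin{equation*}
    (\nabla^b_X (J\theta))(Y) = -(\nabla^b_X \theta)(JY) - \theta((\nabla^b_X J)Y) = 0,
\end{equation*}
since both terms vanish by Theorem \ref{theta-parallel} and the Hermitian character of $\nabla^b$. This gives $\nabla^b(J\theta) = 0$ with no computation beyond applying the product rule.

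Putting the pieces together, the final step is just the derivation property of $\nabla^b$ on $\Omega^\ast(M)$: from $\nabla^b(J\theta) = 0$ and $\nabla^b \omega = 0$ one concludes
\begin{equation*}
    \nabla^b c = -\nabla^b(J\theta \wedge \omega) = -(\nabla^b(J\theta)) \wedge \omega - J\theta \wedge (\nabla^b \omega) = 0.
\end{equation*}
There is essentially no obstacle here; all the hard work has already been absorbed into the preceding results (Lemma \ref{torsion} for the shape of $c$, Theorem \ref{theta-parallel} for the parallelism of $\theta$, and the definition of a Hermitian connection for the parallelism of $\omega$ and $J$). The corollary is a clean consequence of these three ingredients and the Leibniz rule.
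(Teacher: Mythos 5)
Your proof is correct and follows essentially the same route as the paper: the authors likewise deduce $\nabla^b(J\theta)=0$ from $\nabla^b J=0$ together with Theorem \ref{theta-parallel}, and then combine this with $\nabla^b\omega=0$ and Lemma \ref{torsion} via the Leibniz rule. The only difference is that you spell out the intermediate computation $(\nabla^b_X(J\theta))(Y)=-(\nabla^b_X\theta)(JY)-\theta((\nabla^b_XJ)Y)=0$ explicitly, which the paper leaves implicit.
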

	
\medskip
	
\begin{remark}
{\rm The converse of Corollary \ref{parallel torsion} holds: if $(M,J,g)$ is an LCK manifold such that the torsion of the Bismut connection is $\nabla^b$-parallel then $(M,J,g)$ is Vaisman. 
			
Indeed, according to Lemma \ref{torsion} the torsion of $\nabla^b$ is $c=-J\theta\wedge\omega$. If $\nabla^b_X(J\theta\wedge \omega)=0$ for any vector field $X$, then $\nabla^b_X(J\theta)\wedge \omega=0$, since $\nabla^b\omega=0$. Since $\omega$ is non degenerate, the operator $-\wedge \omega$ is injective on $1$-forms, hence $\nabla^b_X(J\theta)=0$. We deduce from Theorem \ref{theta-parallel} that $M$ is Vaisman.}
\end{remark}

\

In order to compute the holonomy of the Bismut connection on a Vaisman manifold, we look first for parallel tensors. Let us consider the following skew-symmetric $(1,1)$-tensor:
\begin{equation}\label{tensor-phi}
    \varphi=J-\theta\otimes JA+J\theta\otimes A.
\end{equation}
This tensor was introduced by Vaisman in \cite{V} and it is an \textit{$f$-structure} (i.e. $\varphi$ satisfies  $\varphi^3+\varphi=0$). It has some important properties related to the Bismut connection, as the following propositions show.

\begin{proposition}\label{phi paralelo}
On any Vaisman manifold, the tensor $\varphi$ is $\nabla^b$-parallel. 
\end{proposition}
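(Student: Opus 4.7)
The plan is to observe that every constituent of $\varphi$ is already $\nabla^b$-parallel, so the result follows by the Leibniz rule. The ingredients are already available from the previous two corollaries and the defining property of the Bismut connection.

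First, since $\nabla^b$ is a Hermitian connection, one has $\nabla^b J = 0$ and $\nabla^b g = 0$ automatically. By Theorem \ref{theta-parallel}, the Vaisman hypothesis yields $\nabla^b \theta = 0$. Because $\nabla^b$ is metric and $\theta$ is the $g$-dual of $A$, this is equivalent to $\nabla^b A = 0$. Combined with $\nabla^b J = 0$, it follows immediately that $\nabla^b(JA) = J(\nabla^b A) = 0$ and that the $1$-form $J\theta$, defined by $J\theta(X) = -\theta(JX)$, satisfies
\[
(\nabla^b_X J\theta)(Y) = -(\nabla^b_X \theta)(JY) - \theta((\nabla^b_X J)Y) = 0,
\]
so $\nabla^b(J\theta) = 0$.

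Now I would simply differentiate the two rank-two summands of $\varphi$ using the Leibniz rule for tensor products. For any $X,Y \in \mathfrak{X}(M)$,
\[
\nabla^b_X(\theta \otimes JA)(Y) = (\nabla^b_X\theta)(Y)\, JA + \theta(Y)\, \nabla^b_X(JA) = 0,
\]
and similarly $\nabla^b_X(J\theta \otimes A) = 0$. Combined with $\nabla^b J = 0$, this gives $\nabla^b \varphi = 0$.

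There is really no obstacle here: the proof is a direct assembly of the parallelism statements already proved, namely $\nabla^b J = 0$, Theorem \ref{theta-parallel}, and their formal consequences $\nabla^b A = \nabla^b(JA) = \nabla^b(J\theta) = 0$. The only point requiring a line of verification is that $\nabla^b \theta = 0$ implies $\nabla^b A = 0$, which is immediate from the fact that $\nabla^b$ preserves the metric used to identify $A$ with $\theta$.
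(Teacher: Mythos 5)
Your proof is correct and is essentially the same as the paper's: both apply the Leibniz rule to $\varphi = J - \theta\otimes JA + J\theta\otimes A$ and use $\nabla^b J = 0$ together with $\nabla^b\theta = \nabla^b A = \nabla^b(JA) = 0$, which follow from Theorem \ref{theta-parallel} and the Hermitian property of $\nabla^b$. The only difference is that you spell out the intermediate verifications (such as $\nabla^b\theta=0\Rightarrow\nabla^b A=0$) a bit more explicitly than the paper does.
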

	
\begin {proof}
For any $X\in\mathfrak{X}(M)$ we have that 
	\[ \nabla^b_X\varphi= \nabla^b_XJ+\nabla^b_X(-\theta\otimes JA+J\theta\otimes A).\]
Since $\nabla^bJ=0$, we only have to check that the second term vanishes. We compute 
\begin{align*} 
	\nabla^b_X(-\theta\otimes JA+J\theta\otimes A) & = -\nabla^b_X\theta \otimes JA -\theta\otimes \nabla^b_XJA +\nabla^b_X J\theta\otimes A + J\theta\otimes \nabla^b_XA \\
		& = 0,
\end{align*}
using again that $\nabla^b\theta=0$, $\nabla^bA=0$ and $\nabla^bJ=0$.
\end {proof}
	
\medskip

\begin{proposition}\label{torsion-JA}
On any Vaisman manifold, the torsion $3$-form $c$ of the Bismut connection satisfies $c(JA,X,Y) = -g(\varphi(X),Y)$.
\end{proposition}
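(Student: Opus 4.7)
The plan is to prove the identity by direct computation, starting from the explicit formula $c=-J\theta\wedge\omega$ provided by Lemma~\ref{torsion} and expanding both sides in terms of $\theta$, $J\theta$ and $\omega$. The proposition is essentially an algebraic identity at a point, so no differential arguments are needed; the only inputs beyond linear algebra are the normalization $|A|=1$ and the compatibility $g(JU,JV)=g(U,V)$.

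First I would expand
\[
c(JA,X,Y)=-(J\theta\wedge\omega)(JA,X,Y)=-\bigl[J\theta(JA)\,\omega(X,Y)+J\theta(X)\,\omega(Y,JA)+J\theta(Y)\,\omega(JA,X)\bigr].
\]
The three coefficients simplify using standard identities: $J\theta(JA)=-\theta(J^2A)=\theta(A)=|A|^{2}=1$; while $\omega(Y,JA)=g(JY,JA)=g(Y,A)=\theta(Y)$ and similarly $\omega(JA,X)=-\theta(X)$, by the $J$-invariance of $g$. Substituting gives
\[
c(JA,X,Y)=-\omega(X,Y)-J\theta(X)\,\theta(Y)+\theta(X)\,J\theta(Y).
\]

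Next I would compute the right-hand side. From the definition \eqref{tensor-phi} of $\varphi$,
\[
g(\varphi(X),Y)=g(JX,Y)-\theta(X)\,g(JA,Y)+J\theta(X)\,g(A,Y).
\]
Since $g(JA,Y)=-g(A,JY)=-\theta(JY)=J\theta(Y)$ and $g(A,Y)=\theta(Y)$, this becomes
\[
g(\varphi(X),Y)=\omega(X,Y)-\theta(X)\,J\theta(Y)+J\theta(X)\,\theta(Y),
\]
so that $-g(\varphi(X),Y)$ coincides with the expression obtained above for $c(JA,X,Y)$, proving the claim.

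There is no real obstacle in this argument: it is a purely pointwise identity, and once the torsion formula of Lemma~\ref{torsion} is used, the only care required is to keep track of the signs coming from $J^2=-I$ and from the $J$-invariance of~$g$. The normalization $|A|=1$, justified immediately after Proposition~\ref{propiedades-Vaisman}, is what makes the coefficient of $\omega(X,Y)$ come out to exactly~$1$.
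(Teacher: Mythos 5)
Your proof is correct and follows essentially the same route as the paper: both expand $c(JA,X,Y)$ using $c=-J\theta\wedge\omega$ from Lemma~\ref{torsion}, simplify the three terms via $J\theta(JA)=\theta(A)=1$ and the $J$-invariance of $g$, and match the result against the definition \eqref{tensor-phi} of $\varphi$. The only cosmetic difference is that you compute both sides separately and compare, whereas the paper writes a single chain of equalities; all signs and normalizations check out.
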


\begin{proof}
For $X,Y\in\mathfrak{X}(M)$, using Proposition \ref{torsion} we have that
\begin{align*}
    c(JA,X,Y) & = -J\theta \wedge \omega (JA,X,Y) \\
              & =  -J\theta(JA)\omega(X,Y)-J\theta(X)\omega(Y,JA)-J\theta(Y)\omega(JA,X) \\
			& = -g(JX,Y)-J\theta(X)g(Y,A)-g(A,JY)g(A,X) \\
			& = -g(JX,Y)-J\theta(X)g(A,Y)+\theta(X)g(JA,Y) \\		
			& = -g(\varphi(X),Y),
\end{align*}
and the proof is complete.
\end{proof}

\medskip

The tensor $\varphi$ is closely related to the $2$-form $d(J\theta)$, as the following result shows:

\begin{corollary}\label{coro-phi}
The $2$-form $d(J\theta)$ satisfies:
    \begin{enumerate}
        \item[(a)] $d(J\theta)(X,Y)=c(JA,X,Y)$ (hence also equal to $-g(\varphi(X),Y)$),
        \item[(b)] $d(J\theta)$ is $\nabla^b$-parallel,
        \item[(c)] $d(J\theta)(JX,JY)=d(J\theta)(X,Y)$ for any $X,Y\in\mathfrak{X}(M)$,
        \item[(d)] $d(J\theta)(A,\cdot)=d(J\theta)(JA,\cdot)=0$.
    \end{enumerate}
\end{corollary}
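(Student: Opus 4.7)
The plan is to prove (a) first and then derive (b), (c), (d) from it almost mechanically.

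For (a), I would use the identity, valid for any linear connection $\nabla$ with torsion $T$ and any $1$-form $\alpha$,
\[
d\alpha(X,Y) = (\nabla_X \alpha)(Y) - (\nabla_Y \alpha)(X) + \alpha(T(X,Y)),
\]
which follows from substituting $[X,Y]=\nabla_XY-\nabla_YX-T(X,Y)$ into $d\alpha(X,Y)=X\alpha(Y)-Y\alpha(X)-\alpha([X,Y])$. Applied to $\alpha=J\theta$ and $\nabla=\nabla^b$, together with $\nabla^bJ\theta=0$ (which follows from Theorem \ref{theta-parallel} combined with $\nabla^b J=0$), this collapses to
\[
d(J\theta)(X,Y)=J\theta(T^b(X,Y))=g(JA,T^b(X,Y))=c(JA,X,Y),
\]
using the definition $c(Z,X,Y)=g(Z,T^b(X,Y))$. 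The second identity in (a) is then immediate from Proposition \ref{torsion-JA}.

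For (b), the identity in (a) expresses $d(J\theta)$ as the contraction of two $\nabla^b$-parallel tensors: $c$ is parallel by Corollary \ref{parallel torsion}, and $JA$ is parallel because $\nabla^b\theta=0$ and $\nabla^b J=0$. Expanding $(\nabla^b_Z d(J\theta))(X,Y)$ and comparing with $(\nabla^b_Zc)(JA,X,Y)=0$ yields the result. For (c), the cleanest route is to verify that $\varphi$ commutes with $J$: directly from $\varphi=J-\theta\otimes JA+J\theta\otimes A$ and $J\theta(X)=-\theta(JX)$, one computes
\[
\varphi(JX)=-X+\theta(X)A+J\theta(X)JA=J\varphi(X).
\]
Then, since $J$ is a $g$-isometry, (a) gives
\[
d(J\theta)(JX,JY)=-g(\varphi(JX),JY)=-g(J\varphi(X),JY)=-g(\varphi(X),Y)=d(J\theta)(X,Y).
\]
For (d), apply (a): $d(J\theta)(A,Y)=c(JA,A,Y)=-c(A,JA,Y)=0$ by Corollary \ref{torsion-A}, and $d(J\theta)(JA,Y)=c(JA,JA,Y)=0$ by the total skew-symmetry of $c$.

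The main obstacle, such as it is, lies in (c): one must keep careful track of the sign conventions relating $J\theta(X)=-\theta(JX)$ to the metric identifications $\theta(X)=g(A,X)$ and $g(JA,X)=J\theta(X)$, and likewise $\omega(X,Y)=g(JX,Y)$. A slicker alternative to the $\varphi\circ J=J\circ\varphi$ computation is to go directly through $c=-J\theta\wedge\omega$ and the Hermitian identity $\omega(J\cdot,J\cdot)=\omega(\cdot,\cdot)$, expanding $c(JA,JX,JY)$ and $c(JA,X,Y)$ and matching terms; both routes are routine once the conventions are fixed.
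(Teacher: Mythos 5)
Your proposal is correct and follows essentially the same route as the paper: part (a) is the crux, and your use of the general identity $d\alpha(X,Y)=(\nabla_X\alpha)(Y)-(\nabla_Y\alpha)(X)+\alpha(T(X,Y))$ together with $\nabla^b(J\theta)=0$ is just a tidier packaging of the paper's direct expansion via $\nabla^b g=\nabla^b A=\nabla^b J=0$. Parts (b), (c), (d) are then derived exactly as in the paper, up to cosmetic variations (parallelism of the contraction $c(JA,\cdot,\cdot)$ versus of $-g(\varphi\cdot,\cdot)$, and skew-symmetry of $c$ plus Corollary~\ref{torsion-A} versus $\varphi(A)=\varphi(JA)=0$).
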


\begin{proof}
For $X,Y\in\mathfrak{X}(M)$, we compute
\begin{align*}
		d(J\theta)(X,Y) & = X(J\theta (Y))-Y(J\theta(X))-J\theta([X,Y]) \\
	            	            & = -X(\theta(JY))+Y(\theta(JX))+\theta(J[X,Y]) \\
								& = -Xg(A,JY)+Yg(A,JX)+g(A,J[X,Y]). 
\end{align*}
Since $\nabla^bg=0$, we have that
		\[  d(J\theta)(X,Y) = -g(\nabla^b_XA,JY)-g(A,\nabla^b_X JY)+g(\nabla^b_YA,JX)+g(A,\nabla^b_Y JX)+g(A,J[X,Y]). \]
According to Theorem \ref{theta-parallel}, we have that $\nabla^bA=0$. Now, using that $\nabla^bJ=0$ and $J$ is skew-symmetric, we obtain that
		\[  d(J\theta)(X,Y) = g(JA,T^b(X,Y))=c(JA,X,Y). \]
Therefore (a) holds. Now, (b) follows immediately from Proposition \ref{torsion-JA}.

Finally, (c) and (d) follow readily from (a). Indeed, for (c) we use that $\varphi$ and $J$ commute, and for (d) we use that $\varphi(A)=\varphi(JA)=0$.
\end{proof}

\
	
\subsection{Explicit computation of $\nabla^b$}
We describe next explicitly the Bismut connection $\nabla^b$ on Vaisman manifolds. We begin by computing $\nabla^b_A$. Since $c(A,\cdot,\cdot) = 0$ (see Corollary~\ref{torsion-A}), we have that $\nabla^b_AY = \nabla^g_AY$ for any $Y\in\mathfrak{X}(M)$.  Moreover, due to $\nabla^g A=0$, we have that $\nabla^g_AY=[A,Y]$ and therefore $\nabla^b_AY=[A,Y]$ for all $Y\in\mathfrak{X}(M)$. 

\medskip
	
Next, we determine $\nabla^b_{JA}$. In order to do this, we compute first $g(\nabla^g_{JA}X,Y)$ for any $X,Y\in\mathfrak{X}(M)$, using the Koszul formula:
\begin{align*}
	g(\nabla^g_{JA}X,Y) = & \frac12\left\{JA g(X,Y)+Xg(Y,JA)-Yg(JA,X) \right. \\ 
	& \qquad \left. +g([JA,X],Y)-g([X,Y],JA)+g([Y,JA],X)\right\}.
\end{align*}
Since $JA$ is a Killing vector field, we have that $JA g(X,Y)=g([JA,X],Y)+g(X,[JA,Y])$, so that the expression above becomes
	\begin{align*}
		g(\nabla^g_{JA}X,Y) & =\frac12\left\{2g([JA,X],Y)+X(J\theta(Y))-Y(J\theta(X))-J\theta([X,Y]) \right\}\\
		& = g([JA,X],Y)+\frac12 d(J\theta)(X,Y)\\
		& = g([JA,X],Y)-\frac12 g(\varphi(X),Y)\\
		& = g\left([JA,X]-\frac12\varphi(X),Y\right).
\end{align*}
Hence we obtain $\nabla^g_{JA}X=[JA,X]-\frac12\varphi(X)$ for any $X\in\mathfrak{X}(M)$. Now, using \eqref{bismut} and Corollary~\ref{torsion-JA}:
	\begin{align*}
		g(\nabla^b_{JA}X,Y) = & g(\nabla^g_{JA}X,Y) +\frac12 c(JA,X,Y) \\
		= & g([JA,X]-\frac12\varphi(X),Y)-\frac12 g(\varphi(X),Y)\\
		= & g([JA,X]-\varphi(X),Y),
	\end{align*}
so that $\nabla^b_{JA}X=[JA,X]-\varphi(X)$.
	
	
\medskip
	
Finally, we obtain from \eqref{nablab-formula} that, for $X,Y\in\Gamma(\mathcal{D})$,
	\begin{equation}\label{XYenD}
	    \nabla^b_XY=\nabla^g_XY-\frac12 \omega(X,Y)JA,
	\end{equation}
with $\nabla^b_XY\in\Gamma(\mathcal D)$. Indeed, observe that
	\[ g(\nabla^b_XY,A)=Xg(Y,A)-g(Y,\nabla^b_XA)=0 \]
and also
	\[ g(\nabla^b_XY,JA)=Xg(Y,JA)-g(Y,\nabla^b_XJA)=0, \]
so that $\nabla^b_XY\in\Gamma(\mathcal{D})$.

	
\
	
To sum up, we state the following theorem.
	
\begin{theorem}\label{nabla_b}
With notation as above, the Bismut connection $\nabla^b$ on the Vaisman manifold $(M,J,g)$ is given by:
	\begin{itemize}
			\item $\nabla^bA=\nabla^bJA=0$,
			\item $\nabla^b_AX=[A,X]$ for any $X\in\mathfrak{X}(M)$,
			\item $\nabla^b_{JA}X=[JA,X]-\varphi(X)$ for any $X\in\mathfrak{X}(M)$, 
			\item if $X,Y\in\Gamma(\mathcal D)$ then $\nabla^b_XY\in \Gamma(\mathcal D)$ and, moreover, $\nabla^b_XY=\nabla^g_XY-\frac12 \omega(X,Y)JA$.
	\end{itemize}
\end{theorem}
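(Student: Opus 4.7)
The plan is to verify each of the four bullets in turn, exploiting the identities already established in this section. The first bullet is the most structural: since $\nabla^b g=0$ and $\nabla^b\theta=0$ by Theorem \ref{theta-parallel}, metric duality immediately yields $\nabla^b A=0$, and then $\nabla^b JA=0$ follows from $\nabla^b J=0$ applied to $A$. This is the starting point for everything else.

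For the second bullet, I would apply \eqref{bismut} with $X=A$. By Corollary \ref{torsion-A} the torsion term vanishes, so $\nabla^b_A Y=\nabla^g_A Y$. The Vaisman hypothesis $\nabla^g A=0$ together with the vanishing of the Levi-Civita torsion gives $\nabla^g_A Y=[A,Y]$, concluding the identity. For the third bullet, which is the computational heart of the statement, I would begin from the Koszul formula for $\nabla^g_{JA}X$ and use that $JA$ is Killing (Proposition \ref{propiedades-Vaisman}(b)) to replace $JA\,g(X,Y)$ by $g([JA,X],Y)+g(X,[JA,Y])$. The remaining terms package into $\tfrac12 d(J\theta)(X,Y)$, which by Corollary \ref{coro-phi}(a) equals $-\tfrac12 g(\varphi(X),Y)$, so $\nabla^g_{JA}X=[JA,X]-\tfrac12\varphi(X)$. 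Adding the Bismut correction $\tfrac12 c(JA,X,\cdot)^{\sharp}=-\tfrac12\varphi(X)$ from Proposition \ref{torsion-JA} yields $\nabla^b_{JA}X=[JA,X]-\varphi(X)$.

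For the fourth bullet I would plug $X,Y\in\Gamma(\mathcal D)$ into \eqref{nablab-formula}. Since $\mathcal D$ is $J$-invariant and orthogonal to $\{A,JA\}$, we have $\theta(JX)=\theta(JY)=0$, so the first two correction terms disappear and only $-\tfrac12\omega(X,Y)JA$ survives. To show $\nabla^b_XY\in\Gamma(\mathcal D)$, I would pair with $A$ and $JA$ and use $\nabla^b A=\nabla^b JA=0$ together with $\nabla^b g=0$: both $X\,g(Y,A)$ and $X\,g(Y,JA)$ vanish because $Y\in\Gamma(\mathcal D)$, so $g(\nabla^b_XY,A)=g(\nabla^b_XY,JA)=0$. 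The main (minor) obstacle is the Koszul computation in bullet three, where one must carefully track the two independent contributions of $-\tfrac12\varphi(X)$—one from the Levi-Civita part via $d(J\theta)$, and one from the Bismut torsion via Proposition \ref{torsion-JA}—and see that they combine into the full $-\varphi(X)$; once this bookkeeping is done, the remaining bullets are essentially immediate consequences of the parallelism results already proved.
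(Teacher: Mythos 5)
Your proposal is correct and follows essentially the same route as the paper: the first bullet from $\nabla^b\theta=0$ and $\nabla^bJ=0$, the second from the vanishing of $c(A,\cdot,\cdot)$ plus $\nabla^gA=0$, the third via the Koszul formula with the Killing property of $JA$ and the identity $d(J\theta)=-g(\varphi\,\cdot,\cdot)$ combined with the torsion contribution from Proposition \ref{torsion-JA}, and the fourth by specializing \eqref{nablab-formula} and pairing with $A$ and $JA$. No gaps.
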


\medskip
	
\subsection{Curvature of $\nabla^b$} 
	
We will use the convention $R^b(X,Y)Z=\nabla^b_X\nabla^b_YZ-\nabla^b_Y\nabla^b_XZ-\nabla^b_{[X,Y]}Z$ for the $(1,3)$-curvature tensor $R^b$ of the Bismut connection. We will denote also by $R^b$ the associated $(0,4)$-curvature tensor: $R^b(X,Y,Z,W)=g(R^b(X,Y)Z,W)$.

\

In the following result we state some symmetries of the Bismut curvature tensor on Vaisman manifolds.

\begin{lemma}\label{curvatura-JJ}
On any Vaisman manifold $(M,J,g)$, the curvature tensor $R^b$ of the Bismut connection satisfies:
		\begin{enumerate}
			\item[(a)] $R^b(X,Y)JZ=JR^b(X,Y)Z$,
			\item[(b)] $R^b(X,Y,Z,W)=R^b(Z,W,X,Y)$,
			\item[(c)] $R^b(JX,JY)=R^b(X,Y)$,
			\item[(d)] $R^b(A,X)=R^b(JA,X)=0$,
		\end{enumerate} 
for any vector fields $X,Y,Z,W$ on $M$.
\end{lemma}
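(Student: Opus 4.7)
The plan is to establish the four symmetries in order, leveraging the parallel tensors $J$, $g$, $A$, $JA$, and $c$ already identified on a Vaisman manifold, together with a standard Bianchi-type argument for connections with parallel skew torsion. Part (a) is immediate from $\nabla^b J=0$: the curvature of a connection preserving a tensor commutes with that tensor.

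The heart of the lemma is part (b). The idea is to invoke the first Bianchi identity for a connection with torsion,
\[ \mathfrak{S}_{X,Y,Z} R^b(X,Y)Z = \mathfrak{S}_{X,Y,Z}\Bigl\{ T^b(T^b(X,Y),Z)+(\nabla^b_X T^b)(Y,Z)\Bigr\}, \]
and observe that the second summand vanishes: Corollary~\ref{parallel torsion} gives $\nabla^b c=0$, which combined with $\nabla^b g=0$ forces $\nabla^b T^b=0$. Pairing with $W$ via $c(U,V,W)=g(T^b(U,V),W)$ then reduces the identity to
\[ \mathfrak{S}_{X,Y,Z} R^b(X,Y,Z,W) = \Phi(X,Y,Z,W), \qquad \Phi(X,Y,Z,W):=\mathfrak{S}_{X,Y,Z} c(T^b(X,Y),Z,W). \]
The critical step is to verify that $\Phi$ is totally skew-symmetric in all four arguments; this follows by a short algebraic manipulation from $c$ being a $3$-form and $T^b$ being skew, and it can also be checked explicitly in the Vaisman setting using $c=-J\theta\wedge\omega$ (Lemma~\ref{torsion}) and the formula for $T^b$ from Corollary~\ref{torsion-12}. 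Once $\Phi$ is known to be a $4$-form, the classical Riemannian manipulation—write the above identity for each cyclic permutation of $(X,Y,Z,W)$ and add with alternating signs—produces pair symmetry, since the alternating sum of cyclically permuted $4$-forms vanishes identically.

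With (a) and (b) in hand, (c) is short: (b) gives $R^b(JX,JY,Z,W)=R^b(Z,W,JX,JY)$, and then (a) together with $g(JU,JV)=g(U,V)$ rewrites the right-hand side as $g(JR^b(Z,W)X,JY)=g(R^b(Z,W)X,Y)=R^b(Z,W,X,Y)$, which equals $R^b(X,Y,Z,W)$ by another use of (b). Finally, for (d), Theorem~\ref{theta-parallel} and the remark following Corollary~\ref{hol} give $\nabla^b A=\nabla^b(JA)=0$, so $R^b(X,Y)A=R^b(X,Y)JA=0$ for all $X,Y$; pair symmetry (b) together with the metric antisymmetry in the last pair immediately transfers the vanishing to the first pair, producing $R^b(A,X)=R^b(JA,X)=0$. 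The main obstacle is the verification in (b) that $\Phi$ is a $4$-form; once this algebraic fact is in place, the rest of the lemma is bookkeeping with the parallel tensors already at our disposal.
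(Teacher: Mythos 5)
Your proof is correct, and parts (a), (c), (d) follow the paper's argument exactly: curvature commutes with the parallel tensor $J$; the $J$-invariance in (c) is obtained by sandwiching (a) between two applications of (b); and the vanishing $R^b(\cdot,\cdot)A=R^b(\cdot,\cdot)JA=0$ coming from $\nabla^bA=\nabla^bJA=0$ is transferred to the first pair of slots via (b). The only real divergence is in (b): the paper simply cites \cite[Lemma 2.2]{CMS}, which asserts pair symmetry for any metric connection with parallel skew-symmetric torsion, whereas you reprove that lemma. Your route --- the first Bianchi identity with torsion, the observation that $\nabla^bc=0$ together with $\nabla^bg=0$ forces $\nabla^bT^b=0$, and the identification of the cyclic sum of $R^b(X,Y,Z,W)$ with $\Phi(X,Y,Z,W)=\mathfrak{S}_{X,Y,Z}\,g(T^b(X,Y),T^b(Z,W))$, which is indeed a $4$-form --- is precisely the proof behind that citation, so the argument becomes self-contained at no cost. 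One caveat on the last step: the correct combination is not an \emph{alternating} sum over the cyclic permutations. Writing $B(X,Y,Z,W)$ for the cyclic sum, the combination that works is $B(X,Y,Z,W)+B(Y,Z,W,X)-B(Z,W,X,Y)-B(W,X,Y,Z)$: its left-hand side collapses, using only the antisymmetries in the first and in the last pair of arguments, to $2\bigl(R^b(X,Y,Z,W)-R^b(Z,W,X,Y)\bigr)$, while its right-hand side equals $\Phi-\Phi-\Phi+\Phi=0$ because a $4$-cycle is an odd permutation. The genuinely alternating combination $B(X,Y,Z,W)-B(Y,Z,W,X)+B(Z,W,X,Y)-B(W,X,Y,Z)$ would instead give $4\Phi$ on the right and does not close up on the left. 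With that sign pattern corrected, everything goes through.
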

	
\begin{proof}
(a) holds for the Bismut connection on any Hermitian manifold, since $\nabla^bJ=0$.
		
\medskip
		
(b) holds for any metric connection with \textit{parallel} skew-symmetric torsion, according to \cite[Lemma 2.2]{CMS}. Recall that this is the case for the Bismut connection on a Vaisman manifold, due to Corollary \ref{parallel torsion}.
		
\medskip
		
(c) follows from (a) and (b). Indeed, for any vector fields $X,Y,Z,W$ on $M$, we have that
		\begin{align*}
			g(R^b(JX,JY)Z,W) & = R^b(JX,JY,Z,W) \\
			& = R^b(Z,W,JX,JY) \\
			& = g(R^b(Z,W)JX,JY) \\
			& = g(R^b(Z,W)X,Y)\\
			& = g(R^b(X,Y)Z,W).
		\end{align*}
		
\medskip

(d) follows from (b). Indeed, for vector fields $X,U,V$ on $M$ we compute
	\[ g(R^b(A,X)U,V)=g(R^b(U,V)A,X)=0   \]
since $A$ is $\nabla^b$-parallel. The analogous result holds for $JA$ since it is also $\nabla^b$-parallel.
\end{proof}

\
	
Next, we will establish an explicit relation between the Bismut curvature $R^b$ and the Riemannian curvature $R^g$. For this, we will use the following formula from \cite{IP}, which in this case has been simplified since the torsion $3$-form $c$ is $\nabla^b$-parallel:
	\begin{align*}
		R^b(X,Y,Z,U)= & R^g(X,Y,Z,U)+\frac12g(T^b(X,Y),T^b(Z,U))\\ 
		& \qquad +\frac14 g(T^b(X,U),T^b(Y,Z))+\frac14 g(T^b(Y,U),T^b(Z,X)), 
	\end{align*}
for any vector fields $X,Y,Z,U$ on $M$. Using the expression for $T^b$ given in Corollary \ref{torsion-12}, and after lengthy computations, we arrive at:
	\begin{align}
		R^b(X,Y)Z & = R^g(X,Y)Z -\frac14\theta(JY) \theta(JZ)X +\frac14 \theta(JX)\theta(JZ)Y \label{Rb-Rg} \nonumber\\
			& \quad +\frac14 g(\varphi(Y),Z)JX-\frac14 g(\varphi(X),Z)JY+\frac12g(\varphi(X),Y)JZ \\
			& \quad +\frac14(-\omega(X,Y) \theta(JZ)+J\theta\wedge\omega(X,Y,Z))A \nonumber\\
		\nonumber	& \quad -\frac14(J\theta\wedge\omega(X,Y,JZ)+\theta\wedge\omega(X,Y,Z))JA \nonumber.
 \end{align}

Observe that, since $R^b(JA,\cdot)=0$, we obtain from \eqref{Rb-Rg} the following expression for $R^g(JA,Y)$:
\begin{equation}\label{ric-JA}
R^g(JA,Y)Z= \frac14 \theta(JZ)Y -\frac14 \theta(Y)\theta(JZ)A+\frac14 \{\theta(JY)\theta(JZ)+g(\varphi(Y),JZ) \} JA,
\end{equation}
where we have used Lemma \ref{torsion} and Corollary \ref{coro-phi}.

\

We study now some properties of the Bismut Ricci curvature $\operatorname{Ric}^b$, defined as usual by $\operatorname{Ric}^b(X,Y)=\trace{(Z\to R^b(Z,X)Y)}$. The next result follows easily from Lemma \ref{curvatura-JJ}:

\begin{corollary}\label{ric-sym}
The Bismut Ricci curvature $\Ric^b$ of a Vaisman manifold satisfies:
\begin{enumerate}
    \item[(a)] $\Ric^b$ is symmetric;
    \item[(b)] $\Ric^b(JX,JY)=\Ric^b(X,Y)$ for any vector fields $X,Y$.
\end{enumerate}
\end{corollary}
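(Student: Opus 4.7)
The plan is to deduce both claims directly from the symmetries collected in Lemma~\ref{curvatura-JJ}, by expressing the Bismut Ricci tensor through a local $g$-orthonormal frame $\{e_i\}_{i=1}^{2n}$ on $M$:
\[
\Ric^b(X,Y)=\sum_{i=1}^{2n} g(R^b(e_i,X)Y,e_i)=\sum_{i=1}^{2n} R^b(e_i,X,Y,e_i).
\]
Two basic ingredients will be used repeatedly: the metric condition $\nabla^b g=0$ forces skew-symmetry of $R^b$ in each of its first and last pairs of slots, and the fact that $J$ is a $g$-isometry makes $\{Je_i\}_{i=1}^{2n}$ an orthonormal frame as well.

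For part (a), I will combine the pair-swap identity $R^b(X,Y,Z,W)=R^b(Z,W,X,Y)$ from Lemma~\ref{curvatura-JJ}(b) with the two antisymmetries. Concretely, starting from $\Ric^b(X,Y)=\sum_i R^b(e_i,X,Y,e_i)$, one pair-swap gives $R^b(Y,e_i,e_i,X)$, and two antisymmetries (one in the first pair, one in the last pair) turn this into $R^b(e_i,Y,X,e_i)$, yielding $\Ric^b(Y,X)$. This is a standard calculation, so no serious obstacle arises.

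For part (b), the idea is to reindex the trace by $f_i:=Je_i$, which is still orthonormal:
\[
\Ric^b(JX,JY)=\sum_i g\bigl(R^b(Je_i,JX)JY,\,Je_i\bigr).
\]
Now Lemma~\ref{curvatura-JJ}(c) collapses $R^b(Je_i,JX)=R^b(e_i,X)$, and Lemma~\ref{curvatura-JJ}(a) lets one pull the $J$ outside: $R^b(e_i,X)JY=JR^b(e_i,X)Y$. Finally, the isometry property $g(J\cdot,J\cdot)=g(\cdot,\cdot)$ removes the remaining $J$'s, leaving $\sum_i g(R^b(e_i,X)Y,e_i)=\Ric^b(X,Y)$.

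There is no genuine obstacle here; the only thing to be careful about is ordering the symmetries correctly so that each step is the legal one. All the structural inputs (pair-swap, $J$-commutation, $J$-bi-invariance of $R^b$) are exactly what Lemma~\ref{curvatura-JJ} supplies, which is why this result is stated as a corollary.
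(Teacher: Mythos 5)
Your proposal is correct and follows exactly the route the paper intends: the paper simply asserts that the corollary ``follows easily from Lemma~\ref{curvatura-JJ}'', and your computation — pair-swap plus the two antisymmetries of the $(0,4)$-tensor for part (a), and reindexing the trace by the orthonormal frame $\{Je_i\}$ together with Lemma~\ref{curvatura-JJ}(a) and (c) for part (b) — is precisely the omitted verification. No gaps; the steps are in a legal order and all inputs used are supplied by the lemma.
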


We point out that $\Ric^b$ being symmetric is not a surprising fact, since it holds for any metric connection with parallel skew-symmetric torsion. 

\medskip

Now, we are able to obtain an expression for $\Ric^b$, the Bismut Ricci curvature, in terms of the Riemannian Ricci curvature $\Ric^g$. Indeed, let us consider a local orthonormal frame of the form $\{A,JA\}\cup\{e_1,\ldots,e_{2n-2}\}$ where $e_i$ is a local section of $\mathcal D$ for each $i$. Therefore, for any vector fields $Y,Z$ on $M$,
		\begin{align*} 
			\Ric^b(Y,Z) & =g(R^b(A,Y)Z,A)+g(R^b(JA,Y)Z,JA)+\sum_i g( R^b(e_i,Y)Z,e_i) \\
			& = \sum_i g(R^b(e_i,Y)Z,e_i) ,
		\end{align*}	
due to Lemma \ref{curvatura-JJ}(d). 
Using \eqref{Rb-Rg} we obtain
\begin{align*}
\Ric^b(Y,Z) & = \sum_i g(R^g(e_i,Y)Z,e_i) \\ 
			& \qquad+\sum_i \left( -\frac14 \theta(JY)\theta(JZ)-\frac14 g(Je_i,Z)g(JY,e_i)+\frac12 g(Je_i,Y)g(JZ,e_i) \right)\\
			& = \sum_i g(R^g(e_i,Y)Z,e_i) 
			-\frac14\sum_i \left( \theta(JY)\theta(JZ)+  g(JZ,e_i)g(JY,e_i) \right).
\end{align*}

Since $\nabla^g A=0$ we have that $R^g(A,\cdot)=0$, hence
\begin{align*}
    \operatorname{Ric}^g(Y,Z) & =g(R^g(JA,Y,)Z,JA)+\sum_i g(R^g(e_i,Y)Z,e_i)\\
    & =\frac14 g(\varphi(Y),JZ)+\sum_i g(R^g(e_i,Y)Z,e_i)\\
    & = \frac14 (g(Y,Z)-\theta(Y)\theta(Z)-\theta(JY)\theta(JZ))+\sum_i g(R^g(e_i,Y)Z,e_i),
\end{align*} 
where we have used \eqref{ric-JA} in the second equality and the definition of $\varphi$ in the third. Therefore, combining both expressions:
\begin{align}
    \Ric^b(Y,Z) & = \operatorname{Ric}^g(Y,Z)-\frac14 (g(Y,Z)-\theta(Y)\theta(Z)-\theta(JY)\theta(JZ)) \label{ricb-ricg}\nonumber\\
    & -\frac14 \left( (2n-2)\theta(JY)\theta(JZ)+g(Y,Z)-g(JZ,A)g(JY,A)-g(JZ,JA)g(JY,JA)\right)  \\
    & = \operatorname{Ric}^g(Y,Z)-\frac12 g(Y,Z)+\frac12 \theta(Y)\theta(Z)-\frac{n-2}{2}\theta(JY)\theta(JZ). \nonumber
\end{align}

\

As expected, according to Corollary \ref{ric-sym}, $\Ric^b$ is symmetric since the expression above is symmetric in $Y$ and $Z$. It was proved in \cite{IP} that the symmetry of $\Ric^b$ is equivalent to the torsion $3$-form being co-closed, thus we obtain: 

\begin{corollary}
On any Vaisman manifold, the Bismut torsion $3$-form $c$ is co-closed.
\end{corollary}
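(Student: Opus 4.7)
The plan is to invoke the general result of \cite{IP} which asserts that, for any metric connection whose torsion is totally skew-symmetric, the symmetry of the associated Ricci tensor is equivalent to the coclosedness of the torsion $3$-form. Since the Bismut connection satisfies the skew-symmetric torsion hypothesis by construction, and since Corollary~\ref{ric-sym}(a) has already established that $\Ric^{b}$ is symmetric on any Vaisman manifold, the desired identity $d^{\ast}c = 0$ follows at once. This is the approach I would present first, as it reuses work already done and isolates the statement as a corollary of the explicit computation of $\Ric^{b}$ that preceded it.

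If one preferred a self-contained argument, the strategy would be to start from Corollary~\ref{parallel torsion}, namely $\nabla^{b} c = 0$, and write $d^{\ast}c = -\sum_{i}(\nabla^{g}_{e_{i}} c)(e_{i},\cdot,\cdot)$ in a local orthonormal frame. Replacing $\nabla^{g}$ by $\nabla^{b}$ via the relation $\nabla^{b}_{X} - \nabla^{g}_{X} = \tfrac{1}{2}T^{b}(X,\cdot)$ read off from \eqref{nablab-formula}, the $\nabla^{b}$-part vanishes by parallelism and $d^{\ast}c$ reduces to a purely algebraic contraction of $c$ against $T^{b}$. Using the explicit formulas $c = -J\theta \wedge \omega$ from Lemma~\ref{torsion} and $T^{b}(X,Y) = \theta(JX)JY - \theta(JY)JX - \omega(X,Y)JA$ from Corollary~\ref{torsion-12}, together with $\theta(A)=1$, $\theta(JA)=0$ and the vanishing of $\theta$, $J\theta$ on $\mathcal{D}$, the resulting sum collapses to zero by a direct orthogonality check.

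The main (and essentially only) obstacle is verifying that the equivalence from \cite{IP} is available in the precise form needed, namely for the Bismut connection with parallel skew-symmetric torsion; once this is confirmed, the proof is a one-line invocation. The direct alternative involves some bookkeeping with the $A$, $JA$, $\mathcal{D}$ decomposition but presents no conceptual difficulty, since every tensor involved has already been computed explicitly in the preceding subsections.
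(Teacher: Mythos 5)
Your primary argument is exactly the paper's proof: the authors also deduce co-closedness of $c$ by combining the symmetry of $\Ric^b$ (established in Corollary~\ref{ric-sym} and confirmed by the explicit formula for $\Ric^b$) with the equivalence from \cite{IP} between symmetry of the Bismut Ricci tensor and co-closedness of the torsion $3$-form. The proposal is correct and takes essentially the same approach.
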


\

On the other hand, concerning the closedness of the torsion $3$-form $c$, the following result shows that $c$ is never closed in high dimensions.

\begin{proposition}\label{c not closed}
On a Vaisman manifold of dimension $2n\geq 6$, the Bismut torsion $3$-form $c$ is not closed. 
\end{proposition}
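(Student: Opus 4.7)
The plan is to differentiate the explicit formula $c = -J\theta\wedge\omega$ from Lemma \ref{torsion} and reduce $dc$ to something visibly non-zero. The Leibniz rule together with the LCK identity $d\omega = \theta\wedge\omega$ gives
\[ dc = -d(J\theta)\wedge\omega + J\theta\wedge d\omega = -d(J\theta)\wedge\omega - \theta\wedge J\theta\wedge\omega, \]
so what is really needed is an explicit expression for $d(J\theta)$.

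For that I would combine Corollary \ref{coro-phi}(a), which identifies $d(J\theta)(X,Y) = -g(\varphi(X),Y)$, with the definition \eqref{tensor-phi} of $\varphi$. Using the identities $g(JA,Y) = J\theta(Y)$ and $g(A,Y) = \theta(Y)$, a short computation produces
\[ d(J\theta) = -\omega + \theta\wedge J\theta. \]
Plugging this into the expression above, one sees that $\theta\wedge J\theta\wedge\omega$ appears twice with the same sign, yielding
\[ dc = \omega^2 - 2\,\theta\wedge J\theta\wedge\omega. \]

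The next step is to interpret this in terms of the orthogonal splitting $TM = \langle A,JA\rangle \oplus \mathcal{D}$. Setting $\omega_\mathcal{D} := \omega - \theta\wedge J\theta$, one checks directly that $\omega_\mathcal{D}(X,\cdot)$ vanishes whenever $X\in\langle A,JA\rangle$, while $\omega_\mathcal{D}$ agrees with $\omega$ on $\mathcal{D}$. Expanding $\omega^{2}$ through $\omega = \omega_\mathcal{D} + \theta\wedge J\theta$ and using that $(\theta\wedge J\theta)^{2}=0$ together with the fact that any two $2$-forms commute under wedge, the cancellations are clean and collapse the expression to
\[ dc = \omega_\mathcal{D}^{\,2}. \]

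The dimensional hypothesis now finishes the argument. Since $\mathcal{D}$ is $J$-invariant of real rank $2n-2$ and $g|_\mathcal{D}$ is positive definite, $\omega|_\mathcal{D}$ is a non-degenerate $2$-form on $\mathcal{D}$. Hence $\omega_\mathcal{D}^{\,n-1}$ is a nowhere-vanishing section, and in particular $\omega_\mathcal{D}^{\,2}$ is nowhere zero as soon as $2n-2\geq 4$, that is $n\geq 3$. This forces $dc\neq 0$ pointwise, proving the proposition. The only non-trivial part of the argument is the sign and cancellation bookkeeping in the wedge-product computation; once $d(J\theta)$ has been made explicit, the rest is immediate. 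As a sanity check, the same identity $dc = \omega_\mathcal{D}^{\,2}$ predicts $dc=0$ in real dimension $4$, so the lower bound $2n\geq 6$ is sharp.
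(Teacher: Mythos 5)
Your argument is correct, and it reaches the conclusion by a genuinely different route than the paper. The paper writes $dc=-\bigl(d(J\theta)-J\theta\wedge\theta\bigr)\wedge\omega$ and then invokes the injectivity of the map $\beta\mapsto\beta\wedge\omega$ on $2$-forms in real dimension $\geq 6$: if $dc=0$ the $2$-form $\eta=d(J\theta)-J\theta\wedge\theta$ must vanish, yet $\eta(A,JA)=1$ by Corollary~\ref{coro-phi}(d), a contradiction. You instead make $d(J\theta)$ fully explicit, namely $d(J\theta)=-\omega+\theta\wedge J\theta=-\omega_{\mathcal D}$ (which follows from Corollary~\ref{coro-phi}(a) and the definition \eqref{tensor-phi} of $\varphi$, using the normalization $|A|=1$), and arrive at the closed formula $dc=\omega_{\mathcal D}^{\,2}$. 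The wedge bookkeeping is right: $\omega^2=\omega_{\mathcal D}^2+2\,\theta\wedge J\theta\wedge\omega_{\mathcal D}$ and $2\,\theta\wedge J\theta\wedge\omega=2\,\theta\wedge J\theta\wedge\omega_{\mathcal D}$, so the mixed terms cancel. What your version buys is more information: $dc$ is not merely non-zero but nowhere vanishing, with an explicit value, and the identity $dc=\omega_{\mathcal D}^{\,2}$ makes transparent why the statement fails in real dimension $4$ (where $\operatorname{rank}\mathcal D=2$ forces $\omega_{\mathcal D}^{\,2}=0$), recovering the pluriclosedness of $4$-dimensional Vaisman metrics noted in the paper's subsequent remark. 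What the paper's version buys is brevity: it never needs the explicit form of $d(J\theta)$, only its vanishing on the pair $(A,JA)$, at the cost of appealing to the Lefschetz-type injectivity of $\cdot\wedge\omega$. Both proofs rest on the same two ingredients, Lemma~\ref{torsion} and Corollary~\ref{coro-phi}.
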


\begin{proof}
The $3$-form $c$ is given by $c=-J\theta\wedge \omega$, according to Lemma \ref{torsion}. Therefore $dc$ is given by
\[ dc=-d(J\theta)\wedge \omega +J\theta\wedge d\omega=-(d(J\theta)-J\theta\wedge \theta)\wedge \omega. \]  
So, if $dc=0$ then $\eta:=d(J\theta)-J\theta\wedge \theta=0$, since in dimensions at least $6$ the operator $-\wedge \omega$ is injective on $2$-forms.
However, it follows from Corollary \ref{coro-phi}(d) that 
\[ d(J\theta)(A,JA)=0. \]
On the other hand,
\[(J\theta \wedge \theta)(A,JA)=J\theta(A)\theta(JA)-J\theta(JA)\theta(A)=-1. \]
Hence $\eta(A,JA)=1\neq 0$, a contradiction. As a consequence, $dc\neq 0$.
\end{proof}

\smallskip

\begin{remark}
{\rm 
(i) A Hermitian metric whose associated Bismut torsion $3$-form $c$ is closed is called \textit{pluriclosed} or \textit{strong K\"ahler with torsion (SKT)}. This condition is equivalent to $\partial\overline{\partial} \omega=0$. According to Proposition \ref{c not closed}, a Vaisman metric in dimension $\geq 6$ is never pluriclosed. This result was already known in the compact case, since it was proved in \cite{AI} that on a compact Hermitian manifold of dimension at least 6, the Hermitian metric cannot be LCK and pluriclosed simultaneously, unless the metric is K\"ahler.
    
\smallskip (ii) Notice that according to \cite[Theorem A]{FT}, if $(M,J,g)$ is a $4$-dimensional Vaisman manifold then the Hermitian structure $(J,g)$ is pluriclosed and $\nabla^b$ satisfies the first Bianchi identity. In particular, in real dimension $4$ the torsion $3$-form is harmonic. However, more can be said: $c$ is also $\nabla^g$-parallel, which can be seen from the relation $c=-\ast \theta$, proved in \cite{IP}, which holds for any 4-dimensional LCK manifold. Belgun provided in \cite{Bel} the classification of compact complex surfaces which admit Vaisman metrics: they are properly elliptic surfaces, Kodaira surfaces (either primary or secondary), elliptic Hopf surfaces and Hopf surfaces of class $1$.

\smallskip 

(iii) On Vaisman manifolds of dimension greater than or equal to 6, according to Corollary~\ref{parallel torsion}, Proposition~\ref{c not closed} and \cite[Theorem 3.2]{FT}, the Bismut connection does not satisfy the first Bianchi identity, and therefore it is not K\"ahler-like.  However, due to Lemma~\ref{curvatura-JJ}(c), the Bismut connection satisfies the \emph{type condition} (see for instance \cite{AOUV})}. 
\end{remark}

\
	
\section{Bismut holonomy of Vaisman solvmanifolds}\label{solvmanifold}

In this section we will study the Bismut holonomy of a concrete family of Vaisman manifolds; namely, solvmanifolds equipped with invariant Vaisman structures. We will call them simply Vaisman solvmanifolds. In order to perform this analysis, we will use the results appearing in \cite{AO}.

\

Let $G$ be a Lie group with a left invariant complex structure $J$ and a left invariant metric $g$, i.e. the left translations $L_g:G\to G$ defined by $L_g(h)=gh$ for $h\in G$ are both biholomorphisms and isometries. If $(G,J,g)$ satisfies the LCK condition \eqref{dif}, then $(J,g)$ is called a 
{\em left invariant LCK structure} on the Lie group $G$. In this case, it follows from \eqref{d-theta} that the corresponding Lee form $\theta$ on $G$ is also left invariant. 

We will restrict our study to solvable Lie groups equipped with left invariant Vaisman structures. If the solvable Lie group $G$ is simply connected then any left invariant Vaisman structure on $G$ turns out to be globally conformal to a K\"ahler structure. Therefore we will consider quotients $M_\Gamma:=\Gamma\backslash G$ where $\Gamma$ is a co-compact discrete subgroup of $G$, so that $M_\Gamma$ is a compact manifold such that the canonical projection $G\to M_\Gamma$ is a local diffeomorphism. The compact quotient $M_\Gamma$ is not simply connected (as $\pi_1(M_\Gamma)=\Gamma$) and it inherits a Vaisman structure. The aim of this section is to analyze the holonomy of the Bismut connection on $M_\Gamma$ associated to this induced structure.

A co-compact discrete subgroup $\Gamma$ of a simply connected solvable Lie group $G$ is called a lattice and the quotient $M_\Gamma=\Gamma\backslash G$ is known as a solvmanifold. We point out that, according to \cite{Mi}, if $G$ admits a lattice then $G$ is unimodular (i.e., $\tr \ad_x=0$ for all $x\in \operatorname{Lie}(G)$). 

\ 

Since we are dealing with left invariant structures on Lie groups, we can work at the Lie algebra level. Therefore we will consider LCK or Vaisman structures on Lie algebras, that is, a Hermitian structure $(J,\pint)$ on a Lie algebra $\frg$, where $\pint$ is an inner product on $\frg$ and $J:\frg\to\frg$ is a skew-symmetric endomorphism of $\frg$ that satisfies 
\[ J^2=-\operatorname{I}, \quad \text{and} \quad [Jx,Jy]-[x,y]-J([Jx,y]+[x,Jy])=0,\]    for any $x,y\in \frg$. Moreover, $d\omega=\theta\wedge \omega$ for some closed $1$-form $\theta\in \frg^*$, and $\nabla^g\theta= 0$ in the Vaisman case. 

As before, let $A\in \frg$ denote the vector dual to $\theta$, i.e., $\theta(U)=\langle A,U\rangle$ for all $U\in\frg$. We may assume $|A|=1$. In this context, Proposition \ref{propiedades-Vaisman} takes the following form:

\begin{proposition}
If $(\frg,J,\pint)$ is Vaisman then
\begin{enumerate}
	\item[(a)] $[A,JA]=0$, 
	\item[(b)] $\ad_A$ and $\ad_{JA}$ are skew-symmetric;
	\item[(c)] $J\circ\ad_A=\ad_A\circ J$.
\end{enumerate}
\end{proposition}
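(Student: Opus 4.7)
The plan is to realize $(\frg,J,\pint)$ as a left invariant Vaisman structure on the simply connected Lie group $G$ with Lie algebra $\frg$, and then to obtain each assertion by reading the corresponding part of Proposition~\ref{propiedades-Vaisman} at the identity element of $G$. Since the $1$-form $\theta$ is left invariant, so is its metric dual $A$; left invariance of $J$ then forces $JA$ to be a left invariant vector field on $G$ as well. Thus each of (a), (b), (c) reduces to reinterpreting the manifold statement for the left invariant fields $A$ and $JA$.

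For (a), the Lie bracket of left invariant vector fields computed on $G$ coincides with the Lie algebra bracket on $\frg$, so the identity $[A,JA]=0$ from Proposition~\ref{propiedades-Vaisman}(a) transfers verbatim. For (c), the condition $\mathcal L_A J=0$ in Proposition~\ref{propiedades-Vaisman}(c) means $[A,JY]=J[A,Y]$ for every vector field $Y$; evaluating on a left invariant field corresponding to $y\in\frg$ yields $\ad_A(Jy)=J\ad_A(y)$, which is exactly $J\circ\ad_A=\ad_A\circ J$.

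For (b), I will appeal to the standard dictionary: a left invariant vector field $X$ on a Lie group equipped with a left invariant metric is a Killing field if and only if $\ad_X\in\mathfrak{gl}(\frg)$ is skew-symmetric with respect to $\pint$. This is a direct consequence of the Koszul formula applied to left invariant fields, together with the fact that $\la Y,Z\ra$ is constant for left invariant $Y,Z$. Since Proposition~\ref{propiedades-Vaisman}(b) asserts that both $A$ and $JA$ are Killing, the skew-symmetry of $\ad_A$ and $\ad_{JA}$ follows at once.

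The only step requiring an external input is (b), where one invokes the Killing/skew-symmetry characterization for left invariant fields; parts (a) and (c) are formal translations of the manifold statement, so no serious obstacle is expected.
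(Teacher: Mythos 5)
Your proposal is correct and follows exactly the route the paper intends: the paper states this proposition as the left invariant reformulation of Proposition~\ref{propiedades-Vaisman} without separate proof, and your argument simply supplies the standard translation (bracket of left invariant fields equals the algebraic bracket, $\mathcal{L}_AJ=0$ read on left invariant fields, and the Koszul-formula characterization of left invariant Killing fields via skew-symmetry of $\ad_X$). All three steps are sound, so there is nothing to add.
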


\ 

Solvable Lie groups equipped with left invariant Vaisman structures, and their associated Vaisman solvmanifolds,  were studied in \cite{AO}. We will recall some of the results from that article that will be needed for our study.

\medskip

\begin{lemma}\label{lem-centro}\cite{AO}
Let $\frg$ be a unimodular solvable Lie algebra equipped with a Vaisman structure $(J,\pint)$ and let $\mathfrak{z}(\frg)$ denote the center of $\frg$. Then $JA\in\mathfrak{z}(\frg)$. Moreover $\frz(\frg)\subset \operatorname{span}\{A,JA\}$.
\end{lemma}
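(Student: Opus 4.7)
The plan is to exploit the identity
\[ g([V, W], JA) = \omega(V, W), \qquad V, W \in \mathcal{D}, \]
already noted in the paragraph preceding Corollary~\ref{D} (a consequence of $d\omega = \theta\wedge\omega$ together with the skew-symmetry of $\ad_A$ and its commutation with $J$), and to combine it with Lie's theorem for solvable Lie algebras.

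For the first assertion, I would show $\ad_{JA} = 0$ by arguing that $\ad_{JA}$ is simultaneously nilpotent and diagonalizable over $\C$. Diagonalizability is immediate from the skew-symmetry of $\ad_{JA}$ (the Killing property of $JA$). For nilpotency, I would first prove $JA \in [\frg, \frg]$ and then invoke Lie's theorem on the solvable complexification $\frg_\C$: this produces a basis putting all $\ad_x$ in a common upper-triangular form, with diagonal entries the characters $\chi_i: \frg_\C \to \C$, which vanish on $[\frg_\C, \frg_\C]$; hence $\ad_x$ becomes strictly upper triangular, and therefore nilpotent, whenever $x \in [\frg, \frg]$. To obtain $JA \in [\frg, \frg]$, the key identity together with non-degeneracy of $\omega|_\mathcal{D}$ gives that the projection of $[\mathcal{D},\mathcal{D}]$ onto $\R\cdot JA$ is surjective, and I would use unimodularity (i.e.\ $\operatorname{tr}\ad_V = 0$ for $V \in \mathcal{D}$) to control the accompanying $\mathcal{D}$-components of these brackets, so that a pure $JA$-element can be extracted from $[\mathcal{D},\mathcal{D}]$. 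This last step---absorbing the $\mathcal{D}$-contamination into $[\frg,\frg]$---is the main technical obstacle.

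Once $JA$ is known to be central, the inclusion $\frz(\frg) \subseteq \operatorname{span}\{A, JA\}$ follows immediately. I would take $z \in \frz(\frg)$ and decompose $z = \alpha A + \beta JA + z_0$ with $z_0 \in \mathcal{D}$. For any $W \in \mathcal{D}$, centrality combined with $[JA,W] = 0$ reduces $[z, W] = 0$ to $\alpha[A,W] + [z_0, W] = 0$. Pairing with $JA$ and using $g(JA, [A,W]) = -g(W, [A,JA]) = 0$ (from the skew-symmetry of $\ad_A$ and $[A, JA] = 0$) together with the identity $g(JA, [z_0, W]) = \omega(z_0, W)$, one arrives at $\omega(z_0, W) = 0$ for every $W \in \mathcal{D}$. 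Non-degeneracy of $\omega|_\mathcal{D}$ then forces $z_0 = 0$, completing the proof.
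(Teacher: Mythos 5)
The paper does not actually prove this lemma---it is imported verbatim from \cite{AO}---so there is no in-text argument to compare yours against; I can only assess your proposal on its own terms. Your second half is fine: granting $JA\in\frz(\frg)$, the computation $0=g(JA,[z,W])=\alpha\, g(JA,[A,W])+g(JA,[z_0,W])=\omega(z_0,W)$ for all $W\in\mathcal D$, together with the non-degeneracy of $\omega|_{\mathcal D}$, correctly forces $z_0=0$. The skeleton of your first half is also sound: a real skew-symmetric operator is semisimple over $\C$, an element of $[\frg,\frg]$ in a solvable Lie algebra has nilpotent adjoint by Lie's theorem (or by Cartan's criterion, $\tr(\ad_{JA}^2)=-\|\ad_{JA}\|^2=0$ directly), and an operator that is both semisimple and nilpotent vanishes.

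The genuine gap is the step $JA\in[\frg,\frg]$, which you yourself flag as ``the main technical obstacle'' and then do not carry out---and this step is where the entire content of the lemma lives. The identity $g([V,W],JA)=\omega(V,W)$ only tells you that the orthogonal projection of $[\mathcal D,\mathcal D]$ onto $\R\,JA$ is surjective, i.e.\ that $JA\notin[\frg,\frg]^{\perp}$; this is strictly weaker than $JA\in[\frg,\frg]$, since every bracket $[V,W]$ comes contaminated with a $\mathcal D$-component $[V,W]_{\mathcal D}$ that you have no a priori control over (the fact that $[\cdot,\cdot]_{\frk}$ is itself a Lie bracket on $\frk$, and that one can cleanly split $\ker\theta=\R\,JA\oplus_\omega\frk$, is established in \cite{AO} only \emph{after} the centrality of $JA$ is known, so you cannot appeal to it here without circularity). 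Your proposed remedy---``use unimodularity, $\tr\ad_V=0$, to control the $\mathcal D$-components''---is a gesture, not an argument: unimodularity is a single scalar constraint per element and there is no indicated mechanism by which it isolates a pure multiple of $JA$ inside $[\mathcal D,\mathcal D]$. Until this step is supplied (and it is precisely the point at which both solvability and unimodularity must genuinely interact with the Vaisman identities), the proof of the first assertion does not close.
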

	
\medskip

The subspace $\ker \theta$ is in fact an ideal of $\frg$, since $\theta$ is closed, and $JA\in\ker\theta$. Denoting $\frk:=(\text{span}\{A,JA\})^\perp$ (which plays the role of $\mathcal{D}$ in Section 3), we have a decomposition 
\[\ker\theta=\R JA \stackrel{\perp}{\oplus}\mathfrak{k}. \]
For $x,y\in\frk$, we have that $[x,y]\in \ker\theta$ and it can be proved that 
\begin{equation}\label{ka}
		[x,y]=\omega(x,y)JA + [x,y]_\frk, 
\end{equation}
where $[x,y]_\frk$ is the component in $\frk$ of $[x,y]$.

\medskip
	
It follows from \cite{AO} that $[\cdot,\cdot]_\mathfrak{k}$ is a Lie bracket on $\frk$ and, moreover,  $(\mathfrak{k},[\cdot,\cdot]_\mathfrak{k},J|_\mathfrak{k},\pint|_\mathfrak{k})$ is a  K\"ahler Lie algebra. Therefore $\ker\theta$ is a $1$-dimensional central extension of $(\frk,[\cdot,\cdot]_\mathfrak{k})$: $\ker\theta=\R JA\oplus_\omega \mathfrak{k}$.
	
Moreover, since $\frg$ is unimodular we have that $\frk$ is unimodular as well. Due to a classical result of Hano \cite{Hano}, it follows that $\pint|_\frk$ is flat. The main result in \cite{AO} is:
	
\begin{theorem}\cite{AO}
If $(\frg,J,\pint)$ is Vaisman with $\frg$ unimodular and solvable, then:
\[ \frg=\R A\ltimes (\R JA\oplus_\omega \frk) , \]
where:
\begin{itemize}
	\item $\ad_A$ is a skew-symmetric derivation of $\ker\theta=\R JA\oplus_\omega \mathfrak{k}$ with $\ad_A(JA)=0$;
	\item $(\mathfrak{k},J_{\mathfrak k}, \pint|_\mathfrak{k})$ is a K\"ahler flat Lie algebra;
	\item $D:=\ad_A|_{\mathfrak k}$ is a skew-symmetric derivation of $(\mathfrak{k},\pint|_\mathfrak{k})$ which commutes with $J|_{\mathfrak k}$ (i.e. $D\in\mathfrak{u}(\mathfrak k)$).
		\end{itemize}
\end{theorem}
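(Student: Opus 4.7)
The plan is to build up the structure piece by piece from the data already extracted. First, because $\theta$ is closed, $\ker\theta$ is a codimension-one ideal of $\frg$, and since $A$ is orthogonal to $\ker\theta$ the decomposition $\frg=\R A\oplus\ker\theta$ realises $\frg$ as a semidirect product $\R A\ltimes\ker\theta$. By Lemma~\ref{lem-centro}, $JA\in\frz(\frg)$, so $\R JA$ is a central line in $\ker\theta$. Setting $\frk=(\operatorname{span}\{A,JA\})^\perp$ gives the orthogonal decomposition $\ker\theta=\R JA\oplus\frk$, and what has to be verified is the central-extension formula $[x,y]=\omega(x,y)JA+[x,y]_\frk$ for $x,y\in\frk$. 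Since $\ker\theta$ is an ideal, $[x,y]\in\ker\theta$, so only the $\R JA$-component $\alpha(x,y)=\langle[x,y],JA\rangle$ needs to be identified. I would obtain this by applying the LCK identity $d\omega=\theta\wedge\omega$ to the triple $(A,x,y)$: using the Chevalley--Eilenberg formula $d\omega(u,v,w)=-\omega([u,v],w)+\omega([u,w],v)-\omega([v,w],u)$ and invoking the skew-symmetry of $\ad_A$ together with $J\ad_A=\ad_A J$ from Proposition~\ref{propiedades-Vaisman}(c), the two terms $\omega([A,x],y)$ and $\omega([A,y],x)$ cancel, leaving $\alpha(x,y)=\omega(x,y)$.

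Next I would verify that $(\frk,[\cdot,\cdot]_\frk,J|_\frk,\pint|_\frk)$ is a K\"ahler flat Lie algebra. The Jacobi identity for $[\cdot,\cdot]_\frk$ is obtained by projecting Jacobi in $\frg$ onto the $\frk$-summand, and the integrability of $J|_\frk$ follows similarly by projecting the Lie-algebraic integrability condition. Closedness of $\omega|_\frk$ drops out because for $x,y,z\in\frk$ the right-hand side $(\theta\wedge\omega)(x,y,z)$ vanishes and the $\R JA$-components of the brackets do not contribute, since $\omega(JA,\cdot)|_\frk=0$. For flatness I would invoke Hano's theorem, so the key point is the unimodularity of $\frk$: in the adapted basis $\frg=\R A\oplus\R JA\oplus\frk$, for $x\in\frk$ the images $\ad_x(A)=-D(x)\in\frk$, $\ad_x(JA)=0$ and $\ad_x(y)=\omega(x,y)JA+[x,y]_\frk$ give a block form whose only diagonal contribution is $\tr(\ad^\frk_x)$, so unimodularity of $\frg$ forces $\tr(\ad^\frk_x)=0$ for all $x\in\frk$.

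Finally, the properties of $\ad_A$ and of $D=\ad_A|_\frk$ follow from what has been assembled: $\ad_A$ preserves the ideal $\ker\theta$, is skew-symmetric since $A$ is Killing, and annihilates $JA$ by Proposition~\ref{propiedades-Vaisman}(a); the combination of skew-symmetry with $\ad_A(A)=\ad_A(JA)=0$ forces $\ad_A(\frk)\subseteq\frk$, so $D$ is well-defined. The derivation property $D([x,y]_\frk)=[D(x),y]_\frk+[x,D(y)]_\frk$ comes from applying Jacobi $[A,[x,y]]=[[A,x],y]+[x,[A,y]]$ to $x,y\in\frk$ and taking the $\frk$-component; the companion $\R JA$-component gives $\omega(D(x),y)+\omega(x,D(y))=0$, which is automatic from $D\in\mathfrak{u}(\frk)$. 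Commutativity of $D$ with $J|_\frk$ is Proposition~\ref{propiedades-Vaisman}(c) restricted to $\frk$. I expect the bracket computation in the first paragraph to be the chief technical step, since it is the one place where the LCK hypothesis, skew-symmetry of $\ad_A$, and $J$-invariance of $\ad_A$ must all conspire simultaneously; the remaining assertions are essentially corollaries obtained by decomposing known identities on $\frg$ along the $\R A\oplus\R JA\oplus\frk$ splitting and invoking Hano's theorem at the end.
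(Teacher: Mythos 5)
Your proposal is correct and follows essentially the route the paper itself sketches around the statement (which it quotes from \cite{AO} without proof): the closedness of $\theta$ making $\ker\theta$ an ideal, Lemma~\ref{lem-centro} placing $JA$ in the center, the cocycle identity \eqref{ka} extracted from $d\omega(A,x,y)=(\theta\wedge\omega)(A,x,y)$ together with the skew-symmetry and $J$-equivariance of $\ad_A$, the unimodularity of $\frk$ inherited from that of $\frg$, and Hano's theorem for flatness. The details you supply for the cancellation $\omega([A,x],y)=\omega([A,y],x)$ and for $\tr(\ad_x|_\frk)=\tr(\ad_x)=0$ are exactly the missing verifications, so nothing further is needed.
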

	
	
	
\medskip

\begin{example}
{\rm In \cite{AO} many examples of unimodular solvable Lie algebras were provided. We recall here one such family of examples. Let us consider the Lie algebras $\frg$ with basis $\{ A,B, e_1,\dots,e_{2n-2}\}$ and Lie bracket given by 
\[
    [A,e_{2i-1}]=a_i e_{2i}, \quad [A,e_{2i}]=-a_i e_{2i-1}, \quad  [e_{2i-1},e_{2i}]=B, \quad i=1,\ldots, n-1, 
\]
for some $a_i\in \R$. Let $\pint$ denote the inner product on $\frg$ such that the basis above is orthonormal, and let $J$ denote the skew-symmetric complex structure on $\frg$ given by
\[ JA=B, \quad Je_{2i-1}=e_{2i}, \quad i=1,\ldots, n. \]
Then it is easy to verify that the Hermitian structure $(J,\pint)$ is Vaisman, where the Lee form $\theta$ is the metric dual of $A$: $\theta(\cdot)=\langle A,\cdot \, \rangle$. Note that $\ker \theta=\text{span}\{B, e_1,\dots,e_{2n-2} \}$ is isomorphic to the $(2n-1)$-dimensional Heisenberg Lie algebra $\frh_{2n-1}$ (so that $\frg=\R\ltimes \frh_{2n-1}$), and the subspace $\frk=\text{span}\{ e_1,\dots,e_{2n-2}\}$, equipped with the Lie bracket $[\cdot,\cdot]_\mathfrak{k}$, is an abelian Lie algebra (which is clearly a flat K\"ahler Lie algebra equipped with the restrictions of $(J,\pint)$.

It was also shown in \cite{AO} that whenever $a_i\in\Q$ for every $i$ the corresponding simply connected Lie group admits lattices. If $a_i=0$ for all $i$, then $\frg$ is the direct product $\frg=\R\times \frh_{2n-1}$, with the well-known Vaisman structure given in \cite{CFL}. 
}
\end{example}

\
	
We compute next the Bismut connection on unimodular solvable Lie algebras equipped with Vaisman structures, using Theorem \ref{nabla_b}. We denote by $\nabla^\frk$ the (flat) Levi-Civita connection on the K\"ahler Lie algebra $\frk$. Recall the skew-symmetric operator $\varphi$ defined in \eqref{tensor-phi}; it satisfies $\varphi(A)=\varphi(JA)=0$ and $\varphi(x)= Jx$ for $x\in\frk$.
	
	%
	
	
\begin{lemma}\label{bismut-g}
The Bismut connection $\nabla^b$ on $\frg$ is given as follows:
	\begin{itemize}
		\item $\nabla^bA=\nabla^b JA=0$,
		\item $\nabla^b_Ax=[A,x] \in \frk$ for any $x\in\frg$,
		\item $\nabla^b_{JA}x=-\varphi(x)$ for any $x\in\frg$,
		\item $\nabla^b_xy=\nabla^\frk_xy\in \frk$ for any $x,y\in\frk$.
	\end{itemize}
\end{lemma}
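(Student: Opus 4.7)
The plan is to derive each of the four bullets directly from Theorem~\ref{nabla_b}, exploiting the structural decomposition $\frg=\R A\ltimes(\R JA\oplus_\omega\frk)$ recalled above together with Lemma~\ref{lem-centro}. The first bullet is an immediate instance of the first item of Theorem~\ref{nabla_b}. For the second, Theorem~\ref{nabla_b} already gives $\nabla^b_Ax=[A,x]$, and the claim that $[A,x]\in\frk$ for every $x\in\frg$ reduces, upon writing $x=aA+bJA+v$ with $v\in\frk$, to the three facts $[A,A]=0$, $[A,JA]=0$ and $[A,v]=D(v)\in\frk$, all built into the structural description. For the third bullet, Theorem~\ref{nabla_b} yields $\nabla^b_{JA}x=[JA,x]-\varphi(x)$; Lemma~\ref{lem-centro} places $JA$ in the center of $\frg$, so $[JA,x]=0$ identically and only $-\varphi(x)$ remains.

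The substantive step is the fourth bullet, where I identify the Bismut connection on $\frk\times\frk$ with the Levi-Civita connection $\nabla^\frk$ of the K\"ahler Lie algebra $(\frk,[\cdot,\cdot]_\frk,\pint|_\frk)$. The strategy is to compute $\nabla^g_xy$ for $x,y\in\frk$ via the Koszul formula for left-invariant metrics and decompose the result along the orthogonal sum $\frg=\R A\oplus\R JA\oplus\frk$. Combining \eqref{ka}, which gives $[x,y]=\omega(x,y)JA+[x,y]_\frk$, with the centrality of $JA$ and the skew-symmetry of $D=\ad_A|_\frk$, the three components fall out cleanly: the $A$-component vanishes because $[x,y]\in\ker\theta$ and the remaining Koszul terms contribute $-\langle D(y),x\rangle-\langle D(x),y\rangle=0$; the $JA$-component equals $\tfrac12\omega(x,y)$, since the two Koszul terms involving $[x,JA]$ and $[y,JA]$ vanish and only $\tfrac12\langle[x,y],JA\rangle=\tfrac12\omega(x,y)$ survives; and the $\frk$-component reproduces $\nabla^\frk_xy$, because projecting onto $\frk$ replaces every bracket by its $\frk$-part and the resulting Koszul expression is exactly the one defining $\nabla^\frk_xy$. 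Therefore $\nabla^g_xy=\nabla^\frk_xy+\tfrac12\omega(x,y)JA$, and plugging this into the formula $\nabla^b_xy=\nabla^g_xy-\tfrac12\omega(x,y)JA$ of Theorem~\ref{nabla_b} delivers $\nabla^b_xy=\nabla^\frk_xy\in\frk$.

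No genuine obstacle is expected beyond carefully tracking which Koszul terms survive in each component of the orthogonal decomposition; once the centrality of $JA$ and the skew-symmetry of $D$ are in place, every cancellation is forced.
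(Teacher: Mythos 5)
Your proposal is correct and follows essentially the same route as the paper: the first three bullets are read off from Theorem~\ref{nabla_b} together with the centrality of $JA$, and the fourth is obtained by decomposing $\nabla^g_xy$ along $\R A\oplus\R JA\oplus\frk$, using \eqref{ka} to identify the $\frk$-component with $\nabla^\frk_xy$ and then cancelling the $\tfrac12\omega(x,y)JA$ term against \eqref{XYenD}. The only (immaterial) deviation is that you compute the $A$- and $JA$-components via the Koszul formula, where the paper invokes $\nabla^gA=0$ and \eqref{XYenD} directly; note also a harmless sign slip in your $A$-component, where the surviving Koszul terms are $+\langle D(y),x\rangle+\langle D(x),y\rangle$, which still vanishes by skew-symmetry of $D$.
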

	
\begin{proof}
The first three items follow directly from Theorem \ref{nabla_b}, recalling that $JA$ is a central element of $\frg$, due to Lemma \ref{lem-centro}. As for the fourth, we compute $\nabla^g_xy$ for $x,y\in\frk$. Since $\nabla^gA=0$, we have that 
\[ \langle\nabla^g_xy,A\rangle=-\langle y,\nabla^g_x A\rangle=0. \]
On the other hand, we know that $\nabla^b_xy\in \frk$ (Theorem \ref{nabla_b}) and it follows from \eqref{XYenD} that 
\[ \langle\nabla^g_xy,JA\rangle=\frac12 \omega(x,y).\] 
For $z\in\frk$, we have
\begin{align*}
	\langle\nabla^g_xy,z\rangle & = \frac12\{\langle [x,y],z\rangle -\langle [y,z],x\rangle+\langle [z,x],y\rangle\} \\
	& = \frac12\{\langle[x,y]_\frk,z\rangle-\langle [y,z]_\frk,x\rangle+\langle [z,x]_\frk,y\rangle\} \quad \text{(using \eqref{ka})}\\ 
	& = \langle \nabla^\frk_xy,z\rangle.
\end{align*}
Therefore, $\nabla^g_xy=\frac12 \omega(x,y)JA+\nabla^\frk_xy$ for any $x,y\in\frk$. Comparing with Theorem \ref{nabla_b} we obtain $\nabla^b_xy=\nabla^\frk_xy$, $x,y\in\frk$.
\end{proof}
	
\
	
Finally, we are able to compute the curvature tensor $R^b$ of the Bismut connection on $\frg$ in terms of the endomorphism $\varphi$. 

\begin{theorem}\label{curvature}
If $R^b$ denotes the curvature tensor of the Bismut connection, then $R^b$ is given by 
\[ R^b(u,v)=\langle \varphi( u),v\rangle \varphi, \qquad u,v\in\frg.\]
\end{theorem}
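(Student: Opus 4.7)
The plan is to exploit the explicit formulas of Lemma~\ref{bismut-g} together with the bracket decomposition \eqref{ka} and the flatness of $\nabla^\frk$. First I would reduce to the case $u,v\in\frk$. Both sides of the claimed identity are skew-symmetric in $(u,v)$; they both vanish when either argument lies in $\operatorname{span}\{A,JA\}$, the left-hand side by Lemma~\ref{curvatura-JJ}(d) and the right-hand side because $\varphi(A)=\varphi(JA)=0$. Moreover, for $u,v\in\frk$ one has $\theta(u)=J\theta(u)=0$, hence $\varphi(u)=Ju$ and
\[ \langle\varphi(u),v\rangle=\langle Ju,v\rangle=\omega(u,v), \]
so it suffices to prove $R^b(u,v)=\omega(u,v)\,\varphi$ for $u,v\in\frk$.

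Next, for $u,v\in\frk$ I would evaluate $R^b(u,v)w=\nabla^b_u\nabla^b_vw-\nabla^b_v\nabla^b_uw-\nabla^b_{[u,v]}w$ in three subcases. If $w=A$ or $w=JA$, then $\nabla^bA=\nabla^bJA=0$ forces each term to vanish, so $R^b(u,v)w=0=\omega(u,v)\varphi(w)$. If $w\in\frk$, Lemma~\ref{bismut-g} shows that $\nabla^b$ restricts to $\nabla^\frk$ on $\frk\times\frk$ and keeps us in $\frk$, so the iterated derivatives combine into $R^\frk(u,v)w$. Using \eqref{ka} I would write $[u,v]=\omega(u,v)\,JA+[u,v]_\frk$ and apply Lemma~\ref{bismut-g} to each piece,
\[ \nabla^b_{[u,v]}w=\omega(u,v)\,\nabla^b_{JA}w+\nabla^b_{[u,v]_\frk}w=-\omega(u,v)\,\varphi(w)+\nabla^\frk_{[u,v]_\frk}w. \]
Substituting and using that $\nabla^\frk$ is flat (as $\frk$ is a unimodular K\"ahler Lie algebra, by Hano's theorem), the $R^\frk(u,v)w$ contribution vanishes and I am left with $R^b(u,v)w=\omega(u,v)\,\varphi(w)$, as required.

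The argument is essentially bookkeeping and I do not foresee a substantive analytic obstacle. The one point to handle carefully is the decomposition of $[u,v]$ afforded by \eqref{ka}: the central component $\omega(u,v)\,JA$ is precisely what generates the curvature, through the nontrivial action $\nabla^b_{JA}=-\varphi$ given by Lemma~\ref{bismut-g}, while the $\frk$-component contributes nothing once the flatness of $\nabla^\frk$ is invoked. Thus the role of the Vaisman hypothesis (via $JA$ being central and Lee form parallel) and of unimodularity (via flatness of $\frk$) are cleanly separated in the computation.
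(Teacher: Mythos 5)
Your proposal is correct and follows essentially the same route as the paper: reduce to arguments in $\frk$ using Lemma~\ref{curvatura-JJ}(d) and the $\nabla^b$-parallelism of $A$ and $JA$, then split $[u,v]$ via \eqref{ka} so that the central component $\omega(u,v)JA$ produces the term $\omega(u,v)\varphi(w)$ through $\nabla^b_{JA}=-\varphi$, while the $\frk$-component is absorbed into $R^\frk$, which vanishes by Hano's flatness result. The only cosmetic difference is that you make explicit the identification $\langle\varphi(u),v\rangle=\omega(u,v)$ on $\frk$, which the paper leaves implicit.
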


\begin{proof}
Note first that $R^b(u,v)A=R^b(u,v)JA=0$, since both $A$ and $JA$ are $\nabla^b$-parallel. Therefore, we only have to compute $R^b(u,v)$ when evaluated in elements of $\frk$. 
		
Next, recall that $R^b(A,\cdot)=R^b(JA,\cdot)=0$, according to Lemma \ref{curvatura-JJ}(d). Thus, we only have to compute $R^b(x,y)z$ for $x,y,z\in\frk$. First note that, according to \eqref{ka}, 
		\[ \nabla^b_{[x,y]}z=\nabla^b_{[x,y]_\frk}z+\omega(x,y)\nabla^b_{JA}z = \nabla^b_{[x,y]_\frk}z -\omega(x,y)Jz=\nabla^\frk_{[x,y]_\frk}z -\omega(x,y)Jz, \]
where we have used Lemma \ref{bismut-g} in the last equality.	Hence we have that
		\begin{align*}
			R^b(x,y)z & = \nabla^b_x\nabla^b_y z-\nabla^b_y\nabla^b_x z - \nabla^b_{[x,y]}z    \\
			& = \nabla^\frk_x\nabla^\frk_y z-\nabla^\frk_y\nabla^\frk_x z-\nabla^\frk_{[x,y]_\frk}z +\omega(x,y)Jz \\
			& = R^\frk(x,y)z+\langle Jx,y\rangle Jz \\
			& = \langle Jx,y\rangle Jz
		\end{align*}
		since $\nabla^\frk$ is flat. The result follows.
	\end{proof}

	\medskip
	
\begin{corollary}\label{endo}
Any curvature endomorphism $R^b(u,v)$, $u,v\in\frg$, is parallel with respect to~$\nabla^b$.
\end{corollary}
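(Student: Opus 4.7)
The proof plan is essentially a one-line consequence of the explicit formula established in Theorem \ref{curvature} together with Proposition \ref{phi paralelo}, so I would organize it as follows.

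First, I would fix arbitrary $u,v\in\frg$ and view $u,v$ as left-invariant vector fields on the associated Lie group $G$. Under this identification, the inner product $\langle \varphi(u),v\rangle$ is a real constant, and Theorem \ref{curvature} expresses the curvature endomorphism as
\[
R^b(u,v)=\langle \varphi(u),v\rangle\,\varphi,
\]
that is, $R^b(u,v)$ is a constant scalar multiple of the globally defined $(1,1)$-tensor $\varphi$.

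Next, I would invoke Proposition \ref{phi paralelo}, which gives $\nabla^b \varphi=0$ on any Vaisman manifold. Since $\nabla^b$ is a linear connection and the scalar $\langle \varphi(u),v\rangle$ is a (point-independent) constant, for every $w\in\frg$ one obtains
\[
\nabla^b_w\bigl(R^b(u,v)\bigr)=\langle \varphi(u),v\rangle\,\nabla^b_w\varphi=0,
\]
which is exactly the claim that the endomorphism $R^b(u,v)$ is $\nabla^b$-parallel.

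There is essentially no obstacle here: the hard work was done in Theorem \ref{curvature}, where the curvature tensor was reduced to a rank-one expression in $\varphi$, and in Proposition \ref{phi paralelo}, where the parallelism of $\varphi$ was established. The only point worth a brief comment in the written proof is that $\langle \varphi(u),v\rangle$ should be regarded as a constant (not as a function of a point on $G$), which is automatic in the left-invariant setting we are working in throughout Section \ref{solvmanifold}.
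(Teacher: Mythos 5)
Your proof is correct and follows exactly the same route as the paper, which simply cites Theorem \ref{curvature} and Proposition \ref{phi paralelo} as a straightforward consequence. Your added remark that $\langle\varphi(u),v\rangle$ is a constant in the left-invariant setting is a fair (if implicit in the paper) point of care.
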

	
\begin{proof}
This is a straightforward consequence of Theorem \ref{curvature} and Corollary \ref{phi paralelo}.
\end{proof}
	
\
	
Regarding the holonomy group of the Bismut connection of a Vaisman solvmanifold, we have
	
\begin{theorem}\label{phi}
If $M=\Gamma\backslash G$ is a $2n$-dimensional Vaisman solvmanifold  then its holonomy group $\operatorname{Hol}^b(M)$ has dimension $1$ and it is not contained in $\operatorname{SU}(n)$.
\end{theorem}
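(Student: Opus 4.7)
The plan is to apply the Lie-algebraic Ambrose-Singer theorem (Theorem \ref{AS2}) on the simply connected cover $G$ of $M$: because the Vaisman structure and hence $\nabla^b$ are left invariant, the restricted holonomy $\operatorname{Hol}^b_0(M)$ equals the holonomy of $\nabla^b$ on $G$, and in particular $\dim \operatorname{Hol}^b(M) = \dim \mathfrak{hol}^b$. By Theorem \ref{curvature}, every curvature endomorphism has the form $R^b(u,v) = \langle\varphi(u),v\rangle\,\varphi$, so the collection of curvature endomorphisms is contained in the one-dimensional subspace $\R\varphi \subseteq \mathfrak{gl}(\frg)$. Choosing any unit vector $x \in \frk$ (which exists since $\dim_\R \frk = 2n-2 \geq 2$), the curvature element $R^b(x,Jx) = \langle Jx,Jx\rangle\varphi = \varphi$ already shows that $\varphi$ itself lies in the holonomy algebra.

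Next, I would invoke Proposition \ref{phi paralelo}, which asserts $\nabla^b\varphi = 0$, i.e.\ $[\nabla^b_u,\varphi] = 0$ for every $u \in \frg$. Thus the subspace $\R\varphi$ is (trivially) closed under commutators with the left multiplication operators $\nabla^b_u$, and it is an abelian subalgebra of $\mathfrak{gl}(\frg)$. By the minimality statement in Theorem \ref{AS2} we conclude $\mathfrak{hol}^b = \R\varphi$, which has dimension $1$, proving the first claim.

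For the second assertion, it suffices to show that $\varphi \notin \mathfrak{su}(n)$, since then the restricted holonomy $\{\exp(t\varphi):t\in\R\}$ already fails to lie in $\operatorname{SU}(n)$. The endomorphism $\varphi$ vanishes on $\operatorname{span}\{A,JA\}$ and equals $J$ on $\frk$, so it commutes with $J$ and is therefore $\C$-linear on $(T_pM,J)$. Under the inclusion $\operatorname{U}(n-1) \hookrightarrow \operatorname{U}(n)$ coming from Corollary \ref{hol}, with the distinguished complex line given by $\operatorname{span}_\R\{A,JA\}$ and the remaining $n-1$ complex directions by $\frk$, the matrix of $\varphi$ becomes $\operatorname{diag}(0,i,\ldots,i)$. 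Its complex trace equals $(n-1)i \neq 0$, hence $\varphi \notin \mathfrak{su}(n)$ and $\operatorname{Hol}^b(M)$ is not contained in $\operatorname{SU}(n)$.

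There is no real obstacle here beyond bookkeeping: all the ingredients (Theorem \ref{curvature}, Proposition \ref{phi paralelo}, the Ambrose-Singer theorem, and the explicit description of $\varphi$) are in place. The one point requiring care is the passage from the Lie algebra to the compact quotient: one should note that the statement on dimension refers to the Lie algebra of $\operatorname{Hol}^b(M)$, which coincides with that of $\operatorname{Hol}^b_0(M)$ and is therefore computed by Theorem \ref{AS2}, while non-inclusion in $\operatorname{SU}(n)$ for $\operatorname{Hol}^b(M)$ follows immediately from non-inclusion for the restricted holonomy subgroup.
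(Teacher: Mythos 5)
Your proof is correct and follows essentially the same route as the paper: both use Theorem \ref{curvature} together with the $\nabla^b$-parallelism of $\varphi$ (via Corollary \ref{endo}, itself a consequence of Proposition \ref{phi paralelo}) and the Ambrose--Singer theorem for left invariant connections to identify $\mathfrak{hol}^b=\R\varphi$, and then observe that $\varphi\in\mathfrak{u}(n)\setminus\mathfrak{su}(n)$. Your version is marginally more explicit in exhibiting a nonzero curvature endomorphism $R^b(x,Jx)=\varphi$ and in computing the complex trace $(n-1)i\neq 0$, where the paper displays the real matrix of $\varphi$ instead, but these are cosmetic differences.
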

	
\begin{proof}
The restricted holonomy group $\operatorname{Hol}^b_0(M)$ coincides with the holonomy group $\operatorname{Hol}^b(G)$. According to Theorem \ref{AS2}, its Lie algebra $\mathfrak{hol}^b(M)$ is generated by all the curvature endomorphisms $R^b(u,v)$, $u,v\in\frg$, together with their covariant derivatives of any order. Therefore, it follows from Theorem \ref{curvature} and Corollary \ref{endo} that $\mathfrak{hol}^b(M)$ is spanned by $\varphi$, therefore it is one-dimensional. 
		
Moreover, in an adapted basis $\{A,JA,e_1,f_1,\ldots,e_{n-1},f_{n-1}\}$ with $Je_i=f_i$, we have that the matrix of $\varphi$ is given by
		\[ \varphi=\left(\begin{array}{ccccccc}      
			0 & 0 &  &    &              &      &     \\
			0 & 0 &  &    &              &      &     \\
			&   &  0  & -1   &     &      &     \\
			&   &  1 & 0     &       &      &     \\
			&   &  &    &       \ddots &      &     \\
			&   &  &    &              &   0  & -1\\
			&   &  &    &              &  1 &  0  \\
		\end{array}\right)\in\mathfrak{u}(n),\]
		but it is clear that $\varphi\notin \mathfrak{su}(n)$. 
	\end{proof}
	
	
\ 

Moreover, a result stronger than Corollary \ref{endo} can be obtained also as a consequence of Theorem \ref{curvature}:
 
\begin{proposition}
On any Vaisman solvmanifold $\Gamma\backslash G$, the Bismut curvature tensor $R^b$ is $\nabla^b$-parallel: $\nabla^bR^b=0$.
\end{proposition}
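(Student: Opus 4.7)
The plan is to derive $\nabla^b R^b=0$ as an immediate consequence of the explicit formula in Theorem \ref{curvature} together with the parallelism of its building blocks. Since all objects under consideration ($R^b$, $\varphi$, and the inner product $\langle\cdot,\cdot\rangle$) are left invariant on $G$ and hence descend to tensor fields on $\Gamma\backslash G$, and since $\nabla^b$ is itself a left invariant connection, it suffices to verify $\nabla^b R^b=0$ when all arguments are taken to be left invariant vector fields, i.e. elements $x,y,z,w\in\frg$.

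The key observation is that the three ingredients appearing in the formula $R^b(u,v)w=\langle\varphi(u),v\rangle\,\varphi(w)$ from Theorem \ref{curvature} are all $\nabla^b$-parallel: the inner product $\langle\cdot,\cdot\rangle$ because $\nabla^b$ is metric, and the tensor $\varphi$ by Proposition \ref{phi paralelo}. The only non-trivial step is a routine application of the Leibniz rule. For $x\in\frg$, I would expand
\[
(\nabla^b_x R^b)(y,z)w=\nabla^b_x\!\bigl(R^b(y,z)w\bigr)-R^b(\nabla^b_x y,z)w-R^b(y,\nabla^b_x z)w-R^b(y,z)\nabla^b_x w,
\]
substitute the formula of Theorem \ref{curvature}, and use $\nabla^b\varphi=0$ together with $\nabla^b\langle\cdot,\cdot\rangle=0$ to get
\[
\nabla^b_x\!\bigl(\langle\varphi(y),z\rangle\varphi(w)\bigr)=\bigl(\langle\varphi(\nabla^b_x y),z\rangle+\langle\varphi(y),\nabla^b_x z\rangle\bigr)\varphi(w)+\langle\varphi(y),z\rangle\varphi(\nabla^b_x w).
\]
A direct comparison shows that the right-hand side is exactly $R^b(\nabla^b_x y,z)w+R^b(y,\nabla^b_x z)w+R^b(y,z)\nabla^b_x w$, so $(\nabla^b_x R^b)(y,z)w=0$.

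There is essentially no obstacle here; the work has already been done. The only point that might deserve a brief remark is the descent from $G$ to $\Gamma\backslash G$, which is automatic because both $R^b$ and $\nabla^b$ are left invariant, so the identity $\nabla^b R^b=0$ verified at $e\in G$ holds at every point of $\Gamma\backslash G$. It is also worth noting that this statement strictly strengthens Corollary \ref{endo}: the latter says each endomorphism $R^b(u,v)$ is $\nabla^b$-parallel (which corresponds to keeping the arguments $u,v$ fixed and differentiating only the output), while the present statement additionally controls what happens when the arguments themselves are differentiated, and this is precisely what the Leibniz computation above secures.
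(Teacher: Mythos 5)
Your proposal is correct and follows essentially the same route as the paper: both expand $(\nabla^b_x R^b)(y,z)w$ by the Leibniz rule, substitute $R^b(u,v)w=\langle\varphi(u),v\rangle\varphi(w)$ from Theorem \ref{curvature}, and cancel all terms using $\nabla^b\varphi=0$ together with the metric compatibility of $\nabla^b$. There is nothing missing.
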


\begin{proof}
This is an immediate consequence of Theorem \ref{curvature}. Indeed, we need only verify that $(\nabla^b_xR^b)(y,z)w=0$ for any $x,y,z,w\in\frg$. We compute
\begin{align*}
    (\nabla^b_xR^b)(y,z)w & = \nabla^b_x(R^b(y,z)w)-R^b(\nabla^b_xy,z)w-R^b(y,\nabla^b_xz)w-R^b(y,z)(\nabla^b_xw)\\
    & = \langle\varphi(y),z\rangle \nabla^b_x\varphi(w)-\langle \varphi \nabla^b_xy,z\rangle \varphi(w) - \langle \varphi(y),\nabla^b_xz\rangle \varphi(w) - \langle \varphi(y),z\rangle \varphi \nabla^b_xw.
\end{align*}
The first and the last terms cancel out since $\varphi$ is $\nabla^b$-parallel, and the second and third terms also cancel out, since 
\[ \langle \varphi \nabla^b_xy,z\rangle=\langle \nabla^b_x\varphi(y),z\rangle=-\langle \varphi(y),\nabla^b_x z \rangle. \] 
This completes the proof.
\end{proof}

\begin{remark} 
{\rm According to \cite{AP}, the Bismut connection on a Vaisman solvmanifold is a \textit{Hermitian Ambrose-Singer connection}, since $\nabla^bT^b=0$ and $\nabla^bR^b=0$. In particular, any Vaisman solvmanifold is a locally homogeneous Hermitian space \cite{Ki,Se}. However, this is true for any solvmanifold $M:=\Gamma\backslash G$ equipped with an invariant almost Hermitian structure $(J,g)$, since the connection on $G$ defined by $\nabla_x y=0$ for any $x,y\in\frg=\operatorname{Lie}(G)$ induces a connection $\nabla$ on $M$ satisfying $\nabla J=\nabla g=0$.
}
\end{remark}

\medskip

Concerning the Bismut Ricci curvature of a Vaisman solvmanifold, we have the following straightforward consequence of Theorem \ref{curvature}:

\begin{corollary}
The Bismut Ricci curvature of a Vaisman solvmanifold $\Gamma\backslash G$ is given by
\[ 
\operatorname{Ric}^b(u,v)=-\langle u,v\rangle+\theta(u)\theta(v)+\theta(Ju)\theta(Jv), \quad u,v\in\frg. \]
In particular, $\operatorname{Ric}^b\neq 0$.
\end{corollary}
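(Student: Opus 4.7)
The plan is to reduce everything to Theorem~\ref{curvature}, which expresses the full Bismut curvature on $\frg$ as the rank-one tensor $R^b(u,v)=\langle \varphi(u),v\rangle\,\varphi$. Because this formula is so explicit, computing the Bismut Ricci tensor amounts to taking the trace of a single rank-one endomorphism and then identifying $\varphi^{2}$ on $\frg$.

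First, I would start from $\operatorname{Ric}^b(u,v)=\operatorname{tr}\bigl(z\mapsto R^b(z,u)v\bigr)$ and substitute Theorem~\ref{curvature} to rewrite the endomorphism inside the trace as $z\mapsto \langle \varphi(z),u\rangle\,\varphi(v)$. Picking an orthonormal basis $\{e_i\}$ of $\frg$, the trace becomes
\[
 \operatorname{Ric}^b(u,v)=\sum_i \langle \varphi(e_i),u\rangle\,\langle \varphi(v),e_i\rangle.
\]
Since $\sum_i \langle \varphi(v),e_i\rangle\, e_i=\varphi(v)$ and $\varphi$ is linear, the right-hand side collapses to $\langle \varphi(\varphi(v)),u\rangle=\langle \varphi^{2}(v),u\rangle$.

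Next, I would compute $\varphi^{2}$ using the orthogonal decomposition $\frg=\R A\oplus\R JA\oplus\frk$. From the definition \eqref{tensor-phi}, $\varphi(A)=\varphi(JA)=0$ and $\varphi|_\frk=J|_\frk$; since $\frk$ is $J$-invariant, $\varphi^{2}|_\frk=-\operatorname{I}_\frk$. Writing $v=\theta(v)A+\langle JA,v\rangle JA + v_\frk$, and using $\langle JA,v\rangle=-\theta(Jv)$, I obtain
\[
 \varphi^{2}(v)= -v_\frk = -v+\theta(v)A-\theta(Jv)\,JA.
\]
Taking the inner product with $u$ and using $\langle A,u\rangle=\theta(u)$, $\langle JA,u\rangle=-\theta(Ju)$ gives exactly
\[
 \operatorname{Ric}^b(u,v)=-\langle u,v\rangle+\theta(u)\theta(v)+\theta(Ju)\theta(Jv),
\]
as claimed. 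For non-vanishing, it suffices to pick any unit vector $e\in\frk$: then $\theta(e)=\theta(Je)=0$, so $\operatorname{Ric}^b(e,e)=-1\neq 0$.

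The only even mildly delicate step is keeping track of signs in the identification $\langle JA,v\rangle=-\theta(Jv)$ when converting $\varphi^{2}(v)$ into the form involving $\theta$ and $\theta\circ J$; everything else is a bookkeeping reduction from Theorem~\ref{curvature}. So I do not expect a genuine obstacle here, and the corollary follows in a few lines once the trace calculation and the formula for $\varphi^{2}$ are in place.
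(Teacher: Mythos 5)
Your proposal is correct and follows exactly the route the paper intends: the paper states this corollary as a ``straightforward consequence'' of Theorem~\ref{curvature}, and your computation --- substituting $R^b(z,u)v=\langle\varphi(z),u\rangle\varphi(v)$ into the trace, collapsing it to $\langle\varphi^2(v),u\rangle$, and evaluating $\varphi^2=-\operatorname{I}$ on $\frk$ and $0$ on $\operatorname{span}\{A,JA\}$ --- is precisely the omitted calculation, with all signs handled correctly. The result also agrees with the paper's independent derivation via \eqref{ricb-ricg}, which is a good consistency check.
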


\smallskip

Using \eqref{ricb-ricg} we are able to determine the Riemannian Ricci curvature of a $2n$-dimensional Vaisman solvmanifold $\Gamma\backslash G$:
\[ \operatorname{Ric}^g(u,v)=-\frac12 \langle u,v\rangle+\frac12 \theta(u)\theta(v)+\frac{n}{2}\theta(Ju)\theta(Jv), \quad u,v\in\frg.\]

\

For a general non-Vaisman LCK solvmanifold, we cannot expect a reduction of the holonomy of the Bismut connection, as the following example shows. 

\begin{example}\label{example}
{\rm 
Let $G$ be the simply connected solvable Lie group with Lie algebra $\frg$ generated by $\{e_1,e_2,e_3,e_4\}$ with non-zero brackets given by
	\[ [e_1,e_2]=\mu e_2, \quad  [e_1,e_3]=-\frac{\mu}{2}e_3+y e_4, \quad [e_1,e_4]=-ye_3-\frac{\mu}{2}e_4,   \]
for some $\mu\neq 0$ and $y\in\R$. Note that $G$ is an almost abelian Lie group; it was proved in \cite{AO1} that for certain values of $\mu$ and $y$ the Lie group $G$ admits lattices. The associated solvmanifolds are Inoue surfaces of type $S^0$.
			
Consider on $\frg$ the inner product $\pint$ such that the basis above is orthonormal and the endomorphism $J:\frg\to\frg$ given by $Je_1=e_2,\, Je_3=e_4,\, J^2=-\operatorname{Id}$. It is easy to verify that the almost complex structure $J$ is integrable and hence $(J,\pint)$ determines a Hermitian structure on $\frg$  with associated fundamental $2$-form $\omega$ given by $\omega=e^{12}+e^{34}$. Here, $\{e^1, e^2, e^3, e^4\}$ is the dual basis of $\{e_1, e_2, e_3, e_4\}$ and $e^{ij}$ stands for the wedge product $e^i\wedge e^j$. Note that $d\omega=\mu e^1\wedge \omega$, which means that $(J,\pint)$ is LCK since $\mu\neq 0$ and $de^1=0$. Clearly, the Lee form $\theta$ is $\theta=\mu e^1$. 
			
Computing the Bismut connection on $\frg$ using \eqref{bismut}, we obtain
	\begin{gather*}
		\nabla^b_{e_1}= \begin{pmatrix} \\ \\ && 0& -y \\ && y & 0 \end{pmatrix}, \qquad \nabla^b_{e_2}= \begin{pmatrix} 0 &\mu && \\ -\mu & 0 &&\\ && 0& \frac{\mu}{2} \\ && -\frac{\mu}{2} & 0 \end{pmatrix},  \\ 
		\nabla^b_{e_3}= \begin{pmatrix} && -\frac{\mu}{2} & 0 \\ &&0&-\frac{\mu}{2}\\ \frac{\mu}{2}&0 & & \\ 0 & \frac{\mu}{2} &  &  \end{pmatrix} 
		\qquad \nabla^b_{e_4}= \begin{pmatrix}  &&0&-\frac{\mu}{2}\\  &&\frac{\mu}{2} &0  \\ 0& -\frac{\mu}{2} & & \\ \frac{\mu}{2}&0 & & \end{pmatrix}.
    \end{gather*}
Note that $\nabla^b \theta\neq 0$, so that the LCK metric is not Vaisman. The curvature endomorphisms $R^b(e_i,e_j)$ are given by
\begin{gather*}
		R^b(e_1,e_2)= \begin{pmatrix} 0 & -\mu^2 &&\\ \mu^2 & 0 && \\ && 0 & -\frac{\mu^2}{2} \\ && \frac{\mu^2}{2} & 0 \end{pmatrix}, \quad 
		R^b(e_1,e_3)=-R^b(e_2,e_4)= \begin{pmatrix} && -\frac{\mu^2}{4} & 0 \\ && 0 &-\frac{\mu^2}{4}\\ \frac{\mu^2}{4}& 0 & & \\ 0 & \frac{\mu^2}{4} &  &  \end{pmatrix}   \\ 
		R^b(e_1,e_4)=R^b(e_2,e_3)=
		\begin{pmatrix}  && 0 &-\frac{\mu^2}{4}\\  &&\frac{\mu^2}{4} & 0 \\ 0 & -\frac{\mu^2}{4} & & \\ \frac{\mu^2}{4}& 0 & & \end{pmatrix},
		\quad R^b(e_3,e_4)=\begin{pmatrix}  0 &\frac{\mu^2}{2} && \\ -\frac{\mu^2}{2} & 0 &&\\ && 0 & -\frac{\mu^2}{2} \\ && \frac{\mu^2}{2} & 0 \end{pmatrix}.
    \end{gather*}
Since $\mu\neq 0$, it follows easily that all these curvature endomorphisms are linearly independent, hence the subspace $\text{span}\{R^b(e_i,e_j)\mid i<j \}$ of $\mathfrak{u}(2)$ has dimension $4$. According to Theorem \ref{AS2}, we have that $\mathfrak{hol}^b=\mathfrak{u}(2)$. There is no reduction of the holonomy in this case.

\medskip

This example will be generalized in Section \ref{section-OT}. 
}
\end{example}
	
\
	
\section{Bismut holonomy of Hopf manifolds}\label{Hopf}

In this section we determine explicitly the holonomy group of the Bismut connection on some classical \textit{Hopf manifolds}, which are the archetypical examples of compact Vaisman manifolds. These are defined as a quotient of $\C^n-\{0\}$, $n\geq 2$, by the action of the cyclic group generated by the transformation $z\mapsto \lambda z$, for some $\lambda \in\C$ with $|\lambda|>1$. Their underlying smooth manifold is $S^1\times S^{2n-1}$, so that its first Betti number is $b_1=1$ and therefore they cannot admit a K\"ahler metric. We also point out that for different values of $\lambda$ the corresponding compact complex manifolds are non-biholomorphic (this can be seen with the same arguments used in \cite{KS} for the case of Hopf surfaces, i.e., for $n=2$).

\smallskip

We will describe in more detail their construction for $\lambda\in\R$, $\lambda>1$, exhibiting in this case an explicit parallelization of $S^1\times S^{2n-1}$, generalizing the one given in \cite{Par1} (and in greater length in \cite{Par2}) for $\lambda=e^{2\pi}$. It will be easy to express the usual Vaisman structure in terms of this parallelization, and using this expression we will be able to show that the associated Bismut holonomy group is equal to $\operatorname{U}(n-1)$, which is the largest possible holonomy group, according to Corollary~\ref{hol}.

\smallskip

\begin{remark}
{\rm These classical Hopf manifolds are not homeomorphic to a solvmanifold for $n>1$. Indeed, the universal cover of a $2n$-dimensional solvmanifold is homeomorphic to $\R^{2n}$, whereas the universal cover of $S^1\times S^{2n-1}$ is $\R^{2n}-\{0\}$. } 
\end{remark}

\

\subsection{Revisiting the construction of a Vaisman structure on Hopf manifolds}
Let us consider $\R^{2n}$ with the usual Cartesian coordinates $(x_1,\ldots,x_{2n})$. Let us denote by $N$ the unit normal vector field of $S^{2n-1}\subset \R^{2n}$, that is, $N=\sum_{i=1}^{2n} x_i\frac{\partial}{\partial x_i}$. For any $i=1,\ldots,2n$, the orthogonal projection of $\frac{\partial}{\partial x_i}$ on $S^{2n-1}$ gives a vector field $T_i$ on $S^{2n-1}$, which can be expressed as $T_i=\frac{\partial}{\partial x_i}-x_i N$. The vector field $T_i$ is called the $i^{th}$ meridian vector field on $S^{2n-1}$.
	
Let $\lambda$ be a real number, $\lambda>1$. Let $\Gamma_\lambda$ be the cyclic infinite group of transformations of $\R^{2n}-\{0\}$ generated by the map $x \mapsto \lambda x$. Then the projection $p_\lambda:\R^{2n}-\{0\} \to S^1\times S^{2n-1}$ given by \[ p_\lambda(x)=\left(\exp\left(2\pi i\,\frac{\log|x|}{\log \lambda}\right),\frac{x}{|x|}\right) \]
induces a diffeomorphism between $(\R^{2n}-\{0\})/\Gamma_\lambda$ and $S^1\times S^{2n-1}$. Consider the smooth function $F:\R^{2n}-\{0\}\to \R$ given by $F(x)=|x|$. Then the vector fields $F\frac{\partial}{\partial x_i}$, $i=1,\ldots,2n$, are $\Gamma_\lambda$-invariant\footnote{Here we are using $\lambda >0$, this does not hold for general $\lambda\in \C$.} and therefore the vector fields $U_i^\lambda$ on $S^1\times S^{2n-1}$ given by $U_i^\lambda=(p_\lambda)_\ast(F\frac{\partial}{\partial x_i})$ are well-defined and, moreover, $\{U_1^\lambda,\ldots,U_{2n}^\lambda\}$ defines a parallelization of $S^1\times S^{2n-1}$. In terms of the meridian vector fields $T_i$, it can be shown, modifying suitably the computations in \cite{Par2}, that $U_i^\lambda=T_i+\frac{2\pi}{\log \lambda}x_i\frac{\partial}{\partial t}$, where $t$ denotes the usual coordinate on $S^1$. It follows that 
\begin{gather*}
[U_i^\lambda,U_j^\lambda]= x_iU_j^\lambda-x_jU_i^\lambda, \\
U_i^\lambda(x_j)=\delta_{ij}-x_ix_j,
\end{gather*}
for $i,j=1,\ldots,2n$. Here we are considering the functions $x_j\in C^\infty(S^1\times S^{2n-1})$ defined as $x_j(e^{it},(p_1,\ldots, p_{2n}))=p_j$, for $j=1,\ldots, 2n$ and $(p_1,\ldots,p_{2n})\in S^{2n-1}$. In particular, $\sum_{j=1}^{2n} x_j^2=1$. Observe that differentiating this expression we get
\begin{equation}\label{dxj}
    \sum_{j=1}^{2n} x_j dx_j=0.
\end{equation}
	
If $\pint$ denotes the usual metric on $\R^{2n}$, then the Riemannian metric $\frac{1}{F^2}\langle \cdot,\cdot\rangle$ on $\R^{2n}-\{0\}$ is $\Gamma_\lambda$-invariant and then it induces a Riemannian metric $g_\lambda$ on $S^1\times S^{2n-1}$. It can be easily seen that $g_\lambda$ coincides with the product metric $g_\lambda=\left(\frac{\log\lambda}{2\pi}\right)^2g_{S^1}+g_{S^{2n-1}}$, where $g_{S^k}$ denotes the round metric on $S^k$.
	
On $\R^{2n}-\{0\}$ there is the canonical complex structure given by
\[  J\left(\frac{\partial}{\partial x_{2i-1}}\right) = \frac{\partial}{\partial x_{2i}},\quad J\left(\frac{\partial}{\partial x_{2i}}\right) = -\frac{\partial}{\partial x_{2i-1}}, \quad i=1,\ldots,n. \]
This complex structure is $\Gamma_\lambda$-invariant and therefore defines a complex structure $J_\lambda$ on $S^1\times S^{2n-1}$ given by
\[	J_\lambda U_{2i-1}^\lambda=U_{2i}^\lambda, \quad J_\lambda U_{2i}^\lambda=-U_{2i-1}^\lambda,\quad i=1,\ldots,n. \]
It is clear that $J_\lambda$ is $g_\lambda$-orthogonal and therefore $(J_\lambda,g_\lambda)$ is a Hermitian structure on $S^1\times S^{2n-1}$ for any $\lambda>1$.
	
Let $\eta_i^\lambda$ denote the $1$-form on $S^1\times S^{2n-1}$ which is $g_{\lambda}$-dual to $U_i^\lambda$. Then $\eta_i^\lambda=dx_i+\frac{\log\lambda}{2\pi}x_i dt$ and it can be seen that $d\eta_i^\lambda=\eta_i^\lambda\wedge\alpha^\lambda$, where $\alpha^\lambda$ is the $1$-form defined by $\alpha^\lambda:=\sum_j x_j\eta_j^\lambda$. Note that applying \eqref{dxj} we obtain  $\alpha^\lambda=\frac{\log\lambda}{2\pi}dt$. 


\

We summarize all the equations we have obtained so far in the following lemma: 
	
\begin{lemma}\label{lema-hopf}
The manifold $S^1\times S^{2n-1}$ admits a family of Hermitian structures $(J_{\lambda}, g_{\lambda})$ for $\lambda>1$.  In terms of the $g_\lambda$-orthonormal global frame $\{U_1^\lambda,\ldots,U_{2n}^\lambda\}$ and functions $x_i\in C^\infty(S^1\times S^{2n-1})$, $i=1,\ldots,2n$, described above, we have:
\begin{align*}
J_\lambda U_{2i-1}^\lambda& =U_{2i}^\lambda, \quad  J_\lambda U_{2i}^\lambda=-U_{2i-1}^\lambda \text{ for all } i,\\
[U_i^\lambda,U_j^\lambda] &= x_iU_j^\lambda-x_jU_i^\lambda, \quad
U_i^\lambda(x_j)  =\delta_{ij}-x_ix_j,
\end{align*}
and 
\[ d\eta_i^\lambda  =\eta_i^\lambda\wedge \alpha^\lambda,\] 
where  $\eta_i^{\lambda}$ is the form $g_{\lambda}$-dual to $U_i^{\lambda}$ and $\alpha^\lambda:=\sum_j x_j\eta_j^\lambda$.
\end{lemma}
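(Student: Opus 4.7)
The plan is to transfer every identity to the universal cover $\R^{2n}\setminus\{0\}$ and reduce it to a Euclidean calculation. The key dictionary is: $(p_\lambda)_\ast(F\,\partial/\partial x_i) = U_i^\lambda$ by construction; the function $x_j$ on $S^1\times S^{2n-1}$ pulls back under $p_\lambda$ to $x_j/F$ on the cover (where $F=|x|$); and the coordinate $t$ on $S^1$ pulls back to $\frac{2\pi}{\log\lambda}\log F$. Since the standard complex structure $J$ on $\R^{2n}$ is $\Gamma_\lambda$-invariant and satisfies $J(\partial/\partial x_{2i-1}) = \partial/\partial x_{2i}$, the formulas $J_\lambda U_{2i-1}^\lambda = U_{2i}^\lambda$ and $J_\lambda U_{2i}^\lambda = -U_{2i-1}^\lambda$ follow immediately by pushing forward.

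For the Lie brackets and the derivations $U_i^\lambda(x_j)$, I would compute on $\R^{2n}\setminus\{0\}$. Using $\partial_i F = x_i/F$, a one-line calculation yields
\[
[F\,\partial_i,\,F\,\partial_j] = F(\partial_i F)\,\partial_j - F(\partial_j F)\,\partial_i = x_i\,\partial_j - x_j\,\partial_i = \tfrac{x_i}{F}(F\,\partial_j) - \tfrac{x_j}{F}(F\,\partial_i),
\]
which pushes forward via $p_\lambda$ to $x_iU_j^\lambda - x_j U_i^\lambda$, in view of the dictionary above. Similarly, $(F\,\partial_i)(x_j/F) = \delta_{ij} - x_ix_j/F^2$, whose push-down gives $U_i^\lambda(x_j) = \delta_{ij} - x_i x_j$.

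For the $1$-forms, I would first verify the explicit formula $\eta_i^\lambda = dx_i + \frac{\log\lambda}{2\pi}\,x_i\,dt$. Using the expression $U_i^\lambda = T_i + \frac{2\pi}{\log\lambda}\,x_i\,\partial_t$ recorded in the text and the product form of $g_\lambda$, a direct check gives $g_\lambda(U_i^\lambda, U_j^\lambda) = (\delta_{ij} - x_i x_j) + x_i x_j = \delta_{ij}$, confirming the duality. Exterior differentiation then yields $d\eta_i^\lambda = \tfrac{\log\lambda}{2\pi}\,dx_i\wedge dt$. For the form $\alpha^\lambda = \sum_j x_j\eta_j^\lambda$, expanding gives $\sum_j x_j\,dx_j + \tfrac{\log\lambda}{2\pi}\bigl(\sum_j x_j^2\bigr)dt$, which collapses to $\tfrac{\log\lambda}{2\pi}\,dt$ by $\sum_j x_j^2 = 1$ and \eqref{dxj}. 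The desired identity $d\eta_i^\lambda = \eta_i^\lambda \wedge \alpha^\lambda$ is then immediate from $dt\wedge dt = 0$.

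The only real obstacle is setting up the dictionary between cover and quotient carefully; once one fixes that $x_j$ downstairs corresponds to $x_j/F$ upstairs and that $t$ corresponds to $\tfrac{2\pi}{\log\lambda}\log F$, no item in the lemma is deeper than the spherical constraint $\sum_j x_j^2 = 1$, and every statement reduces to a short calculation.
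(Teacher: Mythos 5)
Your proposal is correct and follows essentially the same route as the paper: the lemma is just a summary of the facts established via the explicit parallelization $U_i^\lambda=(p_\lambda)_\ast(F\,\partial/\partial x_i)$, and your strategy of lifting every identity to $\R^{2n}\setminus\{0\}$ (with the dictionary $x_j \leftrightarrow x_j/F$) and reducing to Euclidean computations is precisely how these formulas are obtained, the paper deferring the analogous details to Parton's computations. All of your individual calculations (the bracket, $U_i^\lambda(x_j)$, the orthonormality check, and $d\eta_i^\lambda=\eta_i^\lambda\wedge\alpha^\lambda$ via $\alpha^\lambda=\tfrac{\log\lambda}{2\pi}\,dt$) check out.
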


\medskip

Note that these expressions do not actually depend on $\lambda$, so that from now on we will omit the subscript/superscript $\lambda$ in all the forthcoming computations. 

\
	
We will verify next the well-known fact that the previous Hermitian structure $(J,g)$ is Vaisman. Indeed, computing $d\omega$ for $\omega:=g(J\cdot,\cdot)=\sum_i \eta_{2i-1}\wedge\eta_{2i}$ and applying \eqref{dxj}, we obtain that 
\[ d\omega=-2\alpha\wedge\omega,\quad d\alpha=0,\]
so that $(J,g)$ is LCK with Lee form $\theta=-2\alpha$. The vector field $H$ on $S^1\times S^{2n-1}$ which is $g$-dual to $\alpha$ will play an important role in our computations. It can be written in terms of the frame $\{U_i\}$ as
\[ H=\sum_{i=1}^{2n} x_i U_i,\]  
and coincides with $\frac{2\pi}{\log \lambda}\frac{\partial}{\partial t}$. Observe that $H$ is a multiple of the vector field $A$ defined in previous sections for any Vaisman manifold; more precisely, $H = -A/2$. In order to show that $\alpha$, or equivalently $H$, is $\nabla^g$-parallel, we determine the Levi-Civita connection $\nabla ^g$ of $g$ in terms of $\{U_i\}$. Using the Koszul formula together with Lemma \ref{lema-hopf} we obtain
	
\begin{lemma}\label{nabla_g}
The Levi-Civita connection $\nabla^g$ on $S^1\times S^{2n-1}$ is given by
\begin{itemize}
	\item $\nabla^g_{U_i}U_j=-x_j U_i$, if $i\neq j$;
	\item $\nabla^g_{U_i}U_i= \sum_{k\neq i}x_kU_k=H-x_iU_i$.
\end{itemize}
\end{lemma}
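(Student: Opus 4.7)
The plan is to apply the Koszul formula directly in the $g$-orthonormal frame $\{U_1,\ldots,U_{2n}\}$. The key simplification is that since the frame is $g$-orthonormal, every function $g(U_a,U_b)=\delta_{ab}$ is constant on $S^1\times S^{2n-1}$, so all the derivative terms in the Koszul formula vanish and only the three bracket terms survive.

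Concretely, I would specialize
\[ 2g(\nabla^g_X Y,Z)=Xg(Y,Z)+Yg(X,Z)-Zg(X,Y)+g([X,Y],Z)-g([Y,Z],X)+g([Z,X],Y) \]
to $X=U_i$, $Y=U_j$, $Z=U_k$, obtaining
\[ 2g(\nabla^g_{U_i}U_j,U_k)=g([U_i,U_j],U_k)-g([U_j,U_k],U_i)+g([U_k,U_i],U_j). \]
Next I would substitute the bracket relation $[U_a,U_b]=x_a U_b - x_b U_a$ from Lemma \ref{lema-hopf}. Using orthonormality again to evaluate the inner products, this yields
\[ 2g(\nabla^g_{U_i}U_j,U_k)=(x_i\delta_{jk}-x_j\delta_{ik})-(x_j\delta_{ki}-x_k\delta_{ji})+(x_k\delta_{ij}-x_i\delta_{kj})=-2x_j\delta_{ik}+2x_k\delta_{ij}, \]
so that $g(\nabla^g_{U_i}U_j,U_k)=-x_j\delta_{ik}+x_k\delta_{ij}$.

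Finally I would split into the two cases. If $i\neq j$, then $\delta_{ij}=0$ gives $g(\nabla^g_{U_i}U_j,U_k)=-x_j\delta_{ik}$, hence $\nabla^g_{U_i}U_j=-x_j U_i$. If $i=j$, then $g(\nabla^g_{U_i}U_i,U_k)=x_k-x_i\delta_{ik}$, which is $0$ when $k=i$ and $x_k$ when $k\neq i$, so $\nabla^g_{U_i}U_i=\sum_{k\neq i}x_k U_k = H - x_i U_i$ using the definition $H=\sum_j x_j U_j$. I do not anticipate any real obstacle: the argument is entirely mechanical once the Koszul formula is stripped of its (vanishing) derivative terms.
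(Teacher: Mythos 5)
Your computation is correct and follows exactly the route the paper indicates (Koszul formula plus the bracket relations and orthonormality of the frame from Lemma~\ref{lema-hopf}); the algebra checks out, including the cancellation $-x_i\delta_{ik}+x_k=0$ in the case $k=i=j$.
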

	
\smallskip
	
Therefore,
\begin{align*}
	\nabla^g_{U_i}H & =\nabla^g_{U_i}(\sum_j x_jU_j) \\
	& = \sum_j(U_i(x_j)U_j+x_j\nabla^g_{U_i}U_j) \\
	& = (1-x_i^2)U_i+x_i(H-x_iU_i)+\sum_{j\neq i} (-x_ix_jU_j+x_j(-x_j)U_i)\\
	& = (1-x_i^2)U_i+x_i(H-x_iU_i)-x_i(H-x_iU_i)-(\sum_{j\neq i}x_j^2) U_i\\
	&= 0 \qquad (\text{since} \sum_j x_j^2=1).
\end{align*}
Thus we recover the fact that $S^1\times S^{2n-1}$ with the Hermitian structure $(J,g)$ is a Vaisman manifold.

\

\subsection{Computation of the Bismut holonomy of Hopf manifolds}
In what follows, we will study the Bismut connection $\nabla^b$ associated to $(J,g)$. We will first express $\nabla^b$ in terms of the frame $\{U_i\}$, and later we will determine its curvature tensor $R^b$ and its holonomy group. 
	
\begin{proposition}
The Bismut connection $\nabla^b$ associated to $(J,g)$ on $S^1\times S^{2n-1}$ is given by
\begin{itemize}
	\item $\nabla^b_{U_{2i-1}}U_{2j-1}=-x_{2j-1}U_{2i-1}+x_{2j}U_{2i}-x_{2i}U_{2j}$, $(i\neq j)$;
	\item $\nabla^b_{U_{2i-1}}U_{2i-1}=\sum_{k\neq 2i-1}x_kU_k=H-x_{2i-1}U_{2i-1}$;
	\item $\nabla^b_{U_{2i-1}}U_{2j}= -x_{2j}U_{2i-1}+x_{2i}U_{2j-1}-x_{2j-1}U_{2i}$, $(i\neq j)$;
	\item $\nabla^b_{U_{2i-1}}U_{2i}=-\sum_{k}x_{2k}U_{2k-1}+\sum_{k\neq i}x_{2k-1}U_{2k}=JH-x_{2i-1}U_{2i}$;
	\item $\nabla^b_{U_{2i}}U_{2j-1}=-x_{2j}U_{2i-1}-x_{2j-1}U_{2i}+x_{2i-1}U_{2j}$, $(i\neq j)$;
	\item $\nabla^b_{U_{2i}}U_{2i-1}=-\sum_{k}x_{2k-1}U_{2k}+\sum_{k\neq i}x_{2k}U_{2k-1}=-JH-x_{2i}U_{2i-1}$;
	\item $\nabla^b_{U_{2i}}U_{2j}=-x_{2i-1}U_{2j-1}+x_{2j-1}U_{2i-1}-x_{2j}U_{2i}$, $(i\neq j)$;
	\item $\nabla^b_{U_{2i}}U_{2i}=\sum_{k\neq 2i}x_{k}U_{k}=H-x_{2i}U_{2i}$.
\end{itemize}
\end{proposition}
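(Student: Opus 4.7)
The plan is to apply formula \eqref{nablab-formula} directly to the global frame $\{U_i\}$, since the Levi-Civita pieces $\nabla^g_{U_i}U_j$ are already supplied by Lemma \ref{nabla_g}. The only extra ingredients needed are the values of $\theta$, $J$, $\omega$, and $JA$ on this frame.

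First, from $\theta = -2\alpha = -2\sum_j x_j\,\eta_j$ one reads off $\theta(U_i) = -2x_i$, and the conventions $JU_{2i-1} = U_{2i}$, $JU_{2i} = -U_{2i-1}$ give $\theta(JU_{2i-1}) = -2x_{2i}$ and $\theta(JU_{2i}) = 2x_{2i-1}$. Since $A$ is $g$-dual to $\theta$ while $H$ is $g$-dual to $\alpha$, one has $A = -2H$, so
\[
JA = -2\,JH = 2\sum_k x_{2k}\,U_{2k-1} - 2\sum_k x_{2k-1}\,U_{2k}.
\]
Finally, $\omega(U_{2i-1},U_{2j}) = \delta_{ij}$, while $\omega$ vanishes on pairs of equal parity.

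With these ingredients in place, I would substitute into
\[
\nabla^b_X Y = \nabla^g_X Y + \tfrac12 \bigl(\theta(JX)JY - \theta(JY)JX - \omega(X,Y)JA\bigr)
\]
and split into cases according to the parity of the indices of $X$ and $Y$. When $Y$ is not $\pm JX$, the coefficient $\omega(X,Y)$ vanishes and so does the $JA$ contribution; the first, third, fifth, and seventh bullets then follow immediately from Lemma \ref{nabla_g} combined with the computed values of $\theta\circ J$. When $Y = \pm JX$, the term $-\tfrac12\omega(X,Y)JA$ contributes $\pm JH$ (the factor $-2$ implicit in $A = -2H$ exactly cancels the $\tfrac12$), and assembling the resulting pieces, together with $H = \sum_j x_j U_j$ and $\sum_j x_j^2 = 1$, yields the compact rewritings $H - x_i U_i$ and $\pm JH - x_i U_i$ in the remaining four bullets.

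There is no serious obstacle: the argument is purely computational. The only care required is the index bookkeeping between odd- and even-indexed frame vectors, and tracking the sign arising from the relation $A = -2H$ in order to decide whether the $JH$ contribution appears with a plus or minus in each of the four diagonal cases.
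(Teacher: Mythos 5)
Your proposal is correct and follows essentially the same route as the paper: both reduce the computation to Lemma \ref{nabla_g} plus the torsion correction coming from $d\omega=-2\alpha\wedge\omega$, the only cosmetic difference being that you substitute into the already-simplified formula \eqref{nablab-formula} while the paper works from \eqref{bismut} directly (and shortens the case analysis via $\nabla^b_XU_{2r}=J(\nabla^b_XU_{2r-1})$). Your bookkeeping of $\theta=-2\alpha$, $A=-2H$, and the resulting cancellation of the factor $\tfrac12$ against $-2$ in the $JA$ term is exactly right.
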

	
\begin{proof}
The proof follows from \eqref{bismut}, using Lemma \ref{nabla_g} and the fact that $d\omega=-2\alpha\wedge \omega$, where $\alpha=\sum x_j\eta_j, \, \omega=\sum \eta_{2j-1}\wedge \eta_{2j}$. We prove only the first two expressions, the others follow analogously. Also take into account that $\nabla^b_X U_{2r} = J(\nabla^b_X U_{2r-1})$ since $J$ is $\nabla^b$-parallel.
		
\medskip
		
For $i\neq j$:
\begin{align*}
	g(\nabla^b_{U_{2i-1}}U_{2j-1},U_{2k-1}) & = -x_{2j-1}\delta_{ik}-\alpha\wedge \omega(U_{2i},U_{2j},U_{2k})\\
	& = -x_{2j-1}\delta_{ik},
	\end{align*}
while
\begin{align*}
	g(\nabla^b_{U_{2i-1}}U_{2j-1},U_{2k}) & = \alpha\wedge \omega(U_{2i},U_{2j},U_{2k-1})\\
	& = \alpha(U_{2i})\omega(U_{2j},U_{2k-1})+\alpha(U_{2j})\omega(U_{2k-1},U_{2i})\\
	& = -x_{2i}\delta_{jk}+x_{2j}\delta_{ik}.
\end{align*}
Therefore: $\nabla^b_{U_{2i-1}}U_{2j-1}=-x_{2j-1}U_{2i-1}+x_{2j}U_{2i}-x_{2i}U_{2j}$. Now, for $i=j$, and for any vector field $X$ on $S^1\times S^{2n-1}$ we have that 
\[ g(\nabla^b_{U_{2i-1}}U_{2i-1},X)=g(\nabla^g_{U_{2i-1}}U_{2i-1},X)-\alpha\wedge\omega(U_{2i},U_{2i},JX)=g(\nabla^g_{U_{2i-1}}U_{2i-1},X).  \]
Therefore, $\nabla^b_{U_{2i-1}}U_{2i-1}=\nabla^g_{U_{2i-1}}U_{2i-1}=H-x_{2i-1}U_{2i-1}$, as claimed.
\end{proof}
	
\
	
Next, we will determine the curvature endomorphisms $R^b(U_i,U_j)$ associated to $\nabla^b$. This will be done in Propositions \ref{curvatura1}, \ref{curvatura3} and \ref{curvatura2}; their proofs are long but straightforward computations and therefore we omit them\footnote{Another expression for the Bismut curvature tensor on Hopf manifolds was given recently in \cite{B}.}.

\begin{proposition}\label{curvatura1}
For any $i=1,\ldots,n$, the curvature endomorphism $R^b(U_{2i-1},U_{2i})$ is given by  
\begin{align*}
	R^b(U_{2i-1},U_{2i}) U_{2i-1} & = R^b(U_{2i-1},U_{2i})U_{2i} = 0, \\
	R^b(U_{2i-1},U_{2i}) U_{2j-1} & = 2(1-x_{2i-1}^2-x_{2i}^2-x_{2j-1}^2-x_{2j}^2)U_{2j} \\
	& \quad +2\sum_{k\neq i,j}(x_{2j-1}x_{2k}-x_{2j}x_{2k-1})U_{2k-1} -2\sum_{k\neq i,j}(x_{2j-1}x_{2k-1}+x_{2j}x_{2k})U_{2k}
\end{align*} and $R^b(U_{2i-1},U_{2i})U_{2j}=J(R^b(U_{2i-1},U_{2i})U_{2j-1})$ by Lemma~\ref{curvatura-JJ}(a).
\end{proposition}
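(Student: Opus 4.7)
The plan is to compute $R^b(U_{2i-1},U_{2i})W$ directly from the definition
\[
R^b(X,Y)Z=\nabla^b_X\nabla^b_YZ-\nabla^b_Y\nabla^b_XZ-\nabla^b_{[X,Y]}Z,
\]
using the explicit formulas for $\nabla^b$ in the frame $\{U_k\}$ from the preceding proposition, the bracket $[U_{2i-1},U_{2i}]=x_{2i-1}U_{2i}-x_{2i}U_{2i-1}$ of Lemma~\ref{lema-hopf}, and the identities $U_k(x_l)=\delta_{kl}-x_kx_l$. By Lemma~\ref{curvatura-JJ}(a), $R^b(U_{2i-1},U_{2i})U_{2r}=JR^b(U_{2i-1},U_{2i})U_{2r-1}$, so it is enough to establish the formula on $U_{2i-1}$ and on $U_{2j-1}$ for $j\neq i$.

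The key simplification is that the Lee vector field $H=\sum_k x_kU_k$ is (a constant multiple of) $A$, so $\nabla^bH=\nabla^bJH=0$ by Theorem~\ref{nabla_b}. Whenever the formulas of the previous proposition produce an $H$ or a $JH$ at an intermediate step, no further differentiation of it is needed, which reduces the bookkeeping considerably. For the first claim, a short calculation gives
\[
\nabla^b_{U_{2i-1}}\nabla^b_{U_{2i}}U_{2i-1}-\nabla^b_{U_{2i}}\nabla^b_{U_{2i-1}}U_{2i-1}=-x_{2i}H-x_{2i-1}JH,
\]
and a direct expansion of $\nabla^b_{[U_{2i-1},U_{2i}]}U_{2i-1}=x_{2i-1}\nabla^b_{U_{2i}}U_{2i-1}-x_{2i}\nabla^b_{U_{2i-1}}U_{2i-1}$ produces the same expression. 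Subtracting yields $R^b(U_{2i-1},U_{2i})U_{2i-1}=0$, whence $R^b(U_{2i-1},U_{2i})U_{2i}=0$ by $J$-equivariance.

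For $j\neq i$ the computation follows the same template but is considerably longer. Each of the nested covariant derivatives applied to $U_{2j-1}$ expands into contributions lying in $\operatorname{span}\{U_{2i-1},U_{2i},U_{2j-1},U_{2j}\}$ together with multiples of $H$ and $JH$. After substituting $H=\sum_k x_kU_k$ and $JH=\sum_k(x_{2k-1}U_{2k}-x_{2k}U_{2k-1})$ and collecting terms, the coefficients of $U_{2i-1}$, $U_{2i}$ and $U_{2j-1}$ cancel, the $U_{2j}$ coefficient simplifies to $2(1-x_{2i-1}^2-x_{2i}^2-x_{2j-1}^2-x_{2j}^2)$, and the components along $U_{2k-1}$ and $U_{2k}$ for $k\neq i,j$ assemble into the two stated sums.

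The main obstacle is purely computational: the formulas for $\nabla^b_{U_a}U_b$ split into several subcases depending on whether $a=b$, $a$ and $b$ are $J$-paired, or unrelated, and one has to remain disciplined about which case applies at each nested derivative. No conceptual difficulty arises; beyond the product rule, the only nontrivial algebraic input is the identity $\sum_k x_k^2=1$ (equivalently $U_k(x_k)=1-x_k^2$), which is what ultimately consolidates the $U_{2j}$ coefficient into the form stated.
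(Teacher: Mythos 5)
Your proposal is correct and is exactly the computation the paper has in mind: the text states that Propositions \ref{curvatura1}--\ref{curvatura2} follow by ``long but straightforward computations'' from the explicit frame formulas for $\nabla^b$, which is precisely what you carry out, and your intermediate identities (e.g.\ $\nabla^b_{U_{2i-1}}\nabla^b_{U_{2i}}U_{2i-1}-\nabla^b_{U_{2i}}\nabla^b_{U_{2i-1}}U_{2i-1}=-x_{2i}H-x_{2i-1}JH=\nabla^b_{[U_{2i-1},U_{2i}]}U_{2i-1}$, and the cancellation of the $U_{2i-1},U_{2i},U_{2j-1}$ components after substituting $H=\sum_k x_kU_k$ and $JH$) check out against a direct expansion. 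The observation that $\nabla^bH=\nabla^bJH=0$, together with the $J$-equivariance of $R^b$ from Lemma~\ref{curvatura-JJ}(a), is the right way to keep the bookkeeping manageable.
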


\
	
\begin{proposition}\label{curvatura3}
For any $i,j=1,\ldots,n$, $(i\neq j)$, the curvature endomorphism $R^b(U_{2i-1},U_{2j})$ is given by 
\begin{align*}
	R^b(U_{2i-1},U_{2j})U_{2i-1} & = -(1-x_{2i-1}^2-x_{2i}^2-x_{2j-1}^2-x_{2j}^2)U_{2j} \\
	& \qquad +\sum_{k\neq i,j}(x_{2j}x_{2k-1}-x_{2j-1}x_{2k})U_{2k-1}+ \sum_{k\neq i,j}(x_{2j-1}x_{2k-1}+x_{2j}x_{2k})U_{2k} \\
	R^b(U_{2i-1},U_{2j})U_{2j-1} & = -(1-x_{2i-1}^2-x_{2i}^2-x_{2j-1}^2-x_{2j}^2)U_{2i} \\
	& \qquad +\sum_{k\neq i,j}(x_{2i}x_{2k-1}-x_{2i-1}x_{2k})U_{2k-1}+ \sum_{k\neq i,j}(x_{2i-1}x_{2k-1}+x_{2i}x_{2k})U_{2k} \\
	R^b(U_{2i-1},U_{2j})U_{2k-1} & = (x_{2j-1}x_{2k}-x_{2j}x_{2k-1})U_{2i-1}+(x_{2j-1}x_{2k-1}+x_{2j}x_{2k})U_{2i}  \\
	& \qquad +(x_{2i-1}x_{2k}-x_{2i}x_{2k-1})U_{2j-1}+(x_{2i-1}x_{2k-1}+x_{2i}x_{2k})U_{2j} \\
	& \qquad - 2(x_{2i-1}x_{2j-1}+x_{2i}x_{2j})U_{2k} \quad (k\neq i,j)\\
\end{align*}
Moreover, $R^b(U_{2i-1},U_{2j})U_{2r}=J(R^b(U_{2i-1},U_{2j})U_{2r-1})$ by Lemma~\ref{curvatura-JJ}(a). 
\end{proposition}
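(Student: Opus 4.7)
The strategy is a direct computation from the definition
\[
R^b(X,Y)Z = \nabla^b_X \nabla^b_Y Z - \nabla^b_Y \nabla^b_X Z - \nabla^b_{[X,Y]} Z,
\]
specialized to $X=U_{2i-1}$, $Y=U_{2j}$ (with $i\neq j$), using the explicit formulas for $\nabla^b_{U_a}U_b$ in the preceding proposition and the bracket relation $[U_{2i-1},U_{2j}] = x_{2i-1}U_{2j}-x_{2j}U_{2i-1}$ from Lemma~\ref{lema-hopf}. Before plunging into the calculation I would extract as much information as possible from Lemma~\ref{curvatura-JJ}: the identity $R^b(X,Y)\circ J = J\circ R^b(X,Y)$ reduces the problem to evaluating the endomorphism on odd-indexed basis vectors, which is exactly what the statement does; the $J$-invariance $R^b(JX,JY)=R^b(X,Y)$ shows that the first and second formulas must be related by the swap $i\leftrightarrow j$ (indeed, $R^b(U_{2i-1},U_{2j}) = -R^b(U_{2i},U_{2j-1})$ by $J$-invariance plus skew-symmetry), so it suffices to compute one of them and deduce the other.

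For each remaining case $Z\in\{U_{2i-1},U_{2k-1}\}$ with $k\neq i,j$, the calculation proceeds in three passes. First, I would expand $\nabla^b_{U_{2j}}Z$ and $\nabla^b_{U_{2i-1}}Z$ via the preceding proposition, obtaining expressions of the form $\sum_\ell f_\ell U_\ell$ where each $f_\ell$ is a Kronecker delta or a single $\pm x_m$. Second, I would apply $\nabla^b_{U_{2i-1}}$ and $\nabla^b_{U_{2j}}$ to these expressions via the Leibniz rule
\[
\nabla^b_{U_a}\bigl(f_\ell U_\ell\bigr) = U_a(f_\ell)\,U_\ell + f_\ell\,\nabla^b_{U_a}U_\ell,
\]
together with the fundamental differentiation identity $U_a(x_b) = \delta_{ab}-x_ax_b$ from Lemma~\ref{lema-hopf}. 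Third, I would subtract the bracket correction $x_{2i-1}\nabla^b_{U_{2j}}Z - x_{2j}\nabla^b_{U_{2i-1}}Z$ and collect terms. The characteristic factor $1-x_{2i-1}^2-x_{2i}^2-x_{2j-1}^2-x_{2j}^2$ appearing in the stated formulas emerges precisely at the simplification step, by invoking the normalization $\sum_m x_m^2 = 1$ to eliminate the quadratic monomials in the remaining $x_m$.

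The main obstacle is pure bookkeeping: each application of $\nabla^b_{U_a}$ to a sum of the form $\sum_\ell f_\ell U_\ell$ generates $\mathcal{O}(n)$ new terms, many of which only recombine after applying $\sum_m x_m^2 = 1$ one or more times, so it is easy to lose track of a sign or of a missing summand. Two independent consistency checks would guide and partially verify the computation. The first is that $H = \sum_m x_m U_m$ is proportional to the $\nabla^b$-parallel vector field $A$, so Lemma~\ref{curvatura-JJ}(d) forces the identity $\sum_m x_m\,R^b(U_m,U_{2j})Z = 0$; applied to any fixed $Z$, this is a linear system on the coefficients of the sought expressions that pins most of them down. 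The second is that both formulas must yield expressions that are consistent with the pair symmetry $R^b(X,Y,Z,W)=R^b(Z,W,X,Y)$ and with the type condition $R^b(JX,JY)=R^b(X,Y)$ already established in Lemma~\ref{curvatura-JJ}(b),(c). Taken together, these checks make the tedious expansion tractable and largely self-correcting.
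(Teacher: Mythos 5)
Your proposal is correct and follows essentially the same route as the paper, which presents Proposition \ref{curvatura3} as the outcome of a long but straightforward direct computation of $R^b(U_{2i-1},U_{2j})Z$ from the definition, using the explicit formulas for $\nabla^b_{U_a}U_b$, the relations $[U_a,U_b]=x_aU_b-x_bU_a$ and $U_a(x_b)=\delta_{ab}-x_ax_b$ of Lemma~\ref{lema-hopf}, and the normalization $\sum_m x_m^2=1$, omitting the details. Your extra reductions --- evaluating only on odd-indexed vectors via $R^b(X,Y)J=JR^b(X,Y)$, deducing the second formula from the first via $R^b(U_{2i-1},U_{2j})=R^b(U_{2j-1},U_{2i})$, and the consistency checks from $R^b(A,\cdot)=R^b(JA,\cdot)=0$ --- are all sound consequences of Lemma~\ref{curvatura-JJ} and genuinely lighten the bookkeeping.
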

	
\
	
\begin{proposition}\label{curvatura2}
For any $i,j=1,\ldots,n$, $(i\neq j)$, the curvature endomorphism $R^b(U_{2i-1},U_{2j-1})$ is given by  
\begin{align*}
	R^b(U_{2i-1},U_{2j-1})U_{2i-1} & = -(1-x_{2i-1}^2-x_{2i}^2-x_{2j-1}^2-x_{2j}^2)U_{2j-1} \\
	& \qquad +\sum_{k\neq i,j}(x_{2j-1}x_{2k-1}+x_{2j}x_{2k})U_{2k-1}+ \sum_{k\neq i,j}(x_{2j-1}x_{2k}-x_{2j}x_{2k-1})U_{2k} \\
	R^b(U_{2i-1},U_{2j-1})U_{2j-1} & = (1-x_{2i-1}^2-x_{2i}^2-x_{2j-1}^2-x_{2j}^2)U_{2i-1} \\
	& \qquad -\sum_{k\neq i,j}(x_{2i-1}x_{2k-1}+x_{2i}x_{2k})U_{2k-1}+ \sum_{k\neq i,j}(x_{2i}x_{2k-1}-x_{2i-1}x_{2k})U_{2k} \\
	R^b(U_{2i-1},U_{2j-1})U_{2k-1} & = -(x_{2j-1}x_{2k-1}+x_{2j}x_{2k})U_{2i-1}+(x_{2j-1}x_{2k}-x_{2j}x_{2k-1})U_{2i}  \\
	& \qquad +(x_{2i-1}x_{2k-1}+x_{2i}x_{2k})U_{2j-1}+(x_{2i}x_{2k-1}-x_{2i-1}x_{2k})U_{2j} \\
	& \qquad + 2(x_{2i-1}x_{2j}-x_{2i}x_{2j-1})U_{2k} \quad (k\neq i,j)\\
\end{align*}
and $R^b(U_{2i-1},U_{2j-1})U_{2r}=J(R^b(U_{2i-1},U_{2j-1})U_{2r-1})$ by Lemma~\ref{curvatura-JJ}(a). 
\end{proposition}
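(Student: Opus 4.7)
\smallskip

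My plan is to apply the definition of the curvature tensor directly, using the explicit expressions for $\nabla^b_{U_r}U_s$ given in the proposition immediately preceding the statement. Concretely, I would evaluate
\[
R^b(U_{2i-1},U_{2j-1})U_\ell \;=\; \nabla^b_{U_{2i-1}}\nabla^b_{U_{2j-1}}U_\ell \;-\; \nabla^b_{U_{2j-1}}\nabla^b_{U_{2i-1}}U_\ell \;-\; \nabla^b_{[U_{2i-1},U_{2j-1}]}U_\ell
\]
for $\ell$ ranging over the basis. By Lemma~\ref{curvatura-JJ}(a) it suffices to treat odd $\ell$, since the even cases follow by applying $J$; moreover, the skew-symmetry in $(U_{2i-1},U_{2j-1})$ means I really need three subcases: $\ell=2i-1$, $\ell=2j-1$, and $\ell=2k-1$ with $k\neq i,j$.

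The key ingredients I would feed into this calculation are:
\begin{itemize}
\item the Lie bracket $[U_{2i-1},U_{2j-1}] = x_{2i-1}U_{2j-1}-x_{2j-1}U_{2i-1}$ from Lemma~\ref{lema-hopf}, which turns the last summand into a linear combination of two previously computed covariant derivatives;
\item the derivative rule $U_m(x_n)=\delta_{mn}-x_m x_n$, applied every time one of the coefficients $x_r$ in $\nabla^b_{U_{2j-1}}U_\ell$ gets differentiated by the outer $\nabla^b_{U_{2i-1}}$;
\item the normalisation $\sum_r x_r^2 = 1$, which is what collapses the sum of squares into the factor $(1-x_{2i-1}^2-x_{2i}^2-x_{2j-1}^2-x_{2j}^2)$ appearing in the statement.
\end{itemize}
After expanding the two inner covariant derivatives (each of which is a sum of at most three basis vectors with coefficients of the form $\pm x_r$ or $\pm x_r U_r$), and then differentiating again, one collects terms by their basis component. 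Terms of the form $x_{2i-1}x_{2r-1}+x_{2i}x_{2r}$ and $x_{2i-1}x_{2r}-x_{2i}x_{2r-1}$ emerge naturally from the Leibniz rule applied to the two halves of $J$-rotations.

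The main obstacle is purely bookkeeping: each of the three terms in the curvature definition produces on the order of a dozen summands, and the cancellations that lead to the neat form above require pairing them carefully by basis component and then invoking $\sum_r x_r^2 = 1$. Two built-in consistency checks help: swapping $i\leftrightarrow j$ must produce an overall sign change, and by Lemma~\ref{curvatura-JJ}(c) the end result must satisfy $R^b(U_{2i-1},U_{2j-1}) = R^b(U_{2i},U_{2j})$, which constrains the coefficients appearing in front of $U_{2k-1}$ versus $U_{2k}$. The strategy is essentially identical to that carried out for Propositions~\ref{curvatura1} and~\ref{curvatura3}, as the author's footnote suggests, so no new phenomenon beyond careful term-tracking is expected.
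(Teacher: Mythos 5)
Your proposal is correct and follows exactly the route the paper intends: the authors explicitly omit the proofs of Propositions~\ref{curvatura1}, \ref{curvatura3} and \ref{curvatura2} as ``long but straightforward computations,'' and your outline (direct expansion of $R^b$ using the explicit $\nabla^b_{U_r}U_s$ formulas, the bracket $[U_{2i-1},U_{2j-1}]=x_{2i-1}U_{2j-1}-x_{2j-1}U_{2i-1}$, the rule $U_m(x_n)=\delta_{mn}-x_mx_n$, and $\sum_r x_r^2=1$, with the even-index cases recovered via Lemma~\ref{curvatura-JJ}(a)) is precisely that computation. A spot check, e.g.\ of the $U_{2k}$-coefficient in $R^b(U_{2i-1},U_{2j-1})U_{2k-1}$, confirms the bookkeeping yields the stated $2(x_{2i-1}x_{2j}-x_{2i}x_{2j-1})$, so the method goes through as described.
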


\

\begin{remark}
{\rm It follows from Lemma~\ref{curvatura-JJ}(c) that $R^b(U_{2i},U_{2j})=R^b(U_{2i-1},U_{2j-1})$ for any $i\neq j$.}
\end{remark}

	
\medskip 
	
As a consequence of Propositions \ref{curvatura1}, \ref{curvatura3} and \ref{curvatura2}, we recover the familiar fact that the Bismut connection on $S^1\times S^3$ is flat:
	
\begin{corollary}\label{Rb-flat}
On $S^1\times S^3$ the Bismut connection is flat, i.e., $R^b\equiv 0$.
\end{corollary}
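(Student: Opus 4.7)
The plan is to evaluate the explicit formulas from Propositions~\ref{curvatura1}, \ref{curvatura3} and~\ref{curvatura2} in the special case $n=2$ and observe that every coefficient vanishes for two independent reasons.

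First I would single out the two simplifications that occur when $n=2$: the indices $k \neq i,j$ in all the sums $\sum_{k\neq i,j}$ range over $\{1,\ldots,n\}\setminus\{i,j\}$, which is empty whenever $i\neq j$, so every such sum drops out; and the scalar factor $(1-x_{2i-1}^2-x_{2i}^2-x_{2j-1}^2-x_{2j}^2)$ reduces to $1-\sum_{r=1}^{4}x_r^2$, which vanishes identically on $S^{3}$ since $\sum_{r=1}^{4}x_r^2=1$. These are the only ``ingredients'' appearing in the expressions for $R^b(U_i,U_j)U_k$ in the three propositions (aside from the trivial lines $R^b(U_{2i-1},U_{2i})U_{2i-1}=R^b(U_{2i-1},U_{2i})U_{2i}=0$).

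Next I would enumerate the finitely many pairs $(i,j)$ that must be considered when $n=2$: the operators $R^b(U_1,U_2)$ and $R^b(U_3,U_4)$ coming from Proposition~\ref{curvatura1}; $R^b(U_1,U_4)$ and $R^b(U_3,U_2)$ from Proposition~\ref{curvatura3}; and $R^b(U_1,U_3)$ from Proposition~\ref{curvatura2} (the remaining pair is obtained by antisymmetry). Plugging $n=2$ into the corresponding formulas, in each case every surviving term is a multiple of $1-\sum_r x_r^2$, hence vanishes identically on $S^1\times S^3$. The action on the even-indexed basis fields follows immediately from Lemma~\ref{curvatura-JJ}(a), and the operators $R^b(U_{2i},U_{2j})$ with $i\neq j$ are covered by Lemma~\ref{curvatura-JJ}(c), which identifies them with $R^b(U_{2i-1},U_{2j-1})$, already shown to vanish.

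Since $\{U_1,U_2,U_3,U_4\}$ is a global frame, the vanishing of $R^b(U_i,U_j)U_k$ for every $i,j,k$ forces $R^b\equiv 0$. I do not expect any real obstacle here: the argument is essentially a bookkeeping exercise once the two vanishing mechanisms (emptiness of $\sum_{k\neq i,j}$ when $n=2$, and the sphere relation $\sum_r x_r^2=1$) are isolated. A slightly more conceptual route would be to use Lemma~\ref{curvatura-JJ}(d) to reduce everything to the two-dimensional distribution $\mathcal D$ orthogonal to $A$ and $JA$ and invoke $\dim\mathcal D=2$ to kill the remaining terms, but the direct substitution described above is the most transparent.
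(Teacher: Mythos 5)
Your proposal is correct and follows exactly the route the paper intends: the paper gives no separate argument for Corollary~\ref{Rb-flat} beyond stating that it is a consequence of Propositions~\ref{curvatura1}, \ref{curvatura3} and~\ref{curvatura2}, and your substitution $n=2$ (emptiness of the sums over $k\neq i,j$ together with $\sum_{r=1}^{4}x_r^2=1$ on $S^3$, plus Lemma~\ref{curvatura-JJ}(a) and (c) for the remaining operators) is precisely the bookkeeping that verification requires. No gaps.
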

	
\begin{remark}
{\rm It was proved by Samelson in \cite{Sam} that any compact Lie group of even dimension admits a left invariant complex structure compatible with a bi-invariant metric. Moreover, it was shown in \cite{AI,J} that such Hermitian manifold is Bismut flat. We point out that $S^1\times S^{2n-1}$ (with $n\geq 2$) is a Lie group only for $n=2$. In this case we have that $S^1\times S^3$ is isomorphic to $S^1\times \SU(2)$, and the Hermitian structure $(J,g)$ is left-invariant (in fact, $g$ is bi-invariant), and it can be obtained with  Samelson's construction.}
\end{remark}
	
\
	
In what follows we will determine the holonomy group $\operatorname{Hol}^b(S^1\times S^{2n-1})$ of the Bismut connection $\nabla^b$. Since $S^1\times S^{2n-1}$ is connected, we can choose any point $p$ as base point, and it will be convenient for us to choose $p=\left(1,\frac{1}{\sqrt{2n}}(1,\ldots,1)\right)\in S^1\times S^{2n-1}$. We will use Theorem \ref{AS1} in order to determine its Lie algebra $\mathfrak{hol}^b$. We begin with some auxiliary results.

\begin{lemma}\label{Rb-1} 
For $n\geq 3$, we have that:
\begin{enumerate}
	\item[(a)] the set $\{R^b(U_{2i-1},U_{2j})_p \mid 1\leq i<j\leq n\}$ is linearly independent.
	\item[(b)] the set $\{R^b(U_{2i-1},U_{2j-1})_p \mid 1\leq i<j\leq n-1\}$ is linearly independent. 
\end{enumerate}
\end{lemma}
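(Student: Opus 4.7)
My approach is to evaluate both families of curvature endomorphisms at the distinguished base point $p=\bigl(1,\tfrac{1}{\sqrt{2n}}(1,\ldots,1)\bigr)$, where every coordinate function $x_k$ collapses to the common value $\alpha:=\tfrac{1}{\sqrt{2n}}$. Setting $\beta:=1-x_{2i-1}^2-x_{2i}^2-x_{2j-1}^2-x_{2j}^2=\tfrac{n-2}{n}$, the crucial algebraic identity $\beta+4\alpha^2=1$ will be the reason everything collapses at the end. Equally important is the observation that every cross-difference of the form $x_{2a}x_{2b-1}-x_{2a-1}x_{2b}$ appearing in the formulas of Propositions~\ref{curvatura3} and \ref{curvatura2} vanishes at $p$. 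As a consequence, at $p$ the curvatures in (a) will swap the two subspaces $\operatorname{span}\{U_{2l-1}\}$ and $\operatorname{span}\{U_{2l}\}$, while those in (b) will preserve each of them.

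For part (a), this reduction lets me encode $R^b_p(U_{2i-1},U_{2j})$ by a single $n\times n$ matrix $M_{ij}$ whose $(m,l)$-entry records the coefficient of $U_{2m}$ in $R^b_p(U_{2i-1},U_{2j})U_{2l-1}$. Tabulating it case by case from Proposition~\ref{curvatura3} should reveal that $M_{ij}$ is symmetric, and then $\beta+4\alpha^2=1$ will compress the three cases into the tidy symmetric-tensor expression
\[
M_{ij}=-e_i\odot e_j+2\alpha^2(e_i+e_j)\odot e_T-2\alpha^2\sum_{k=1}^n e_k\odot e_k\quad\text{in } S^2\R^n,
\]
where $e_1,\ldots,e_n$ is the standard basis and $e_T:=\sum_k e_k$. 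Given a relation $\sum_{i<j}c_{ij}M_{ij}=0$, and writing $S:=\sum_{i<j}c_{ij}$ together with $T_k:=\sum_{l\neq k}c_{\min(k,l),\max(k,l)}$, the diagonal $e_k\odot e_k$ coefficients will give $T_k=S$ for every $k$; summing over $k$ then forces $nS=2S$, so $S=0$ precisely because $n\geq 3$, and consequently each $T_k=0$. The off-diagonal coefficients $e_k\odot e_l$ then read $c_{kl}=2\alpha^2(T_k+T_l)=0$, which finishes (a).

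For part (b), an analogous reduction represents $R^b_p(U_{2i-1},U_{2j-1})$ by a skew-symmetric matrix $N_{ij}$ acting on $\operatorname{span}\{U_{2l-1}\}$, and the same algebraic clean-up should produce
\[
N_{ij}=e_i\wedge e_j+2\alpha^2(e_j-e_i)\wedge e_T\quad\text{in } \Lambda^2\R^n.
\]
From a vanishing combination $\sum_{1\leq i<j\leq n-1}c_{ij}N_{ij}=0$, I would form $v:=\sum c_{ij}(e_j-e_i)=\sum_k v_k e_k$; the restriction $i,j\leq n-1$ forces $v_n=0$ automatically. The relation then reads $\sum_{i<j\leq n-1}c_{ij}\,e_i\wedge e_j=-2\alpha^2\,v\wedge e_T$, and extracting the coefficient of $e_k\wedge e_n$ for each $k<n$ yields $v_k=0$. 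Hence $v=0$, and the remaining identity $\sum c_{ij}\,e_i\wedge e_j=0$ immediately forces $c_{ij}=0$, since the wedges $\{e_i\wedge e_j\}_{1\leq i<j\leq n-1}$ are part of a basis of $\Lambda^2\R^n$.

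The main obstacle is the bookkeeping required to compress the three separate cases of Propositions~\ref{curvatura3} and \ref{curvatura2} into the clean closed forms for $M_{ij}$ and $N_{ij}$; once the identity $\beta+4\alpha^2=1$ has made the messy constants collapse, everything else is a transparent linear-algebra argument inside $S^2\R^n$ and $\Lambda^2\R^n$, with the hypothesis $n\geq 3$ entering only at the single point where we need $(n-2)S=0\Rightarrow S=0$ in part (a).
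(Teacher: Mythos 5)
Your argument is correct, and it reaches the same conclusion as the paper by a genuinely different route. Both proofs start identically: evaluate at $p$, where all $x_k=\tfrac{1}{\sqrt{2n}}$, the cross-terms $x_{2a}x_{2b-1}-x_{2a-1}x_{2b}$ vanish, and each operator is determined by its restriction to $\operatorname{span}\{U_{2l-1}\}$ via $J$-equivariance. From there the paper turns the relation $\sum c_{ij}R^b(\cdot,\cdot)_p=0$ into a structured homogeneous system $Mc=0$ in $\binom{n}{2}$ (resp.\ $\binom{n-1}{2}$) unknowns and proves $M$ invertible by exhibiting an explicit inverse $\tilde M$ entry by entry --- a verification that works but gives little insight into where the nondegeneracy comes from. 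You instead compress each operator into the closed forms $M_{ij}=-e_i\odot e_j+2\alpha^2(e_i+e_j)\odot e_T-2\alpha^2\sum_k e_k\odot e_k$ and $N_{ij}=e_i\wedge e_j+2\alpha^2(e_j-e_i)\wedge e_T$ (I checked both against Propositions~\ref{curvatura3} and \ref{curvatura2}, using $\beta+4\alpha^2=1$; they are right), and then the vanishing of the coefficients follows from reading off components in $S^2\R^n$ and $\Lambda^2\R^n$: the diagonal forces $T_k=S$, summing gives $(n-2)S=0$, and the off-diagonal then yields $c_{kl}=2\alpha^2(T_k+T_l)=0$; in part (b) the restriction $j\le n-1$ makes the $e_k\wedge e_n$ components kill $v$ immediately. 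Your version buys transparency --- the hypothesis $n\ge 3$ enters at exactly one visible point, and no inverse matrix needs to be guessed --- at the cost of the bookkeeping needed to establish the closed forms, which is essentially the same computation the paper performs to write down the rows of $M$. Either proof is complete; yours is arguably the cleaner one to present.
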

	
\begin{proof}
(a) It follows from Propositions \ref{curvatura3},  evaluating in $p=\left(1,\frac{1}{\sqrt{2n}}(1,\ldots,1)\right)$, that
for $i\neq j$ we have 
\begin{align*}
	R^b(U_{2i-1},U_{2j})_p U_{2i-1} & = -\frac{n-2}{n}U_{2j}+\frac{1}{n}\sum_{k\neq i,j}U_{2k},\\
	R^b(U_{2i-1},U_{2j})_p U_{2j-1} & = -\frac{n-2}{n}U_{2i}+\frac{1}{n}\sum_{k\neq i,j}U_{2k},\\
	R^b(U_{2i-1},U_{2j})_p U_{2k-1} & = \frac{1}{n}U_{2i}+\frac{1}{n}U_{2j}-\frac{2}{n}U_{2k},
\end{align*}
for $k\neq i,j$. 
		
\smallskip
		
Let us consider a linear combination 
\[ \sum_{i<j} c_{ij} R^b(U_{2i-1},U_{2j})_p=0.\]
Expanding $g(\sum_{i<j} c_{ij} R^b(U_{2i-1},U_{2j})_p U_{2r-1},U_{2s})=0$ for $r<s$, we obtain 
\begin{equation}\label{system1}
	-(n-2)c_{rs}+\sum_{r<j\neq s} c_{rj}+\sum_{i<r}c_{ir}+\sum_{s<j}c_{sj}+\sum_{r\neq i<s} c_{is}=0, \qquad 1\leq r<s\leq n.
\end{equation}

Therefore \eqref{system1} defines a homogeneous linear system $Mc=0$  of $\binom{n}{2}$ equations with $\binom{n}{2}$ unknowns ordered lexicographically. It turns out that the matrix $M$ is symmetric, the elements on the diagonal are all equal to $-(n-2)$, the elements off the diagonal are equal to either $0$ or $1$, and all the rows and columns have the same sum, namely, $n-2$. The $\binom{n}{2}\times\binom{n}{2}$-matrix $\tilde M=(\tilde{M}_{ij})$ given by:
\[ \tilde{M}_{ij}=\begin{cases}
			-\frac{2n-6}{n(n-2)}, \quad \text{if } i=j,\\
			-\frac{n-6}{2n(n-2)}, \quad \text{if } M_{ij}=1,\\
			-\frac{2}{n(n-2)}, \quad \text{if } M_{ij}=0.
	\end{cases} \]
is the inverse of $M$, which means that the system $Mc=0$ has the unique solution $c_{ij}=0$ for all $i<j$. The proof of (a) is complete.

\medskip

(b) It follows from Proposition \ref{curvatura2},  evaluating in $p=\left(1,\frac{1}{\sqrt{2n}}(1,\ldots,1)\right)$, that when $i\neq j$ we have
\begin{align*}
	R^b(U_{2i-1},U_{2j-1})_p U_{2i-1} & = -\frac{n-2}{n}U_{2j-1}+\frac{1}{n}\sum_{k\neq i,j}U_{2k-1},\\
	R^b(U_{2i-1},U_{2j-1})_p U_{2j-1} & = \frac{n-2}{n}U_{2i-1}-\frac{1}{n}\sum_{k\neq i,j}U_{2k-1},\\
	R^b(U_{2i-1},U_{2j-1})_p U_{2r-1} & = -\frac{1}{n}U_{2i-1}+\frac{1}{n}U_{2j-1},
\end{align*}
for $r\neq i,j$. 

Let us consider now a linear combination 
\[ {\sum_{i<j}}' c_{ij} R^b(U_{2i-1},U_{2j-1})_p=0,\]
where ${\sum}'$ means that the indices run up to $n-1$, i.e. $1\leq i<j\leq n-1$. Expanding $\displaystyle{g({\sum_{i<j}}' c_{ij} R^b(U_{2i-1},U_{2j-1})_p U_{2r-1},U_{2s-1})=0}$ for $1\leq r<s\leq n-1$, we obtain
\begin{equation}\label{system2}
	-(n-2)c_{rs}+{\sum_{r<j\neq s}}'c_{rj}-{\sum_{i<r}}'c_{ir}-{\sum_{s<j}}'c_{sj}+{\sum_{r\neq i<s}}'c_{is}=0.
\end{equation}

Therefore \eqref{system2} defines a homogeneous linear system $Mc=0$  of $\binom{n-1}{2}$ equations with $\binom{n-1}{2}$ unknowns ordered lexicographically. It turns out that the matrix $M$ is symmetric, the elements on the diagonal are all equal to $-(n-2)$ and the elements off the diagonal are equal to either $0$ or $\pm 1$. The $\binom{n-1}{2}\times\binom{n-1}{2}$-matrix $\tilde M=(\tilde{M}_{ij})$ given by:
\[ \tilde{M}_{ij}=\begin{cases}
			-\frac{3}{n}, \quad \text{if } i=j,\\
			\mp\frac{1}{n}, \quad \text{if } M_{ij}=\pm1,\\
			0, \quad \text{if } M_{ij}=0.
		\end{cases} \]
is the inverse of $M$, and therefore $c_{ij}=0$ for all $1\leq i<j\leq n-1$. The proof of (b) is complete.
\end{proof}

\
	
Now we can prove the main result in this section.
	
\begin{theorem}
Let $n\geq 3$ and $\lambda>1$. If $S^1\times S^{2n-1}$ is equipped with the Vaisman structure $(J_\lambda,g_\lambda)$ described above, then the associated Bismut connection $\nabla^b$ has holonomy group $\operatorname{Hol}^b(S^1\times S^{2n-1})=\U(n-1)$.
\end{theorem}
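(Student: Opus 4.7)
The strategy is to combine the upper bound $\operatorname{Hol}^b(S^1\times S^{2n-1})\subseteq\operatorname{U}(n-1)$ from Corollary~\ref{hol} with a matching lower bound obtained via the Ambrose-Singer theorem. Since $\dim\mathfrak{u}(n-1)=(n-1)^2$, it suffices to exhibit $(n-1)^2$ linearly independent endomorphisms of $T_pM$ inside $\mathfrak{hol}^b_p$, where $p=\bigl(1,\tfrac{1}{\sqrt{2n}}(1,\ldots,1)\bigr)$ is the base point chosen in the computations of Section~\ref{Hopf}.

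\textbf{First step.} By Theorem~\ref{AS1}, the holonomy algebra $\mathfrak{hol}^b_p$ contains every curvature endomorphism $R^b_p(x,y)$. In particular it contains the two families
\[
\mathcal{F}_1:=\{R^b(U_{2i-1},U_{2j})_p:1\leq i<j\leq n\},\qquad \mathcal{F}_2:=\{R^b(U_{2i-1},U_{2j-1})_p:1\leq i<j\leq n-1\},
\]
which, by Lemma~\ref{Rb-1}(a)-(b), consist respectively of $\binom{n}{2}$ and $\binom{n-1}{2}$ linearly independent operators.

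\textbf{Second step.} I would then check that $\mathcal{F}_1\cup\mathcal{F}_2$ is still a linearly independent set in $\operatorname{End}(T_pM)$. For this I plan to use the $\mathbb{Z}_2$-grading of $T_pM$ given by the parity of the index of the frame $\{U_1,\ldots,U_{2n}\}$: let $V_{\rm odd}=\operatorname{span}\{U_{2k-1}\}$ and $V_{\rm even}=\operatorname{span}\{U_{2k}\}$. A direct inspection of the formulas in Proposition~\ref{curvatura3} shows that each element of $\mathcal{F}_1$ sends $V_{\rm odd}$ into $V_{\rm even}$ and vice versa (it anti-commutes with the grading involution), while the formulas in Proposition~\ref{curvatura2} show that each element of $\mathcal{F}_2$ preserves $V_{\rm odd}$ and $V_{\rm even}$ (it commutes with that involution). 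Since grading-preserving and grading-reversing endomorphisms sit in transverse subspaces of $\operatorname{End}(T_pM)$, the two families are jointly independent, producing exactly $\binom{n}{2}+\binom{n-1}{2}=(n-1)^2$ independent elements of $\mathfrak{hol}^b_p$.

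\textbf{Third step.} Combining the second step with Corollary~\ref{hol} yields
\[
(n-1)^2\leq \dim\mathfrak{hol}^b_p\leq \dim\mathfrak{u}(n-1)=(n-1)^2,
\]
so $\mathfrak{hol}^b_p=\mathfrak{u}(n-1)$. Since $\operatorname{U}(n-1)$ is connected, the restricted holonomy group coincides with $\operatorname{U}(n-1)$. Then the chain $\operatorname{U}(n-1)=\operatorname{Hol}^b_0\subseteq \operatorname{Hol}^b\subseteq \operatorname{U}(n-1)$ forces $\operatorname{Hol}^b=\operatorname{U}(n-1)$, finishing the proof.

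\textbf{Main obstacle.} The principal subtlety is the second step: the numerical count $\binom{n}{2}+\binom{n-1}{2}=(n-1)^2$ saturates the upper bound exactly, leaving no slack to absorb any linear relation between $\mathcal{F}_1$ and $\mathcal{F}_2$. Hence the transversality must be established on structural grounds, and the parity-grading argument is the key device. One could instead try to exploit the relations $R^b(H,\cdot)=R^b(JH,\cdot)=0$ coming from Lemma~\ref{curvatura-JJ}(d) and dimension-count the image of $R^b_p:\Lambda^2\mathcal{D}_p\to\mathfrak{u}(n-1)$ directly, but the explicit parity decomposition seems considerably cleaner and reuses the linear independence already packaged in Lemma~\ref{Rb-1}.
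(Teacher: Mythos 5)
Your proof is correct and follows essentially the same route as the paper: the upper bound from Corollary~\ref{hol}, the lower bound via Theorem~\ref{AS1} applied to the two families of Lemma~\ref{Rb-1}, and the count $\binom{n}{2}+\binom{n-1}{2}=(n-1)^2$. Your parity-grading argument is a clean way to justify the joint independence of $\mathcal{F}_1\cup\mathcal{F}_2$, which the paper only asserts is ``easy to see''; just be careful to state that the grading-reversing/preserving behaviour holds for the operators \emph{evaluated at} $p=\bigl(1,\tfrac{1}{\sqrt{2n}}(1,\ldots,1)\bigr)$ --- where the cross terms $x_{2j-1}x_{2k}-x_{2j}x_{2k-1}$ vanish --- and not for the general-point formulas of Propositions~\ref{curvatura3} and~\ref{curvatura2}, which do contain grading-mixing terms.
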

	
\begin{proof}
We will compute, as before, the holonomy group $\operatorname{Hol}^b(S^1\times S^{2n-1})$ (and its Lie algebra $\mathfrak{hol}^b:=\mathfrak{hol}^b(S^1\times S^{2n-1})$) at the point $p=\left(1,\frac{1}{\sqrt{2n}}(1,\ldots,1)\right)\in S^1\times S^{2n-1}$.
		
Let us recall first from Corollary \ref{hol} that $\operatorname{Hol}^b(S^1\times S^{2n-1}) \subseteq \U(n-1)$. Therefore $\dim\mathfrak{hol}^b\leq (n-1)^2$.
		
Now, according to Theorem \ref{AS1},  $\mathfrak{hol}^b\subseteq \mathfrak{u}(n-1)$ contains all the curvature operators $R^b(X,Y)_p$ for any vector fields $X,Y$ on $S^1\times S^{2n-1}$. In particular, 
\begin{equation}\label{set}
\{R^b(U_{2i-1},U_{2j})_p\mid 1\leq i<j\leq n\} \cup \{R^b(U_{2i-1},U_{2j-1})_p\mid 1\leq i<j\leq n-1\} \subset \mathfrak{hol}^b.        \end{equation}
It follows from Lemma \ref{Rb-1} that each subset in the left-hand side of \eqref{set} is linearly independent; moreover, it is easy to see that their union is also linearly independent. Hence
\[ \dim\mathfrak{hol}^b\geq \binom{n}{2}+\binom{n-1}{2}= (n-1)^2.\]
Therefore $\dim\mathfrak{hol}^b=(n-1)^2$. This implies that $\mathfrak{hol}^b=\mathfrak{u}(n-1)$. Since $\operatorname{U}(n-1)$ is connected we have that $\operatorname{Hol}^b(S^1\times S^{2n-1})=\operatorname{U}(n-1)$.
\end{proof}

\medskip

We end this section this by writing down the Bismut Ricci curvature of the Hopf manifolds we are considering. The following result follows in a straightforward manner from Propositions \ref{curvatura1}, \ref{curvatura3} and \ref{curvatura2}.

\begin{proposition}
The Bismut Ricci curvature on $(S^1\times S^{2n-1},J_\lambda,g_\lambda)$ for $\lambda>1$ is given in terms of the orthonormal basis $\{U_1,\ldots,U_{2n}\}$ by
\begin{align*}
    \operatorname{Ric}^b(U_{2r-1},U_{2s-1}) & = -2(n-2)(x_{2r-1}x_{2s-1}+x_{2r}x_{2s} - \delta_{rs}), \\
    \operatorname{Ric}^b(U_{2r-1},U_{2s}) & = -2(n-2)(x_{2r-1}x_{2s}-x_{2r}x_{2s-1}).
\end{align*}
Also, $\Ric^b(U_{2r},U_{2s})=\Ric^b(U_{2r-1},U_{2s-1})$ for all $r,s$, according to Corollary \ref{ric-sym}.
\end{proposition}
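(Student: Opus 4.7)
The plan is to compute the Bismut Ricci tensor directly from the definition
\[
\operatorname{Ric}^b(X, Y) = \sum_{k=1}^{2n} g(R^b(U_k, X)Y, U_k),
\]
evaluated in the global $g$-orthonormal frame $\{U_1, \ldots, U_{2n}\}$, and to substitute the explicit formulas for the curvature endomorphisms $R^b(U_i, U_j)$ provided by Propositions~\ref{curvatura1}, \ref{curvatura3}, and \ref{curvatura2}. By Corollary~\ref{ric-sym}, $\operatorname{Ric}^b$ is symmetric and $J$-invariant, so it suffices to determine the two components $\operatorname{Ric}^b(U_{2r-1}, U_{2s-1})$ and $\operatorname{Ric}^b(U_{2r-1}, U_{2s})$; the remaining component $\operatorname{Ric}^b(U_{2r}, U_{2s})$ is then forced to equal $\operatorname{Ric}^b(U_{2r-1}, U_{2s-1})$.

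For fixed indices $r, s$, the strategy is to split the sum over $k$ into odd and even indices $k = 2i-1$ and $k = 2i$, and then to subdivide further according to whether $i \in \{r, s\}$ or $i \notin \{r, s\}$. The boundary cases $i \in \{r, s\}$ are supplied either by Proposition~\ref{curvatura1} (when the two indices of $R^b$ coincide with the complex block $(2r-1, 2r)$) or by the special subcases of Propositions~\ref{curvatura3} and \ref{curvatura2}, while the generic contribution $i \neq r, s$ is read off from the last line of each proposition. From each such expansion one extracts only the $U_k$-component of the vector $R^b(U_k, U_{2r-1})U_{2s-1}$ or $R^b(U_k, U_{2r-1})U_{2s}$, producing a sum of polynomials in the coordinate functions $x_j$.

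The key simplification step is to collapse these sums using the single global identity $\sum_{j=1}^{2n} x_j^2 = 1$ recorded in Lemma~\ref{lema-hopf}: partial sums of the form $\sum_{i \neq r, s}$ are converted into full sums $\sum_i$ by adding and subtracting the missing terms, and those full sums then simplify via $\sum_j x_j^2 = 1$ to produce the coefficient $-2(n-2)$ and the Kronecker term $\delta_{rs}$. The main obstacle is not mathematical depth but the bookkeeping: keeping track of signs, of indices that must be excluded, and of the pairing between odd and even frame indices through $J U_{2i-1} = U_{2i}$. As a consistency check, writing $z_r := x_{2r-1} + i x_{2r}$, the two components should assemble into the single Hermitian quantity proportional to $-2(n-2)(\bar z_r z_s - \delta_{rs})$, which simultaneously recovers both the symmetry and the $J$-invariance of $\operatorname{Ric}^b$ predicted by Corollary~\ref{ric-sym}.
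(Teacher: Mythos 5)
Your proposal is correct and follows essentially the same route as the paper, which simply states that the result ``follows in a straightforward manner'' by tracing the explicit curvature endomorphisms of Propositions~\ref{curvatura1}, \ref{curvatura3} and \ref{curvatura2} over the orthonormal frame $\{U_1,\ldots,U_{2n}\}$ and simplifying with $\sum_j x_j^2=1$. Your bookkeeping scheme (splitting the trace into the blocks $i\in\{r,s\}$ and $i\notin\{r,s\}$, then deducing $\Ric^b(U_{2r},U_{2s})$ from the $J$-invariance in Corollary~\ref{ric-sym}) is exactly what is needed, and the check via $z_r=x_{2r-1}+ix_{2r}$ is consistent with the stated formulas.
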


\begin{corollary}
The Bismut connection associated to $(S^1\times S^{2n-1},J_\lambda,g_\lambda)$ has $\operatorname{Ric}^b=0$ if and only if $n=2$, and in this case $\nabla^b$ is flat (see Corollary \ref{Rb-flat}).
\end{corollary}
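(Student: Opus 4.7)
The plan is to deduce the corollary directly from the explicit formula for $\operatorname{Ric}^b$ given in the preceding proposition, observing that the factor $-2(n-2)$ completely controls the vanishing of the Bismut Ricci tensor.

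For the forward direction, if $n=2$ then the coefficient $-2(n-2)$ equals zero, so both expressions in the proposition yield $\operatorname{Ric}^b(U_i,U_j)=0$ for all $i,j$. Since $\{U_1,\ldots,U_{2n}\}$ is a global frame, this gives $\operatorname{Ric}^b\equiv 0$.

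For the converse, I would argue contrapositively: assume $n\geq 3$ and exhibit a point where $\operatorname{Ric}^b$ does not vanish. Setting $r=s$ in the first formula gives
\[
\operatorname{Ric}^b(U_{2r-1},U_{2r-1}) = -2(n-2)\bigl(x_{2r-1}^2+x_{2r}^2-1\bigr).
\]
Since the functions $x_j\in C^\infty(S^1\times S^{2n-1})$ satisfy $\sum_j x_j^2=1$, the quantity $x_{2r-1}^2+x_{2r}^2-1$ is negative at any point where some other coordinate function is nonzero, which certainly occurs for $n\geq 3$. For instance, at the base point $p=\bigl(1,\tfrac{1}{\sqrt{2n}}(1,\ldots,1)\bigr)$ used earlier, we have $x_{2r-1}^2+x_{2r}^2=\tfrac{1}{n}$, so
\[
\operatorname{Ric}^b(U_{2r-1},U_{2r-1})_p = -2(n-2)\Bigl(\tfrac{1}{n}-1\Bigr) = \tfrac{2(n-2)(n-1)}{n} \neq 0
\]
whenever $n\geq 3$. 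Hence $\operatorname{Ric}^b \not\equiv 0$ in that case, which completes the equivalence.

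Finally, the flatness statement in the $n=2$ case is not something new to prove: it is precisely Corollary \ref{Rb-flat}, which asserts that $R^b\equiv 0$ on $S^1\times S^3$, and of course flatness trivially implies Ricci-flatness. No step here is genuinely difficult; the only point requiring mild care is making sure the chosen evaluation point lies in the domain and that $n\geq 3$ forces the coordinate constraint $\sum x_j^2=1$ to distribute mass beyond the $(2r{-}1,2r)$ coordinate pair, which is immediate.
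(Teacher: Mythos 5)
Your proof is correct and follows exactly the route the paper intends: the corollary is stated as an immediate consequence of the preceding proposition, with the factor $-2(n-2)$ governing the vanishing of $\operatorname{Ric}^b$ and Corollary~\ref{Rb-flat} supplying flatness for $n=2$. Your evaluation at the point $p=\bigl(1,\tfrac{1}{\sqrt{2n}}(1,\ldots,1)\bigr)$, where $x_{2r-1}^2+x_{2r}^2=\tfrac1n<1$, cleanly certifies non-vanishing for $n\geq 3$, so nothing is missing.
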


\
	
\section{Bismut holonomy of LCK Oeljeklaus-Toma manifolds}\label{section-OT}
	
In this section we study the Bismut holonomy of Oeljeklaus-Toma manifolds (OT manifolds for short) admitting an LCK metric. OT manifolds appeared in \cite{OT}, and they are non-K\"ahler compact complex manifolds which arise from certain number fields which admit $s$ real embeddings and $2t$ complex embeddings (OT manifolds of type $(s,t)$). When $t=1$ these OT manifolds admit LCK metrics. However, it was shown recently in \cite{DV, D, Vu} that they do not admit any LCK metric when $t\geq 2$.
	
It was proved in \cite{K} that all OT manifolds are in fact solvmanifolds, whose complex structure is induced by a left invariant one on the corresponding solvable Lie group. Using this solvmanifold structure, Kasuya also showed in \cite{K} that OT manifolds of any type do not admit Vaisman metrics. Moreover, for OT manifolds of type $(s,1)$, the LCK Hermitian structure is also induced by a left invariant one on the solvable Lie group. It can be deduced from \cite{K} that the Lie algebra $\frg$ of the Lie group corresponding to an OT manifold of type $(s,1)$ has a basis $\{A_1,\ldots,A_s,B_1,\ldots,B_s,C_1,C_2\}$ with Lie brackets given by\footnote{Note that for $s=1$ we obtain a Lie bracket isomorphic to the one in Example \ref{example}.}:
\begin{align*}
	[A_i,B_i] &=B_i, \qquad i=1,\ldots, s, \nonumber \\
	[A_i,C_1] &=-\frac12 C_1 +r_i C_2, \\
	[A_i,C_2] &=-r_i C_1 -\frac12 C_2, \nonumber
\end{align*}
for some real numbers $r_i\in\R$, $i=1,\ldots, s$. The complex structure $J$ on $\frg$ takes the following expression:
	\[ JA_i=B_i \; (i=1,\ldots, s), \qquad JC_1=C_2,\]
and the fundamental $2$-form is given by
	\[ \omega=2\sum_i \alpha_i\wedge\beta_i + \sum_{i\neq j}\alpha_i\wedge \beta_j + \gamma_1\wedge \gamma_2, \]
where $\{\alpha_1,\ldots,\alpha_s,\beta_1,\ldots,\beta_s,\gamma_1,\gamma_{2}\}$ is the dual basis of $\frg^*$. If $g=\omega(\cdot,J\cdot)$ denotes the corresponding Hermitian metric, then $(\frg,J,g)$ is an LCK Lie algebra, with Lee form $\theta=\alpha_1+\cdots+\alpha_s$. Note that the non-zero values of the metric $g$, in terms of the basis of $\frg$, are given by:
\[g(A_i,A_i)=g(B_i,B_i)=2,\quad g(A_i,A_j)=g(B_i,B_j)=1, \; (i\neq j),\quad g(C_1,C_1)=g(C_2,C_2)=1. \] 
Note that the vectors $A$ and $JA$, $g$-dual to the Lee form $\theta$ and $J\theta$ respectively, are given by
\[ A=\frac{1}{s+1}(A_1+\cdots+A_s), \quad JA=\frac{1}{s+1}(B_1+\cdots+B_s).\]
	
\

The main result of this section is Theorem \ref{hol-OT} where we set the holonomy group of OT manifolds of type $(s,1)$. Since these metrics are not Vaisman, one cannot expect a reduction of the holonomy group as stated in Corollary~\ref{hol}. In fact, in Theorem~\ref{hol-OT} we obtain the whole group $\operatorname{U}(s+1)$ as holonomy group. In order to get this result, several computations are needed.  We start determining the curvature endomorphisms $R^b$.
Using the well-known Koszul formula for the computation of the Levi-Civita connection $\nabla^g$, we obtain that $\nabla^g$, expressed in terms of the basis of $\frg$, is given by the following:
\begin{multicols}{2}
	\begin{itemize}
		\item $\nabla^g_{A_i}A_j=0, \; \forall i,j$
		\item $\nabla^g_{A_i}B_j=-\frac12 B_i+ \frac12(1+\delta_{ij}) JA$ 
		\item $\nabla^g_{A_i}C_1=r_i C_2$
		\item $\nabla^g_{A_i}C_2=-r_i C_1$
		\item $\nabla^g_{B_i}A_j=-(\frac12+\delta_{ij}) B_j+\frac12(1+\delta_{ij}) JA$
		\item $\nabla^g_{B_i}B_j=(1+\delta_{ij})\left[\frac12(A_i+A_j)-A\right]$
		\item $\nabla^g_{B_i}C_k=0$, $k=1,2$
		\item $\nabla^g_{C_k}A_i=\frac12 C_k$, $k=1,2$
		\item $\nabla^g_{C_k}B_i=0$, $k=1,2$
		\item $\nabla^g_{C_k}C_k=-\frac12 A$, $k=1,2$
		\item $\nabla^g_{C_i}C_j=0$, $i\neq j$.
	\end{itemize}
\end{multicols}
	
\
	
Applying \eqref{nablab-formula}, we arrive at the following result:
	
\begin{proposition}
The Bismut connection of an OT manifold of type $(s,1)$, expressed in terms of the basis $\{A_i,B_i,C_1,C_2\}$ of the corresponding Lie algebra $\frg$, is given by
\begin{multicols}{2}
		\begin{itemize}
		\item $\nabla^b_{A_i}A_j=0 \; \forall i,j$
					\item $\nabla^b_{A_i}C_1=r_i C_2$
					\item $\nabla^b_{B_i}A_j=(1+\delta_{ij}) (-B_j + JA)$ 
					\item $\nabla^b_{B_i}C_1=-\frac12 C_2$
					\item $\nabla^b_{C_k}A_i=\frac12 C_k$, $k=1,2$
					\item $\nabla^b_{C_1}C_1=-\frac12 A$
					\item $\nabla^b_{C_2}C_1=\frac12 JA$
				\end{itemize}
			\end{multicols}
The missing values can be deduced from 	$\nabla^b_X JY = J(\nabla^b_X Y)$.
	\end{proposition}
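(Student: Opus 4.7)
The plan is to apply formula~\eqref{nablab-formula},
\[ \nabla^b_XY=\nabla^g_XY+\tfrac{1}{2}\bigl(\theta(JX)JY-\theta(JY)JX-\omega(X,Y)JA\bigr), \]
to each relevant pair of basis vectors of $\frg$, substituting the Levi-Civita values tabulated just above. The statement then follows from a finite bookkeeping check, together with the fact that $\nabla^b J=0$, i.e.\ $\nabla^b_X(JY)=J(\nabla^b_XY)$, which halves the number of cases by letting us deduce $\nabla^b_X B_j$ and $\nabla^b_X C_2$ from $\nabla^b_X A_j$ and $\nabla^b_X C_1$.

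First I would record the auxiliary data on the basis. From $\theta=\alpha_1+\cdots+\alpha_s$ one reads off $\theta(A_i)=1$ while $\theta(B_i)=\theta(C_k)=0$; combined with $JA_i=B_i$, $JB_i=-A_i$, $JC_1=C_2$, $JC_2=-C_1$, this shows that the only nonzero value of $\theta\circ J$ on the basis is $\theta(JB_i)=-1$. From the stated expression for $\omega$, the only nonzero pairings among basis vectors are $\omega(A_i,B_j)=1+\delta_{ij}$ and $\omega(C_1,C_2)=1$, together with their skew-symmetric counterparts.

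With these data in hand, the correction term in \eqref{nablab-formula} vanishes identically for the pairs $(A_i,A_j)$, $(A_i,C_1)$, $(C_k,A_i)$, and $(C_1,C_1)$, so for those the Bismut connection coincides with the Levi-Civita one and the listed formulas are immediate. The three nontrivial cases are as follows. For $(B_i,A_j)$, the contributions $\theta(JB_i)JA_j=-B_j$ and $-\omega(B_i,A_j)JA=(1+\delta_{ij})JA$ combine with the Levi-Civita value $-(\tfrac{1}{2}+\delta_{ij})B_j+\tfrac{1}{2}(1+\delta_{ij})JA$ to yield $(1+\delta_{ij})(-B_j+JA)$. For $(B_i,C_1)$ only the term $\theta(JB_i)JC_1=-C_2$ contributes, giving $-\tfrac{1}{2}C_2$. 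For $(C_2,C_1)$ only $-\omega(C_2,C_1)JA=JA$ survives, giving $\tfrac{1}{2}JA$. The remaining listed entries then follow by $J$-linearity. There is no conceptual obstacle here; the only care required is in the sign bookkeeping, in particular the evaluations $\omega(B_i,A_j)=-(1+\delta_{ij})$ and $\omega(C_2,C_1)=-1$, which must be tracked carefully so that the $-\omega(X,Y)JA$ correction contributes with the correct sign.
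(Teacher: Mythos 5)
Your proposal is correct and follows exactly the route the paper takes: the paper simply states that the result is obtained by applying \eqref{nablab-formula} to the tabulated Levi-Civita values, which is precisely your computation, and your sign bookkeeping (in particular $\theta(JB_i)=-1$, $\omega(B_i,A_j)=-(1+\delta_{ij})$, $\omega(C_2,C_1)=-1$) checks out in every case.
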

	
\

Using the previous proposition we obtain the following expressions for the curvature $R^b$.  The proof consists of long but standard computations.
	
\begin{proposition}\label{Rb-OT}
The curvature $R^b$ of the Bismut connection on an OT manifold of type $(s,1)$ is given by:
\begin{multicols}{2}
	\begin{itemize}
	\item $R^b(A_i,A_j)=0$
	\item $R^b(A_i,B_j)=0  \; (i\neq j)$
	\item $R^b(A_i,B_i)A_j=(1+\delta_{ij})(B_j-JA)$
	\item $R^b(A_i,B_i)C_1=\frac12 C_2$
	\item $R^b(B_i,B_j)A_k=\frac{(1+\delta_{jk})A_i-(1+\delta_{ik})A_j}{s+1}$
	\item $R^b(B_i,B_j)C_1=0$
	\item $R^b(A_i,C_k)A_j=\frac14 C_k \; (\forall i,j,k)$
	\item $R^b(A_i,C_1)C_1=-\frac14 A$
	\item $R^b(A_i,C_2)C_1=\frac14 JA$
	\item $R^b(B_i,C_1)A_j=\frac{-s+1+2\delta_{ij}}{4(s+1)}C_2$
	\item $R^b(B_i,C_1)C_1=\frac{1}{2(s+1)}B_i-\frac14 JA$
	\item $R^b(B_i,C_2)A_j=\frac{s-1-2\delta_{ij}}{4(s+1)}C_1$
	\item  $R^b(B_i,C_2)C_1=\frac{1}{2(s+1)}A_i-\frac14 A$
	\item $R^b(C_1,C_2)A_i=-\frac12 JA$
	\item $R^b(C_1,C_2)C_1=\frac{s}{2(s+1)} C_2$
	\end{itemize}
\end{multicols}
The missing values are either $0$ or can be deduced from the ones in this list using that $R^b(\cdot,\cdot)J=JR^b(\cdot,\cdot)$.
\end{proposition}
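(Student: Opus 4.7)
The plan is to compute every entry directly from the definition
\[
R^b(X,Y)Z = \nabla^b_X \nabla^b_Y Z - \nabla^b_Y \nabla^b_X Z - \nabla^b_{[X,Y]} Z,
\]
substituting the values of $\nabla^b$ from the preceding proposition and the Lie brackets that define $\frg$. The first reduction I would make is to invoke Lemma~\ref{curvatura-JJ}(a) — which applies here because it requires only $\nabla^b J = 0$ and hence holds on any Hermitian manifold, even one that is not Vaisman — so that $R^b(X,Y)JZ = JR^b(X,Y)Z$. This lets me restrict the evaluation argument $Z$ to $\{A_1,\ldots,A_s,C_1\}$, after which antisymmetry in $(X,Y)$ cuts the list of pairs down to the six families appearing in the statement. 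I would also fix once and for all the linear expansions $A = \tfrac{1}{s+1}\sum_i A_i$ and $JA = \tfrac{1}{s+1}\sum_i B_i$, since these vectors appear in the right-hand sides of several $\nabla^b$-formulas and must be re-expanded before $\nabla^b$ can be applied to them.

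With that scaffolding, the easy cases fall out immediately. Whenever one of $X,Y$ is an $A_i$, the formulas $\nabla^b_{A_i}A_j = 0$ and $\nabla^b_{A_i}B_j = 0$ kill half of the iterated derivatives; combined with $[A_i,A_j]=[B_i,B_j]=[B_i,C_k]=[C_1,C_2]=0$, this disposes at once of $R^b(A_i,A_j)$ and of $R^b(A_i,B_j)$ for $i\ne j$, and it also eliminates the bracket term in most of the remaining cases. The nontrivial brackets $[A_i,B_i] = B_i$, $[A_i,C_1] = -\tfrac12 C_1 + r_i C_2$, $[A_i,C_2] = -r_i C_1 - \tfrac12 C_2$ feed into the $\nabla^b_{[X,Y]}Z$ term for $R^b(A_i,B_i)$, $R^b(A_i,C_1)$, $R^b(A_i,C_2)$; one then expands the two iterated derivatives and collects coefficients.

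The main obstacle — and the reason the author calls the computation lengthy — is the bookkeeping for the cases $R^b(B_i,B_j)$, $R^b(B_i,C_k)$, and $R^b(C_1,C_2)$, where an intermediate step produces either $A$ or $JA$, each of which must be re-expanded as $\tfrac{1}{s+1}\sum_k A_k$ or $\tfrac{1}{s+1}\sum_k B_k$ before the outer $\nabla^b$ acts. As a representative sample, for $R^b(B_i,C_1)A_j$ the bracket term vanishes, the first iterated term is $\nabla^b_{B_i}(\tfrac12 C_1) = -\tfrac14 C_2$, and the second is
\[
\nabla^b_{C_1}\bigl((1+\delta_{ij})(-B_j + JA)\bigr) = (1+\delta_{ij})\Bigl(-\tfrac12 C_2 + \tfrac{1}{s+1}\textstyle\sum_k \tfrac12 C_2\Bigr) = -\tfrac{1+\delta_{ij}}{2(s+1)}\,C_2,
\]
which combined give exactly $\tfrac{-s+1+2\delta_{ij}}{4(s+1)}C_2$. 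Every other entry in the list is obtained by an analogous short calculation; once the expansion convention for $A$ and $JA$ is fixed, there is no conceptual difficulty, only careful arithmetic.
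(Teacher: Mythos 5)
Your proposal is correct and follows the same route as the paper, which simply carries out the direct computation of $R^b(X,Y)Z=\nabla^b_X\nabla^b_YZ-\nabla^b_Y\nabla^b_XZ-\nabla^b_{[X,Y]}Z$ from the $\nabla^b$-formulas (the paper states the proof "consists of long but standard computations"). Your organizational reductions (using $R^b(\cdot,\cdot)J=JR^b(\cdot,\cdot)$, antisymmetry in $(X,Y)$, and the expansions $A=\tfrac{1}{s+1}\sum_i A_i$, $JA=\tfrac{1}{s+1}\sum_i B_i$) are exactly what is needed, and your sample computation of $R^b(B_i,C_1)A_j$ checks out against the stated value.
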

	
\medskip

\begin{remark}
{\rm It follows from Proposition \ref{Rb-OT} that the curvature operators $R^b(A_i,C_1)$ and $R^b(A_i,C_2)$ are independent of $i$. We will denote simply
\[ R^b_{AC1}:=R^b(A_i,C_1), \qquad   R^b_{AC2}:=R^b(A_i,C_2),\]
for any $i$. Moreover, for $s\neq 2$, these operators verify a linear relation with other curvature operators, since
\[
R^b_{AC1}=\frac{1}{s-2}\sum_i R^b(B_i,C_2), \qquad 
R^b_{AC2}=-\frac{1}{s-2}\sum_i R^b(B_i,C_1).
\] }
\end{remark}	

\
	
We may now compute the holonomy algebra $\mathfrak{hol}^b(M)$ of the Bismut connection associated to the LCK structure on an OT manifold $M=\Gamma\backslash G$ of type $(s,1)$. Recall that $\mathfrak{hol}^b(M)$ is the smallest subalgebra of $\mathfrak{u}(\frg)\cong \mathfrak{u}(s+1)$ containing the curvature operators $R^b(X,Y)$, $X,Y\in\frg$, and being  closed under commutators with $\nabla^b_X$, $X\in\frg$, due to Theorem \ref{AS2}.

\

For low dimensions, it is straightforward to verify that $\mathfrak{hol}^b(M)= \mathfrak{u}(\frg)$, that is, there is no reduction of the Bismut holonomy group. Indeed, computing all the corresponding curvature operators and the commutators between any two of them we obtain:

\begin{lemma}\label{dim-baja}
For $s=1$ and $s=2$, the holonomy algebra $\mathfrak{hol}^b(M)$ of the Bismut connection on an OT manifold $M$ of type $(s,1)$ coincides with $\mathfrak{u}(\frg)$.
\end{lemma}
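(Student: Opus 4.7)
The plan is to apply Theorem~\ref{AS2}: $\mathfrak{hol}^b(M)$ equals the smallest subalgebra of $\mathfrak{gl}(\frg)$ containing all the curvature endomorphisms $R^b(x,y)$ for $x,y\in\frg$ and closed under commutators with the left-multiplication operators $\nabla^b_x$. Combined with the general inclusion $\mathfrak{hol}^b(M)\subseteq\mathfrak{u}(\frg)$ (Hermitian connection), the goal in each case is to exhibit enough linearly independent elements of $\mathfrak{hol}^b(M)$ to reach the full dimension of $\mathfrak{u}(\frg)$, namely $(s+1)^2$.

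For $s=1$, the Lie algebra $\frg$ has basis $\{A_1,B_1,C_1,C_2\}$ with brackets $[A_1,B_1]=B_1$, $[A_1,C_1]=-\tfrac12 C_1+r_1C_2$, $[A_1,C_2]=-r_1 C_1-\tfrac12 C_2$. After rescaling $A_1$ to unit length, this is precisely the Lie algebra of Example~\ref{example} (with $\mu=1/\sqrt2$ and $y$ corresponding to $r_1$). Since the Hermitian data $(J,\langle\cdot,\cdot\rangle)$ also match (up to the same rescaling), the computation carried out in that example applies verbatim and yields $\mathfrak{hol}^b(M)=\mathfrak{u}(2)=\mathfrak{u}(\frg)$, handling this case.

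For $s=2$, I would first fix an orthonormal basis of $(\frg,g)$ by Gram--Schmidt from $\{A_1,A_2,B_1,B_2,C_1,C_2\}$, exploiting that $g(A_i,A_j)=g(B_i,B_j)=1+\delta_{ij}$ and $g(C_k,C_\ell)=\delta_{k\ell}$, so that $\mathfrak{u}(\frg)$ is genuinely identified with $\mathfrak{u}(3)$ (dimension $9$). Then from Proposition~\ref{Rb-OT} I would list all a priori distinct curvature endomorphisms in this case: $R^b(A_1,B_1)$, $R^b(A_2,B_2)$, $R^b(B_1,B_2)$, $R^b_{AC1}$, $R^b_{AC2}$, $R^b(B_i,C_k)$ for $i=1,2$, $k=1,2$, and $R^b(C_1,C_2)$. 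I would write each as a skew-Hermitian $6\times 6$ matrix in the orthonormal basis, and compute their rank over $\R$. If the rank already equals $9$, we are done; otherwise I would enlarge the system by commutators $[\nabla^b_x,R^b(y,z)]$ for $x,y,z$ basis vectors, using the explicit formulas for $\nabla^b$ preceding Proposition~\ref{Rb-OT}, and iterate until reaching dimension $9$. The inclusion $\mathfrak{hol}^b(M)\subseteq\mathfrak{u}(3)$ then forces equality, and connectedness of $\operatorname{U}(3)$ upgrades this to the group statement at the level of restricted holonomy.

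The main obstacle is the $s=2$ case: it is purely a linear-algebra verification but involves nine skew-Hermitian $6\times 6$ matrices and, potentially, several commutators with $\nabla^b_x$. The delicate point is that the relations $R^b_{AC1}=\tfrac{1}{s-2}\sum_i R^b(B_i,C_2)$ and $R^b_{AC2}=-\tfrac{1}{s-2}\sum_i R^b(B_i,C_1)$ break down precisely at $s=2$, so at that value the operators $R^b(A_i,C_k)$ are a priori independent from the $R^b(B_i,C_\ell)$ and must be handled directly; this is in fact what allows us to reach the full $\mathfrak{u}(3)$. I expect that a carefully organized rank computation on the curvature operators alone will already exhaust $\mathfrak{u}(3)$, so that no commutators are needed, but the explicit matrix check is the only unavoidable work.
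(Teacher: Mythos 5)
Your proposal is correct and follows essentially the same route as the paper: the $s=1$ case is reduced (after the rescaling $e_1=A_1/\sqrt2$, $e_2=B_1/\sqrt2$, $e_3=C_1$, $e_4=C_2$, giving $\mu=1/\sqrt2$, $y=r_1/\sqrt2$) to the explicit computation of Example~\ref{example}, and the $s=2$ case is a finite linear-algebra check via Theorem~\ref{AS2} that the curvature operators, enlarged if necessary by commutators, span the $9$-dimensional algebra $\mathfrak{u}(\frg)\cong\mathfrak{u}(3)$. Like the paper, you leave the $s=2$ matrix verification as an asserted straightforward computation, and your observation that the relations expressing $R^b_{AC1}$, $R^b_{AC2}$ in terms of the $R^b(B_i,C_k)$ degenerate exactly at $s=2$ correctly identifies why this case must be treated separately from the $s\geq 3$ argument.
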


Note that the computations for the case $s=1$ were carried out in Example \ref{example}. 

\

Therefore, we assume from now on that $s\geq 3$. Our strategy for computing the holonomy consist on finding two sets of linearly independent homomorphisms belonging to $\mathfrak{hol}^b(M)$ (see Lemma~\ref{li-1} and Lemma~\ref{li-2}) with a suitable number of elements.


\begin{lemma}\label{li-1}
The elements of the subset \[ \mathcal{U}:=\{R^b(B_i,C_1), R^b(B_i,C_2)\}_{1\leq i\leq s} \cup \{R^b(B_i,B_j)\}_{1\leq i<j\leq s}\] 
of $\mathfrak{hol}^b(M)$ are linearly independent. \end{lemma}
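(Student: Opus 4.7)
The plan is to suppose a vanishing linear combination
\[ \sum_{i=1}^s a_i R^b(B_i,C_1) + \sum_{i=1}^s b_i R^b(B_i,C_2) + \sum_{1\leq i<j\leq s} c_{ij} R^b(B_i,B_j) = 0 \]
in $\mathfrak{gl}(\frg)$, and to conclude that all coefficients vanish by evaluating this endomorphism on carefully chosen basis vectors. The idea is to use the fact, visible in Proposition~\ref{Rb-OT}, that $R^b(B_i,B_j)$ annihilates $C_1$, so that evaluation at $C_1$ immediately isolates the $a_i$ and $b_i$. A second evaluation on the vectors $A_k$ then deals with the $c_{ij}$.

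First I would apply the combination to $C_1$. Using $R^b(B_i,C_1)C_1 = \frac{1}{2(s+1)}B_i - \frac14 JA$, $R^b(B_i,C_2)C_1 = \frac{1}{2(s+1)}A_i - \frac14 A$, and $R^b(B_i,B_j)C_1 = 0$, together with $A = \frac{1}{s+1}\sum_k A_k$ and $JA=\frac{1}{s+1}\sum_k B_k$, the result rewrites as a linear combination of the linearly independent vectors $\{A_i,B_i\}_{1\leq i\leq s}$. Reading off the coefficient of $B_i$ gives $a_i = \tfrac12 \sum_k a_k$ for every $i$; summing in $i$ yields $(s-2)\sum_k a_k = 0$, which since $s\geq 3$ forces $\sum_k a_k = 0$ and hence $a_i=0$ for all $i$. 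The identical argument on the $A_i$-coefficients gives $b_i=0$.

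Once the $a_i,b_i$ are gone, I am left with $\sum_{i<j}c_{ij}R^b(B_i,B_j)=0$. I would now evaluate at $A_k$ for each $k$, using
\[ R^b(B_i,B_j)A_k = \frac{(1+\delta_{jk})A_i - (1+\delta_{ik})A_j}{s+1}. \]
For fixed $k$ and $l$, the coefficient of $A_l$ in the resulting expansion reduces to $S_l + c_{lk}\mathbf{1}_{k>l} - c_{kl}\mathbf{1}_{k<l}$, where $S_l := \sum_{j>l}c_{lj} - \sum_{i<l}c_{il}$. Choosing $k=l$ yields $S_l=0$ for every $l$; then choosing $k>l$ forces $c_{lk}=0$ and $k<l$ forces $c_{kl}=0$, so all $c_{ij}$ vanish, concluding the argument.

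The main obstacle, if any, is bookkeeping: organising the index manipulation in the second step so that the structure of the linear system is transparent. Nothing deep is needed beyond the explicit curvature formulas of Proposition~\ref{Rb-OT}, the hypothesis $s\geq 3$ (used crucially in the single step $(s-2)\sum_k a_k = 0 \Rightarrow \sum_k a_k = 0$), and the linear independence of the basis $\{A_i,B_i,C_1,C_2\}$.
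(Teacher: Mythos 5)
Your proof is correct, and it follows the same two--stage strategy as the paper (first decouple and kill the $R^b(B_i,C_k)$ coefficients, then the $R^b(B_i,B_j)$ ones, in both cases by exploiting the block structure of the curvature operators from Proposition~\ref{Rb-OT} and the hypothesis $s\geq 3$ via a relation of the form $(s-2)\sum_k a_k=0$). The difference is in the test vectors: for the first stage the paper extracts the $C_1$- and $C_2$-components of $T(A_k)$, using $R^b(B_i,C_2)A_j=\frac{s-1-2\delta_{ij}}{4(s+1)}C_1$, whereas you evaluate the whole combination on $C_1$ and use that $R^b(B_i,B_j)C_1=0$; both lead to an analogous ``diagonal plus rank one'' linear system solved by the same summation trick. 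For the second stage your bookkeeping is actually cleaner: isolating the coefficient of $A_l$ in $T(A_k)$ as $S_l+c_{lk}\mathbf{1}_{k>l}-c_{kl}\mathbf{1}_{k<l}$ and specializing first to $k=l$ and then to $k\neq l$ avoids the paper's comparison of the residual constant vector $v$ for two different values of $k$. Nothing is gained or lost in generality; both arguments use exactly the same curvature data and the same restriction $s\neq 2$.
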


\begin{proof}
Consider a linear combination of elements of $\mathcal U$
\[ T:= \sum_{1\leq i<j\leq s} x_{ij}R^b(B_i,B_j)+\sum_{i=1}^s c_i R^b(B_i,C_1)+\sum_{i=1}^s d_i R^b(B_i,C_2),  \]
and assume $T=0$. 

For any $k=1,\ldots,s$, we look for the coefficient of $C_1$ in $T(A_k)=0$ and we obtain, according to Proposition \ref{Rb-OT},
\[  (s-3)d_k+(s-1)\sum_{i\neq k} d_i=0.  \]
This implies
\begin{equation}\label{dk} 
	-2d_k+(s-1)\sum_i d_i=0,
\end{equation}
and summing this equality over all $k=1,\ldots,s$, we get
\[ -2\sum_k d_k+s(s-1)\sum_k d_k=0, \]
which is equivalent to
\[ (s+1)(s-2)\sum_k d_k=0.\]
Since $s\geq 3$, we deduce that $\sum_k d_k=0$, which together with \eqref{dk} gives $d_k=0$ for all $k$. 

Taking into account the coefficient of $C_2$ in $T(A_k)=0$  we obtain, in a similar fashion, that $c_k=0$ for all $k$.

Now, $T(A_k)=0$ is equivalent to $\sum_{i<j} x_{ij} R^b(B_i,B_j)(A_k)=0$, which when expanded gives
\[ \sum_{i<j} x_{ij}(A_i-A_j)+\sum_{i<k} x_{ik} A_i-\sum_{j>k} x_{kj} A_j=0,  \]
for all $k$. Note that the first sum in the equation above is independent of $k$, so that 
\[ \sum_{i<k} x_{ik} A_i-\sum_{j>k} x_{kj} A_j=v \]
for some constant vector $v\in\text{span}\{A_1,\ldots,A_s\}$, for any $k$. Fix now a pair $(p,q)$ with $1\leq p<q\leq s$, we have then
\[  \sum_{i<p} x_{ip} A_i-\sum_{j>p} x_{pj} A_j= \sum_{i<q} x_{iq} A_i-\sum_{j>q} x_{qj} A_j.\]
The coefficient of $A_q$ in the left-hand side is $-x_{pq}$, whereas on the right-hand side is $0$. Therefore $x_{pq}=0$ for all $1\leq p<q\leq s$, and the proof is complete.
\end{proof}

\

According to Lemma \ref{li-1}, we can forget about the operators $R^b_{AC1}$ and $R^b_{AC2}$ when searching for a basis of $\mathfrak{hol}^b(M)$.
We should also analyze the commutators between any two curvature operators.  We will prove that we need not compute all these commutators, but only the ones appearing in the next result.

\ 

For any $i=1,\ldots,s$, let $S_i$ denote the endomorphism of $\frg$ defined as follows:
\[ S_i(A_j)=\delta_{ij}\left(-sB_j+\sum_{k\neq j} B_k\right), \qquad S_i(C_1)=-\frac12 C_2, \qquad S_iJ=JS_i.\]  
It is easy to verify that $S_i$ is skew-symmetric, and therefore $S_i\in \mathfrak{u}(\frg)$. Moreover, the following result relates them with the curvature operators $R^b(A_i,B_i)$:

\begin{lemma}\label{Si}
The endomorphims $\{S_i \}_{1\leq i\leq s}$ in $\mathfrak{u}(\frg)$ are linearly independent and, furthermore, 
\[ \text{span}\{S_i \}_{1\leq i\leq s}=\text{span}\{R^b(A_i,B_i)\}_{1\leq i\leq s}.\]
In particular, $S_i\in\mathfrak{hol}^b(M)$.
\end{lemma}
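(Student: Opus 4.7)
The plan is to establish the explicit identity
\begin{equation*}
S_i \;=\; \sum_{k=1}^{s} R^b(A_k,B_k) \;-\; (s+1)\,R^b(A_i,B_i), \qquad i=1,\ldots,s,
\end{equation*}
from which both assertions of the lemma will follow by a short linear-algebra argument. Once this identity is in place, the inclusion $\operatorname{span}\{S_i\}\subseteq\operatorname{span}\{R^b(A_k,B_k)\}\subseteq\mathfrak{hol}^b(M)$ is immediate from Theorem \ref{AS2}, yielding in particular the final clause of the statement.

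To verify the identity, I would check both sides on each element of the basis $\{A_j,B_j,C_1,C_2\}$. The key case is $A_j$: Proposition \ref{Rb-OT} gives $R^b(A_i,B_i)A_j=(1+\delta_{ij})(B_j-JA)$, and summing over $i$ yields $\sum_k R^b(A_k,B_k)A_j=(s+1)(B_j-JA)$, so the right-hand side of the identity evaluates to $-(s+1)\delta_{ij}(B_j-JA)$. On the other hand, using $JA=\tfrac{1}{s+1}\sum_k B_k$, one rewrites $S_i(A_j)=\delta_{ij}\bigl(-sB_j+\sum_{k\neq j}B_k\bigr)=-(s+1)\delta_{ij}(B_j-JA)$, matching. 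The check on $C_1$ reduces to $\tfrac{s}{2}C_2-\tfrac{s+1}{2}C_2=-\tfrac12 C_2=S_i(C_1)$, and the remaining cases $B_j$, $C_2$ follow automatically because both $S_i$ and each $R^b(A_k,B_k)$ commute with $J$.

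For linear independence I would evaluate a putative relation $\sum_i\lambda_i S_i=0$ at $A_j$: only the $i=j$ term survives, giving $-(s+1)\lambda_j(B_j-JA)=0$, and since $\{B_1,\ldots,B_s\}$ is a basis we have $B_j-JA\neq 0$, hence $\lambda_j=0$. For the reverse span inclusion, I would sum the identity over $i$ to obtain $\sum_i S_i=-\sum_k R^b(A_k,B_k)$ and substitute back to solve
\begin{equation*}
R^b(A_i,B_i)\;=\;-\frac{1}{s+1}\Bigl(S_i+\sum_{k=1}^{s}S_k\Bigr),
\end{equation*}
completing the equality of spans. The only mildly lengthy step is the basis-by-basis verification of the main identity; beyond that the argument is purely formal, and I do not anticipate a genuine obstacle.
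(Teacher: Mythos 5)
Your proof is correct and takes essentially the same route as the paper: your identity $S_i=\sum_k R^b(A_k,B_k)-(s+1)R^b(A_i,B_i)$ is exactly the paper's formula $S_i=-sR^b(A_i,B_i)+\sum_{k\neq i}R^b(A_k,B_k)$, and your inverse relation $R^b(A_i,B_i)=-\tfrac{1}{s+1}(S_i+\sum_k S_k)$ coincides with the paper's $-\tfrac{1}{s+1}(2S_i+\sum_{k\neq i}S_k)$. The basis-by-basis verification and the independence argument are both sound, so nothing further is needed.
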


\begin{proof}
The fact that $\{S_1,\ldots, S_s\}$ are linearly independent is very easy to verify. As for the second statement, it is a consequence of the following expressions:
\[ R^b(A_i,B_i)=-\frac{1}{s+1}\left(2S_i+\sum_{k\neq i} S_k\right),\]
and 
\[ S_i=-sR^b(A_i,B_i)+\sum_{k\neq i}R^b(A_k,B_k).\]
\end{proof}

\ 

Let us consider now the endomorphisms $T_{ij}\in\mathfrak{hol}^b(M)$, $i\neq j$, defined by
\[ T_{ij}=[S_i,R^b(B_i,B_j)]. \]
Direct computations prove the following:

\begin{lemma}\label{Tij-new}
The operators $T_{ij}$, $i\neq j$, act on $\frg$ as follows:
\begin{itemize}
	\item $T_{ij}(A_i)=\frac{1}{s+1}\left(-sB_i-s B_j+\sum_{k\neq i,j}B_k\right)$, 
	\item $T_{ij}(A_j)=\frac{1}{s+1}\left(-2s B_i+2\sum_{k\neq i}B_k\right)$,
	\item $T_{ij}(A_l)=\frac{1}{s+1}\left(-sB_i+\sum_{k\neq i}B_k\right)$, for $l\neq i,j$,
	\item $T_{ij}(C_1)=0$.
\end{itemize}
The missing values can be deduced from $T_{ij}J=JT_{ij}$.
\end{lemma}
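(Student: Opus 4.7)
The plan is to compute $T_{ij} = [S_i, R^b(B_i, B_j)] = S_i \circ R^b(B_i, B_j) - R^b(B_i, B_j) \circ S_i$ directly on the basis vectors. Since $S_i$ commutes with $J$ by construction and $R^b(B_i,B_j)$ commutes with $J$ by Lemma~\ref{curvatura-JJ}(a), the commutator $T_{ij}$ also commutes with $J$, so it suffices to evaluate it on $A_1, \ldots, A_s$ and $C_1$.

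First I would assemble the necessary auxiliary formulas. Proposition~\ref{Rb-OT} only specifies $R^b(B_i,B_j)$ on $A_k$ and $C_1$; applying $R^b(X,Y)J = JR^b(X,Y)$ extends this to $R^b(B_i,B_j)B_k = \tfrac{1}{s+1}\bigl((1+\delta_{jk})B_i - (1+\delta_{ik})B_j\bigr)$ and $R^b(B_i,B_j)C_2 = 0$. Likewise, the defining relations for $S_i$ together with $S_iJ = JS_i$ yield $S_i(B_j) = \delta_{ij}\bigl(sA_j - \sum_{k\neq j} A_k\bigr)$ and $S_i(C_2) = \tfrac{1}{2}C_1$.

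Three of the four cases then collapse quickly. For $X = C_1$ we have $R^b(B_i,B_j)C_1 = 0$ and $S_iC_1 \in \operatorname{span}\{C_2\}$, which $R^b(B_i,B_j)$ also annihilates, so $T_{ij}(C_1) = 0$. For $X = A_l$ with $l \neq i$, one has $S_iA_l = 0$, so only $S_i(R^b(B_i,B_j)A_l)$ survives; expanding $R^b(B_i,B_j)A_l$ using Proposition~\ref{Rb-OT} and applying $S_i$ termwise (noting that $S_iA_k$ vanishes unless $k = i$) yields the claimed formulas for $T_{ij}(A_j)$ and $T_{ij}(A_l)$ with $l \neq i,j$ essentially for free.

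The only substantive computation is $T_{ij}(A_i)$, where both halves of the commutator contribute. Here one must expand
\[
R^b(B_i,B_j)(S_iA_i) \;=\; R^b(B_i,B_j)\Bigl(-sB_i + B_j + \sum_{k\neq i,j} B_k\Bigr)
\]
using the extended formulas for $R^b(B_i,B_j)$ on the $B$-vectors, and then collect coefficients of $B_i$ and $B_j$; a short arithmetic check shows this simplifies to $B_j$. Subtracting from $S_i(R^b(B_i,B_j)A_i) = \tfrac{1}{s+1}(A_i - 2A_j)$ pushed through $S_i$ then produces the stated $\tfrac{1}{s+1}\bigl(-sB_i - sB_j + \sum_{k\neq i,j} B_k\bigr)$. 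The main obstacle is just the bookkeeping in this final cancellation — once the two extensions in the first step are written down, the rest is mechanical.
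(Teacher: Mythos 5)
Your computation is correct and follows exactly the route the paper intends (the paper simply states ``direct computations prove the following''): extend $S_i$ and $R^b(B_i,B_j)$ to the $B$-vectors and $C_2$ via commutation with $J$, note that $S_iA_l=0$ for $l\neq i$ so three cases reduce to a single application of $S_i$, and carry out the one genuine cancellation in $T_{ij}(A_i)$, where $R^b(B_i,B_j)\bigl(-sB_i+B_j+\sum_{k\neq i,j}B_k\bigr)=B_j$ indeed checks out and yields the stated formula. Nothing to add.
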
		

\medskip

\begin{lemma}\label{li-2}
The elements of the subset \[ \mathcal{V}:=\{S_i\}_{1\leq i\leq s}\cup \{R^b(C_1,C_2)\} \cup\{T_{ij}\}_{1\leq i<j\leq s}\] of $\mathfrak{hol}^b(M)$ 
are linearly independent.
\end{lemma}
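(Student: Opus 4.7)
The plan is to suppose a linear combination $T = \sum_{k=1}^{s}\alpha_k S_k + \beta R^b(C_1,C_2) + \sum_{1\le i<j\le s}\gamma_{ij}T_{ij} = 0$ and evaluate $T$ on the distinguished basis elements $C_1, A_1,\ldots,A_s$, extracting enough scalar identities to force every coefficient to vanish. The first step is immediate: Lemma~\ref{Tij-new} gives $T_{ij}(C_1)=0$, while by Proposition~\ref{Rb-OT} and the definition of $S_k$ one has $S_k(C_1) = -\tfrac12 C_2$ and $R^b(C_1,C_2)(C_1) = \tfrac{s}{2(s+1)}C_2$, so $T(C_1)=0$ is equivalent to the single relation
\[
\sum_{k=1}^{s}\alpha_k = \frac{s}{s+1}\beta,\qquad (\ast)
\]
which I would keep aside as one scalar constraint to be used at the end.

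Next I would evaluate $T(A_k)=0$ for each $k$ and extract the coefficient $p(k,l)$ of $B_l$ in $T(A_k)$. Using the identity $JA=\tfrac{1}{s+1}\sum_l B_l$, one has $S_k(A_k) = (s+1)(JA-B_k)$ (and $S_k(A_l)=0$ for $l\ne k$), $R^b(C_1,C_2)(A_k)=-\tfrac12 JA$, and the three cases for $T_{ij}(A_k)$ recorded in Lemma~\ref{Tij-new}. The system $p(k,l)=0$ produces $s^2$ linear equations, which I would split into the $s$ \emph{diagonal} ones ($l=k$) and the $s(s-1)$ \emph{off-diagonal} ones ($l\ne k$).

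The core of the argument is the off-diagonal analysis. Introducing $\sigma_k^+=\sum_{j>k}\gamma_{kj}$, $\sigma_k^-=\sum_{i<k}\gamma_{ik}$, $\Sigma=\sum_{i<j}\gamma_{ij}$ and $N_k=-\tfrac{\beta}{2(s+1)}+\tfrac{\Sigma+\sigma_k^-}{s+1}$, each off-diagonal equation takes the form $\sigma_l^+ + \gamma_{\{k,l\}} = h_k$ with $\gamma_{\{k,l\}}:=\gamma_{\min(k,l),\max(k,l)}$ and $h_k:=\alpha_k+N_k$. Writing this for $(k,l)=(p,q)$ and $(k,l)=(q,p)$ and subtracting shows $h_k+\sigma_k^+$ is independent of $k$, say equal to a constant $C$; adding then yields the clean identity
\[
\gamma_{pq} = C - \sigma_p^+ - \sigma_q^+ \qquad (p\ne q).
\]
Substituting into the recursion $\sigma_k^+ = \sum_{j>k}\gamma_{kj}$ and unwinding downward from $\sigma_s^+=0$ produces $\sigma_k^+ = C/2$ for $k<s$, and hence $\Sigma = \tfrac{s-1}{2}C$, $\sigma_k^- = 0$ for $k<s$, $\sigma_s^- = \Sigma$.

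It remains to merge the above with the diagonal equations and $(\ast)$. The diagonal equation at $k$ reads $\sigma_k^+ = N_k - s\alpha_k$; together with $\alpha_k + N_k = C - \sigma_k^+$ from the off-diagonal analysis this pins down $\alpha_k = (C-2\sigma_k^+)/(s+1)$ and $N_k = (sC-(s-1)\sigma_k^+)/(s+1)$. Matching the latter against the definition of $N_k$ and substituting the explicit values of $\sigma_k^\pm$ forces $\beta = -4\Sigma/(s-1)$; the $\alpha_k$-formula then gives $\alpha_k=0$ for $k<s$ and $\alpha_s = 2\Sigma/(s^2-1)$. Plugging these into $(\ast)$ yields $(4s+2)\Sigma=0$, hence $\Sigma=0$, and consequently $\beta=0$, $\gamma_{ij}=0$ for all $i<j$, and $\alpha_k=0$ for all $k$. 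The main obstacle will be the combinatorial bookkeeping: isolating the clean identity $\gamma_{pq}=C-\sigma_p^+-\sigma_q^+$ from the off-diagonal system, and carrying out the downward induction solving for $\sigma_k^+$, are the delicate steps, while the closing linear-algebra computation is routine.
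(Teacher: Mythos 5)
Your proof is correct; I checked the coefficient extraction (in particular the identity $p(k,l)=h_k-\gamma_{\{k,l\}}-\sigma_l^+$ for $l\neq k$, the downward recursion giving $\sigma_k^+=C/2$ for $k<s$, and the final substitution into $(\ast)$ yielding $(2s+1)(s-1)C=0$), and every step goes through under the standing assumption $s\geq 3$ (your division by $s-1$ and your use of an index $k<s$ both need $s\geq 2$, so this is harmless). The setup is the same as the paper's --- assume a vanishing linear combination, evaluate on $C_1$ and on the $A_k$, extract the coefficients of the $B_l$, and introduce the partial sums $\sigma_l^-,\sigma_l^+,Z_l$ (the paper's $X_l,Y_l,Z_l$) --- but you solve the resulting homogeneous system by a genuinely different elimination. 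The paper first kills the coefficient of $R^b(C_1,C_2)$ by summing the diagonal equations, then sums the full system to get $\sum_{i<j}a_{ij}=0$, and finally runs an induction on $l$ using the individual off-diagonal equations to show $X_l=Y_l=0$. You instead exploit the symmetry of the off-diagonal equations in $(k,l)$ to isolate the structural identity $\gamma_{pq}=C-\sigma_p^+-\sigma_q^+$, which lets you solve the recursion for the $\sigma_k^+$ in closed form and defer both the diagonal equations and the $C_1$-relation to a short closing computation. Your route trades the paper's induction for an explicit solution of the system; it is arguably more transparent about \emph{why} the only freedom left at the end is the single constant $C$, while the paper's version reaches $y=0$ earlier and avoids carrying $C$ through the whole argument. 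Either way the content is the same linear algebra, so this is a legitimate alternative proof rather than a shortcut or a generalization.
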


\begin{proof}
Let us consider a linear combination of elements of $\mathcal V$
\[ S:=\sum_{i=1}^s x_i S_i + y R^b(C_1,C_2) + \sum_{1\leq i<j\leq s} a_{ij}T_{ij},\]
for some $x_i,y,a_{ij}\in\R$, and assume $S=0$. 
Let us see first that $y=0$.

It follows from $S(C_1)=0$ that 
\begin{equation}\label{suma-x}
	\sum_i x_i=\frac{s}{s+1}y. 
\end{equation}

Fix now $l\in\{1,\ldots,s\}$, and consider $S(A_l)=0$: 
\begin{equation}\label{SAl}
\begin{array}{lll}
S(A_l)& =& x_l(-s\,B_l + \displaystyle\sum_{k\neq l} B_k) - \frac{y}{2(s+1)}(B_1+\cdots+B_s) \\[5pt] && + \frac{1}{s+1} \sum_{i<l} a_{il} (-2s B_i + 2 \displaystyle\sum_{k\neq i} B_k)\\[5pt]
&& + \frac{1}{s+1} \displaystyle\sum_{j>l} a_{lj} (-s B_l -s B_j + \displaystyle\sum_{k\neq l,j} B_k) + \frac{1}{s+1} \displaystyle\sum_{l\neq i<j \neq l} a_{ij} (-s B_i +  \displaystyle\sum_{k\neq i} B_k)\\& =& 0.
\end{array}
\end{equation}

The coefficient of $B_l$ in \eqref{SAl} is zero and, using Lemmas \ref{Si} and \ref{Tij-new}, it is given by
\begin{equation}\label{Bl} 
-s x_l-\frac{y}{2(s+1)}+\frac{2}{s+1}X_l-\frac{s}{s+1}Y_l+\frac{1}{s+1}Z_l=0, 
\end{equation}
where 
\[ X_l=\sum_{i<l} a_{il}, \qquad Y_l=  \sum_{j>l} a_{lj}, \qquad Z_l=\sum_{l\neq i<j\neq l}a_{ij}. \]
Note that $X_l+Y_l+Z_l=\sum_{i<j}a_{ij}$ for all $l$ and, moreover,
\[ \sum_{l=1}^s X_l=\sum_{i<j}a_{ij}, \quad \sum_{l=1}^s Y_l=\sum_{i<j}a_{ij}, \quad \sum_{l=1}^s Z_l=(s-2)\sum_{i<j}a_{ij}.\]
Summing \eqref{Bl} over $l=1,\ldots,s$ and using \eqref{suma-x} we arrive at
\[ -s\frac{s}{s+1}y-s\frac{y}{2(s+1)}+\frac{2}{s+1}
\sum_{i<j}a_{ij}-\frac{s}{s+1}\sum_{i<j} a_{ij}+\frac{s-2}{s+1}\sum_{i<j}a_{ij}=0. \]
From this we deduce
\[  \frac{-s(2s+1)}{2(s+1)}y=0, \]
thus 
\[ y=0. \]

Fixing again $l\in\{1,\ldots,s\}$, we compute now the coefficient of $B_r$ in \eqref{SAl}. Using Proposition \ref{Rb-OT}, Lemmas \ref{Si} and \ref{Tij-new} and $y=0$ we obtain the system:

\begin{align}
	-s x_l+\frac{2}{s+1}X_l-\frac{s}{s+1}Y_l+\frac{1}{s+1}Z_l & =0,  \qquad \text{for } r=l, \nonumber\\
	x_l-a_{rl}+\frac{2}{s+1}X_l+\frac{1}{s+1}Y_l+\frac{1}{s+1}Z_l & =Y_r, \qquad \text{for } r<l,\label {system}\\
	x_l-a_{lr}+\frac{2}{s+1}X_l+\frac{1}{s+1}Y_l+\frac{1}{s+1}Z_l & =Y_r, \qquad \text{for } r>l.\nonumber
\end{align}

Summing all the equations in the system \eqref{system} we obtain
\begin{equation*}
-x_l-X_l-Y_l+\frac{2s}{s+1}X_l-\frac{1}{s+1}Y_l+\frac{s}{s+1}Z_l= \sum_{r\neq l} Y_r,
\end{equation*}
and, since $\sum_{r\neq l}Y_r=\sum_{r}Y_r-Y_l=\sum_{i<j}a_{ij}-Y_l$, we deduce
\begin{equation}\label{xl}
	x_l=\frac{s-1}{s+1}X_l-\frac{1}{s+1}Y_l+\frac{s}{s+1}Z_l-\sum_{i<j}a_{ij}.
\end{equation}
Summing \eqref{xl} over $l=1,\ldots, s$, and recalling \eqref{suma-x} with $y=0$, we get
\[ 0=\sum_l x_l =\sum_{l=1}^s\left(\frac{s-1}{s+1}X_l-\frac{1}{s+1}Y_l+\frac{s}{s+1}Z_l -\sum_{i<j}a_{ij} \right)=-2\sum_{i<j} a_{ij}. \]
Thus,
\begin{equation}\label{suma-a}
	\sum_{i<j} a_{ij}=0,
\end{equation}
which is equivalent to 
\[	X_l+Y_l+Z_l=0,  \]
for all $l$. Now, replacing $Z_l=-X_l-Y_l$ in \eqref{xl} and using \eqref{suma-a} we arrive at
\begin{equation}\label{xl-bis}
	x_l=-\frac{1}{s+1}X_l-Y_l.
\end{equation}
Now, if we replace this value of $x_l$ in the first equation of the system \eqref{system}, we arrive at
\begin{equation}\label{eq-2}
	X_l+(s-1)Y_l=0,
\end{equation}
for all $l=1,\ldots,s$.

\medskip

\textsl{Claim:} $X_l=Y_l=Z_l=0$ for all $l=1,\ldots,s$. Observe that it suffices to prove that $X_l = Y_l = 0$.

\smallskip

The proof of the claim follows by induction on $l$. We begin with the case $l=1$. It is clear that $X_1=0$, and it follows from \eqref{eq-2} for $l=1$ that $Y_1=0$. The case $l=1$ is proved.

Next, fix $1<l\leq s$ and assume that $X_r=Y_r=Z_r=0$ for all $r<l$. With this hypothesis, the equations corresponding to $r<l$ in the system \eqref{system} can be written as
\[ 	  x_l-a_{rl}+\frac{1}{s+1}X_l=0. \]
Substituting the value of $x_l$ from \eqref{xl-bis}, we obtain that 
\[ Y_l=- a_{rl}. \]
Summing over $r<l$, we obtain
\[ (l-1)Y_l = -X_l,\] 
which together with \eqref{eq-2} gives 
\[ X_l=Y_l=0, \quad \text{for} \quad l\leq s-1.\]
For $l=s$ we simply need to point out that $Y_s=0$, due to the very definition of $Y_s$. It follows from \eqref{eq-2} that $X_s=0$ and hence the claim is proved.

\

We notice next that the claim just proved and \eqref{xl-bis} imply that $x_l=0$ for all $l$. Now it is clear from the system \eqref{system} that $a_{ij}=0$ for all $i<j$.
\end{proof}

\

\begin{lemma}\label{lema-final}
The subset $\mathcal{U}\cup \mathcal{V}$ of $\mathfrak{hol}^b(M)$ is linearly independent.
\end{lemma}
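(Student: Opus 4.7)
The plan is to exploit the fact that, when evaluated on the vectors $A_k$, operators from $\mathcal U$ and operators from $\mathcal V$ output to complementary subspaces of $\frg$; this decouples the alleged linear dependence into separate systems, each of which falls to arguments already used in the proof of Lemma~\ref{li-1} or to Lemma~\ref{li-2} itself.

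Concretely, suppose a linear combination
\begin{align*}
T &:= \sum_{i<j} x_{ij}\,R^b(B_i,B_j) + \sum_i c_i\,R^b(B_i,C_1) + \sum_i d_i\,R^b(B_i,C_2) \\
 &\quad + \sum_i \alpha_i\,S_i + \beta\,R^b(C_1,C_2) + \sum_{i<j} \gamma_{ij}\,T_{ij}
\end{align*}
is identically zero on $\frg$. I would first evaluate $T$ on each $A_k$ and decompose the result in the basis $\{A_l\}\cup\{B_l\}\cup\{C_1,C_2\}$. Reading off Proposition~\ref{Rb-OT}, Lemma~\ref{Si} and Lemma~\ref{Tij-new}, one checks that $R^b(B_i,B_j)(A_k)\in\text{span}\{A_l\}$, that $R^b(B_i,C_2)(A_k)\in\text{span}\{C_1\}$, that $R^b(B_i,C_1)(A_k)\in\text{span}\{C_2\}$, while $S_i(A_k)$, $R^b(C_1,C_2)(A_k)$ and $T_{ij}(A_k)$ all lie in $\text{span}\{B_l\}$. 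Hence the vanishing of $T(A_k)$ splits into four independent equations: an $A$-component involving only the $x_{ij}$, a $C_1$-component involving only the $d_i$, a $C_2$-component involving only the $c_i$, and a $B$-component involving only $\alpha_i,\beta,\gamma_{ij}$.

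The $C_1$-component yields $\sum_i d_i(s-1-2\delta_{ik})=0$ for every $k$, which is precisely the system~\eqref{dk} appearing in the proof of Lemma~\ref{li-1}; for $s\geq 3$ it forces $d_i=0$. The $C_2$-component is entirely symmetric and gives $c_i=0$. The $A$-component then reads $\sum_{i<j}x_{ij}\,R^b(B_i,B_j)(A_k)=0$ for every $k$, and the closing argument in the proof of Lemma~\ref{li-1} (the one extracting $x_{pq}$ by inspecting coefficients of $A_q$) delivers $x_{ij}=0$. What is left of $T$ is then a linear combination of the elements of $\mathcal V$ alone, and applying Lemma~\ref{li-2} directly kills the remaining coefficients $\alpha_i$, $\beta$, $\gamma_{ij}$.

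The only substantive step is verifying the ``disjoint output sector'' observation in the second paragraph; once this is noted, the proof is merely a recycling of the two preceding linear-independence statements, so I do not expect any real obstacle. The lemma is, in effect, a merger of Lemmas~\ref{li-1} and~\ref{li-2} facilitated by the separation of $T(A_k)$ into components living in $\text{span}\{A_l\}$, $\text{span}\{C_1\}$ and $\text{span}\{C_2\}$.
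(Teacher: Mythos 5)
Your proposal is correct and follows essentially the same route as the paper: the paper's proof is the one-line observation that, by examining how each operator acts on $\frg$, linear independence of $\mathcal{U}\cup\mathcal{V}$ reduces to that of $\mathcal{U}$ and $\mathcal{V}$ separately, which is exactly the ``disjoint output sector'' decoupling you make explicit by decomposing $T(A_k)$ into its components along $\operatorname{span}\{A_l\}$, $\operatorname{span}\{B_l\}$, $\operatorname{span}\{C_1\}$ and $\operatorname{span}\{C_2\}$. Your extra care in re-running the relevant steps of the proof of Lemma~\ref{li-1} (rather than citing its statement, which concerns the full operators and not their components) is exactly the right level of rigor.
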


\begin{proof}
Analyzing the action on $\frg$ of each of the operators in $\mathcal{U}\cup \mathcal{V}$, it is easy to verify that the linear independence of $\mathcal{U}\cup \mathcal{V}$ is equivalent to the linear independence of $\mathcal{U}$ and $\mathcal{V}$ separately. Thus this result follows from Lemmas \ref{li-1} and \ref{li-2}.
\end{proof}

\

With these lemmas we are able to finally prove the main result of this section.

\begin{theorem}\label{hol-OT}
The holonomy group of the Bismut connection $\nabla^b$ on an OT manifold $M$ of type $(s,1)$ (hence of dimension $2s+2$) is  $\operatorname{Hol}^b(M)=\operatorname{U}(s+1)$, for any $s\in\N$. Therefore, there is no reduction of the Bismut  holonomy. \end{theorem}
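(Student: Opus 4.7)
The plan is to combine the universal upper bound on the Bismut holonomy, coming from Hermiticity of $\nabla^b$, with a dimension-counting argument on elements of $\mathfrak{hol}^b(M)$ already exhibited in the preceding lemmas. Since the real dimension of $M$ is $2(s+1)$, one automatically has $\operatorname{Hol}^b(M)\subseteq \operatorname{U}(s+1)$ and therefore $\dim \mathfrak{hol}^b(M)\le (s+1)^2$. The strategy is then to produce exactly $(s+1)^2$ linearly independent elements of $\mathfrak{hol}^b(M)$ to match this upper bound.

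The low-dimensional cases $s=1$ and $s=2$ are disposed of by invoking Lemma \ref{dim-baja}, so one may assume $s\ge 3$. In this regime, every element of the set $\mathcal{U}\cup \mathcal{V}$ constructed above belongs to $\mathfrak{hol}^b(M)$: the operators in $\mathcal{U}$ and the operator $R^b(C_1,C_2)\in \mathcal{V}$ are curvature endomorphisms and hence lie in the holonomy algebra by Theorem \ref{AS2}; the $S_i$ from $\mathcal{V}$ lie in $\mathfrak{hol}^b(M)$ by Lemma \ref{Si}; and each $T_{ij}=[S_i,R^b(B_i,B_j)]$ is captured by the ``closure under commutators with $\nabla^b_X$'' clause of Theorem \ref{AS2}. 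Lemma \ref{lema-final} already guarantees that $\mathcal{U}\cup \mathcal{V}$ is linearly independent, so the core of the argument reduces to a cardinality check.

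The count is straightforward:
\[ |\mathcal{U}|+|\mathcal{V}| = \left(2s+\binom{s}{2}\right)+\left(s+1+\binom{s}{2}\right) = 3s+1+s(s-1)=(s+1)^2, \]
which matches the upper bound exactly. This forces $\mathfrak{hol}^b(M)=\mathfrak{u}(s+1)$, and hence the restricted holonomy group satisfies $\operatorname{Hol}^b_0(M)=\operatorname{U}(s+1)$, using that $\operatorname{U}(s+1)$ is connected. Combined with $\operatorname{Hol}^b(M)\subseteq \operatorname{U}(s+1)$, this yields $\operatorname{Hol}^b(M)=\operatorname{U}(s+1)$, and in particular no reduction of the Bismut holonomy occurs.

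The main obstacle is not this final assembly, which is little more than arithmetic, but rather the groundwork done in Lemmas \ref{li-1}, \ref{li-2} and \ref{lema-final}. In particular, one genuinely needs the enlargement of the raw span of curvature operators $R^b(X,Y)$ via the commutator operators $T_{ij}$: the curvature operators alone do \emph{not} span a subalgebra of the full dimension $(s+1)^2$, so the non-trivial ``closure under $\nabla^b_X$'' part of Theorem \ref{AS2} must be invoked. The clever replacement of $\{R^b(A_i,B_i)\}$ by the linearly equivalent but more tractable basis $\{S_i\}$ (Lemma \ref{Si}) is what makes the commutators $T_{ij}$ computable in closed form and the final linear-independence verification feasible.
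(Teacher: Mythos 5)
Your proof is correct and follows essentially the same route as the paper: bound $\dim\mathfrak{hol}^b(M)$ above by $(s+1)^2$ via $\mathfrak{hol}^b(M)\subseteq\mathfrak{u}(s+1)$, dispose of $s=1,2$ with Lemma \ref{dim-baja}, and for $s\ge 3$ exhibit the $(s+1)^2$ linearly independent elements $\mathcal{U}\cup\mathcal{V}$ of $\mathfrak{hol}^b(M)$ supplied by Lemmas \ref{li-1}, \ref{li-2} and \ref{lema-final}, so that the two bounds meet and connectedness of $\operatorname{U}(s+1)$ finishes the argument. One small correction to your justification: $T_{ij}=[S_i,R^b(B_i,B_j)]$ lies in $\mathfrak{hol}^b(M)$ not via the ``closure under commutators with $\nabla^b_x$'' clause of Theorem \ref{AS2}, but simply because $\mathfrak{hol}^b(M)$ is a Lie \emph{subalgebra} containing both $S_i$ (a linear combination of curvature operators, by Lemma \ref{Si}) and the curvature operator $R^b(B_i,B_j)$, hence also their bracket.
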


\begin{proof}
If $s=1,2$, then $\mathfrak{hol}^b(M)= \mathfrak{u}(s+1)$, according to Lemma \ref{dim-baja}.

For $s\geq 3$, the cardinal of the subset $\mathcal{U}$ is $2s + \binom{s}{2} = \frac{s^2+3s}{2}$, whereas the cardinal of  $\mathcal{V}$ is $s + 1 + \binom{s}{2} = \frac{s^2+s+2}{2}$. Therefore, the cardinal of the subset of $\mathfrak{hol}^b(M)$ given in Lemma \ref{lema-final} is \[\frac{s^2+3s}{2} + \frac{s^2+s+2}{2} = (s+1)^2,\]
so that $\dim\mathfrak{hol}^b(M)\geq (s+1)^2$. On the other hand, we know that $\mathfrak{hol}^b(M)$ is a subalgebra of $\mathfrak{u}(\frg)\cong \mathfrak{u}(s+1)$, so that $\dim\mathfrak{hol}^b(M)\leq (s+1)^2$. Therefore we arrive at 
\[  \mathfrak{hol}^b(M)=  \mathfrak{u}(\frg)\cong \mathfrak{u}(s+1).\]

Since $\operatorname{U}(s+1)$ is connected, this implies that $\operatorname{Hol}^b(M)=\operatorname{U}(s+1)$, for all $s\geq 1$.
\end{proof}

\ 

\section{Gauduchon connections and the Vaisman condition}

In the last section of the article we study the relation between the Gauduchon connections on an LCK manifold and the Vaisman condition. In fact, we prove that if the Lee form of a compact LCK manifold is non-zero and parallel with respect to a Gauduchon connection $\nabla^t$, then the LCK manifold is Vaisman and, moreover, $t=-1$. In other words, the Lee form can only be parallel with respect to the Bismut connection and in this case it is also parallel with respect to the Levi-Civita connection (recall Theorem \ref{theta-parallel}).

\begin{theorem}\label{gauduchon}
Let $(M,J,g)$ be a connected compact LCK manifold with corresponding Lee form $\theta$ and let $\{\nabla^t\}_{t\in\R}$ be the family of associated Gauduchon connections \eqref{canonical}. If $\nabla^t\theta=0$ for some $t\in\R$ then either:
\begin{enumerate}
    \item[(a)] $\theta=0$, i.e., $(M,J,g)$ is K\"ahler (and therefore $\nabla^t=\nabla^g$ for all $t$), or
    \item[(b)] $(M,J,g)$ is Vaisman and $t=-1$ (therefore $\nabla^{-1}=\nabla^{b}$ is the Bismut connection).
\end{enumerate}
\end{theorem}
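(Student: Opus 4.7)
The plan is to convert the hypothesis $\nabla^{t}\theta=0$ into a pointwise identity for $\nabla^{g}\theta$, trace it, and exploit the compactness of $M$ via Stokes' theorem.

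First I would subtract the Levi-Civita connection from \eqref{canonical}: writing $B^{t}(X,Y):=\nabla^{t}_{X}Y-\nabla^{g}_{X}Y$, one has
\[
g(B^{t}(X,Y),Z)=-\tfrac{t-1}{4}\, d\omega(JX,JY,JZ)-\tfrac{t+1}{4}\, d\omega(JX,Y,Z),
\]
and $(\nabla^{t}_{X}\theta)(Y)=(\nabla^{g}_{X}\theta)(Y)-\theta(B^{t}(X,Y))$. Setting $Z=A$ and expanding $d\omega=\theta\wedge\omega$ as a cyclic sum, a direct computation relying only on the LCK identities $\theta(JA)=g(A,JA)=0$ and $\omega(JU,JV)=\omega(U,V)$ yields
\[
d\omega(JX,JY,JA)=0,\qquad d\omega(JX,Y,A)=J\theta(X)J\theta(Y)+\theta(X)\theta(Y)-|A|^{2}g(X,Y).
\]
Hence only the $(t+1)$-piece survives, and $\nabla^{t}\theta=0$ becomes the pointwise identity
\[
(\nabla^{g}_{X}\theta)(Y)=-\tfrac{t+1}{4}\bigl(J\theta(X)J\theta(Y)+\theta(X)\theta(Y)-|A|^{2}g(X,Y)\bigr).
\]

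Next I would take the $g$-trace in $X=Y$ against an orthonormal frame. Using $|J\theta|^{2}=|\theta|^{2}=|A|^{2}$ and $\dim M=2n$, the right-hand side collapses to $\tfrac{(t+1)(n-1)}{2}|A|^{2}$, while the left-hand side equals $-d^{*}\theta$. Integrating over compact $M$ kills the left-hand side by Stokes, producing
\[
\frac{(t+1)(n-1)}{2}\int_{M}|A|^{2}\,\mathrm{vol}_{g}=0.
\]
Since $n\geq 2$, this forces either $t=-1$ or $A\equiv 0$. In the second case $\theta=0$, so $(M,J,g)$ is K\"ahler and indeed $\nabla^{t}\equiv\nabla^{g}$ for every $t$ because $d\omega=0$. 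In the first case the hypothesis is $\nabla^{-1}\theta=\nabla^{b}\theta=0$, and Theorem~\ref{theta-parallel} upgrades this to $\nabla^{g}\theta=0$, i.e.\ the Vaisman condition.

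The delicate step is the disappearance of the $(t-1)/4$ piece: it vanishes on any LCK manifold because of the identity $\theta(JA)=0$, and this is precisely what singles out the Bismut connection in the argument. Once that cancellation is observed, the proof reduces to one trace and one application of Stokes.
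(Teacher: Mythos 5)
Your proof is correct, and while the first half coincides with the paper's argument, the second half takes a genuinely different route. Like the paper, you reduce $\nabla^t\theta=0$ to the pointwise identity
\begin{equation*}
(\nabla^g_X\theta)(Y)=-\tfrac{t+1}{4}\bigl(J\theta(X)J\theta(Y)+\theta(X)\theta(Y)-|A|^2 g(X,Y)\bigr),
\end{equation*}
after observing that the $(t-1)$-term drops out (the paper phrases this via Corollary~\ref{torsion-A}, i.e.\ $c(A,\cdot,\cdot)=0$, which is the same computation you carry out by hand; note that the vanishing also uses the mutual cancellation of the two terms not involving $\theta(JA)$, not just $\theta(JA)=0$). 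From that point on the two arguments diverge. The paper reads off from this identity that $\nabla^g A$ is symmetric and commutes with $J$, invokes \cite[Lemma 3]{MMO} to conclude that $A$ is holomorphic, uses that $|A|$ is constant (since $\theta$ is parallel for a metric connection), and then applies \cite[Theorem 1(i)]{MMO} to deduce the Vaisman condition on the compact manifold; only afterwards does it obtain $t=-1$ by evaluating the identity on a vector orthogonal to $A$ and $JA$. You instead take the $g$-trace, integrate over the compact manifold, and obtain $\tfrac{(t+1)(n-1)}{2}\int_M|A|^2=0$ directly, which forces $t=-1$ or $\theta\equiv 0$; in the first case Theorem~\ref{theta-parallel} upgrades $\nabla^b\theta=0$ to the Vaisman condition. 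Your route is more self-contained (it bypasses the external results of \cite{MMO} entirely, using compactness only through Stokes and relying otherwise on Theorem~\ref{theta-parallel}, which is proved in the paper), whereas the paper's route yields along the way the additional geometric information that the Lee field is holomorphic of constant length.
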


\begin{proof}
As usual, let us denote by $A$ the vector field on $M$ which is $g$-dual to $\theta$. Then it follows from \eqref{canonical} that, for any $X,Y\in\mathfrak{X}(M)$,
\begin{align*}
    (\nabla^t_X\theta)(Y) & = g(\nabla^t_X A,Y) \\
                          & = g(\nabla^g_X A,Y)-\frac{t-1}{4} d\omega (JX,JA,JY) -\frac{t+1}{4}d\omega (JX,A,Y).
\end{align*}
From \eqref{3-form} we obtain that $d\omega(JX,JA,JY)=c(X,A,Y)$, where $c$ is the torsion $3$-form of the corresponding Bismut connection. Taking now into account Corollary \ref{torsion-A} we arrive at $d\omega(JX,JA,JY)=0$ for any $X,Y$. As a consequence the expression for $(\nabla^t_X\theta)(Y)$ becomes
\begin{align*}
    (\nabla^t_X\theta)(Y) & = g(\nabla^g_X A,Y)- \frac{t+1}{4}\theta\wedge \omega (JX,A,Y)\\
                          & =g(\nabla^g_X A,Y)- \frac{t+1}{4}\left(\theta(JX)g(JA,Y)+|A|^2g(X,Y)-g(A,X)g(A,Y)\right).
\end{align*}
Therefore, if $\nabla^t\theta=0$ for some $t\in\R$ then 
\begin{equation}\label{nabla-t}
    \nabla^g_X A = \frac{t+1}{4}\left(|A|^2X-\theta(X)A+\theta(JX)JA\right)
\end{equation}
for any vector field $X$ on $M$. Note that the $(1,1)$-tensor $\nabla^g A$ is symmetric (in accordance with $\theta$ being closed) and, moreover, it commutes with the complex structure $J$. Hence, it follows from \cite[Lemma 3]{MMO} that $A$ is holomorphic, i.e., $\mathcal{L}_AJ=0$.

\smallskip 

Next, we observe that, since $\nabla^t$ is a metric connection and $\theta$ is $\nabla^t$-parallel, $|A|$ is a constant function. That is, $|A|=c$ for some $c\in \R$, $c\geq 0$. If $c=0$ then $\theta=0$ and therefore $(M,J,g)$ is K\"ahler.







\smallskip

Assume now $c>0$. We have proved that $A$ is holomorphic with constant length. According to \cite[Theorem 1(i)]{MMO}, we have that the compact LCK manifold $(M,J,g)$ is actually Vaisman, that is, $\nabla^gA=0$. Choosing $0\neq v\in T_pM$ for some $p\in M$ with $\theta_p(v)=\theta_p(J_pv)=0$, it follows from \eqref{nabla-t} that 
\[ 0=\nabla^g_vA=\frac{t+1}{4}c^2v.\]
Thus $t=-1$, and the proof is complete.
\end{proof}

\medskip

\begin{example}
{\rm There exist compact LCK manifolds which admit a Hermitian connection with respect to which the Lee form is parallel, but the Hermitian structure is not Vaisman. Indeed, consider a solvmanifold $M=\Gamma\backslash G$ from Example \ref{example} equipped with the LCK structure exhibited there. We can define a Hermitian connection $\nabla$ on $G$ in terms of the basis $\{e_1,\ldots,e_4\}$ of left invariant vector fields simply by setting
\[ \nabla_{e_1}e_3=e_4, \quad \nabla_{e_1}e_4=-e_3, \quad \nabla_{e_i}e_j=0,\]
for all other possible choices of $(i,j)$. The Lee form on $G$ is parallel with respect to $\nabla$ and the same happens on $M$ with the induced connection. 
}
\end{example}

\ 

We can generalize Theorem \ref{gauduchon} to a larger class of metric connections. Indeed, in \cite{OUV} a $2$-parameter family $\{\nabla^{\varepsilon,\rho}\mid (\varepsilon,\rho)\in \R^2\}$ of metric connections was introduced on any Hermitian manifold. Inspired by formula \eqref{canonical}, these connections are defined by 
\begin{equation}\label{e-r} 
	g(\nabla^{\varepsilon,\rho}_XY,Z)=g(\nabla^g_XY,Z)-\varepsilon\, d\omega(JX,JY,JZ)-\rho\, d\omega(JX,Y,Z). 
\end{equation}
Note that the Gauduchon connections $\nabla^t$ correspond to $\nabla^{\varepsilon,\rho}$ with $\varepsilon+\rho=\frac12$ and $t=1-4\varepsilon$. In particular, $\nabla^b=\nabla^{1/2,0}$; moreover, $\nabla^g=\nabla^{0,0}$.

It is clear that all these connections are metric, i.e., $\nabla^{\varepsilon,\rho}g=0$, since the expression $\varepsilon\, d\omega(JX,JY,JZ)+\rho\, d\omega(JX,Y,Z)$ is skew-symmetric in $Y,Z$. However, it is not true that they are all compatible with $J$: it was proved in \cite{OUV} that 
\[ \nabla^{\varepsilon,\rho}J= -2\left(\varepsilon+\rho-\frac12\right)\nabla^gJ;\]
therefore, if $(M,J,g)$ is not K\"ahler, then $\nabla^{\varepsilon,\rho}$ is a Hermitian connection if and only if $\varepsilon+\rho=\frac12$, i.e., it is a Gauduchon connection.

\medskip 

With the exact same proof of Theorem \ref{gauduchon} we can show the following result.

\begin{theorem}
Let $(M,J,g)$ be a connected compact LCK manifold with corresponding Lee form $\theta$ and consider the metric connections $\nabla^{\varepsilon,\rho}$ on $M$ defined as in \eqref{e-r}. If $\nabla^{\varepsilon,\rho}\theta=0$ for some $(\varepsilon,\rho)\in \R^2$ then either:
\begin{enumerate}
    \item[(a)] $\theta=0$, i.e., $(M,J,g)$ is K\"ahler (and therefore $\nabla^{\varepsilon,\rho}=\nabla^g$ for all $(\varepsilon,\rho)$), or
    \item[(b)] $(M,J,g)$ is Vaisman and $\rho=0$.
\end{enumerate}
\end{theorem}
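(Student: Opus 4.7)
The plan is to mirror the proof of Theorem \ref{gauduchon} almost verbatim, the key observation being that the entire $\varepsilon$-contribution to $\nabla^{\varepsilon,\rho}\theta$ vanishes thanks to the LCK condition, leaving only a $\rho$-dependence of exactly the same shape as in the Gauduchon case.

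First I would compute, for $X,Y\in\mathfrak{X}(M)$,
\[ (\nabla^{\varepsilon,\rho}_X\theta)(Y) = g(\nabla^g_X A,Y) - \varepsilon\, d\omega(JX,JA,JY) - \rho\, d\omega(JX,A,Y), \]
using \eqref{e-r} and the definition of $A$ as the metric dual of $\theta$. By \eqref{3-form} the middle summand equals $\varepsilon\, c(X,A,Y)$, which is zero by Corollary \ref{torsion-A}; this kills the $\varepsilon$ dependence completely. Expanding the remaining term via $d\omega=\theta\wedge\omega$ gives
\[ d\omega(JX,A,Y) = \theta(JX)\,g(JA,Y) + |A|^2\,g(X,Y) - \theta(X)\theta(Y), \]
so the hypothesis $\nabla^{\varepsilon,\rho}\theta=0$ becomes
\[ \nabla^g_X A = \rho\left(|A|^2\,X - \theta(X)\,A + \theta(JX)\,JA\right), \]
which is precisely formula \eqref{nabla-t} with $(t+1)/4$ replaced by $\rho$.

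From this point the argument proceeds verbatim as in Theorem \ref{gauduchon}. The $(1,1)$-tensor $\nabla^g A$ is symmetric (matching $d\theta=0$) and commutes with $J$, so by \cite[Lemma 3]{MMO} the vector field $A$ is holomorphic. Since $\nabla^{\varepsilon,\rho}$ is metric and $\theta$ is $\nabla^{\varepsilon,\rho}$-parallel, $|A|$ is a constant $c\geq 0$. If $c=0$ then $\theta\equiv 0$ and $(M,J,g)$ is K\"ahler, which is case (a). If $c>0$, then $A$ is a holomorphic vector field of constant length on a compact LCK manifold, so by \cite[Theorem 1(i)]{MMO} the manifold is Vaisman, i.e.\ $\nabla^g A=0$. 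Choosing any $p\in M$ and any nonzero $v\in T_pM$ with $\theta_p(v)=\theta_p(J_p v)=0$ (possible since $\dim M\geq 4$), the displayed identity collapses to $0=\rho c^2 v$, forcing $\rho=0$, which is case (b).

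There is essentially no obstacle here: the result is nearly automatic once one notices that Corollary \ref{torsion-A} annihilates the $\varepsilon$-term, reducing the $2$-parameter statement to the same analytic input already used to settle the Gauduchon case. The only point requiring minor care is the existence of a vector $v$ orthogonal to both $A_p$ and $J_pA_p$ in the final step, which is guaranteed by the standing assumption $\dim M\geq 4$.
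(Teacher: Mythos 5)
Your proof is correct and follows exactly the route the paper intends: the paper simply states that the result follows ``with the exact same proof of Theorem \ref{gauduchon}'', and your argument is precisely that proof with $\frac{t-1}{4}$ and $\frac{t+1}{4}$ replaced by $\varepsilon$ and $\rho$, the $\varepsilon$-term dying by Corollary \ref{torsion-A} and the endgame running through \cite[Lemma 3 and Theorem 1(i)]{MMO}. (The only cosmetic slip is a sign on the middle summand, which is immaterial since that term vanishes.)
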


\ 

We point out that on a Vaisman manifold $(M,J,g)$ with Lee form $\theta$ the line $\{\nabla^{\varepsilon, 0}\mid \varepsilon\in \R\}$ of metric connections goes through $\nabla^g$ (for $\varepsilon=0$) and $\nabla^b$ (for $\varepsilon=1/2)$ and, moreover, $\theta$ is parallel with respect to each one of them. 

\begin{corollary}
On a Vaisman manifold $(M,J,g)$ with Lee form $\theta$ there exist infinite metric connections which respect to which $\theta$ is parallel.
\end{corollary}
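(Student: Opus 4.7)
The plan is to verify the claim implicit in the paragraph preceding the corollary: namely, that every connection on the line $\{\nabla^{\varepsilon,0}\mid \varepsilon\in\mathbb{R}\}$ leaves $\theta$ parallel. This is an infinite family of metric connections, so the corollary follows at once.

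First I would compute $(\nabla^{\varepsilon,\rho}_X\theta)(Y) = g(\nabla^{\varepsilon,\rho}_X A,Y)$ for arbitrary $(\varepsilon,\rho)$, using the defining formula \eqref{e-r}:
\[
g(\nabla^{\varepsilon,\rho}_X A,Y) = g(\nabla^g_X A,Y) - \varepsilon\, d\omega(JX,JA,JY) - \rho\, d\omega(JX,A,Y).
\]
Two simplifications then kick in for a Vaisman manifold. The first term vanishes because $\nabla^g A=0$ (by definition of Vaisman). The second term vanishes because, by \eqref{3-form}, $d\omega(JX,JA,JY) = c(X,A,Y)$, and by Corollary~\ref{torsion-A} the torsion $3$-form $c$ satisfies $c(A,\cdot,\cdot)=0$. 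Hence
\[
(\nabla^{\varepsilon,\rho}_X\theta)(Y) = -\rho\, d\omega(JX,A,Y).
\]

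Specializing to $\rho=0$, I obtain $\nabla^{\varepsilon,0}\theta=0$ for every $\varepsilon\in\mathbb{R}$. Since each $\nabla^{\varepsilon,0}$ is a metric connection (all the $\nabla^{\varepsilon,\rho}$ are, as noted just after \eqref{e-r}), and the parameter $\varepsilon$ ranges over an uncountable set, this produces infinitely many metric connections preserving $\theta$, proving the corollary. As a sanity check one recovers the two distinguished points on this line: $\varepsilon=0$ gives $\nabla^g$ and $\varepsilon=1/2$ gives $\nabla^b$, and $\theta$ is parallel with respect to both, in agreement with Theorem~\ref{theta-parallel}.

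There is no real obstacle here: the whole argument reduces to observing that the two potential obstructions on a Vaisman manifold, the Levi-Civita derivative of $A$ and the term $d\omega(JX,JA,JY)$, both vanish for free — the first by the very definition of Vaisman, the second by the already established Corollary~\ref{torsion-A}. The mildly noteworthy point is that the computation also shows the converse on the nose: on a Vaisman manifold, $\nabla^{\varepsilon,\rho}\theta=0$ forces $\rho=0$ (since $d\omega(JX,A,Y) = \theta(JX)g(JA,Y)+|A|^2 g(X,Y)-\theta(X)\theta(Y)$ is nonzero, e.g.\ for $X=Y\in\mathcal{D}$), so the line $\rho=0$ is exactly the locus of $\theta$-preserving connections within the family.
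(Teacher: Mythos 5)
Your proposal is correct and follows essentially the same route as the paper: the corollary is obtained by specializing the computation in the proof of Theorem~\ref{gauduchon} to the family $\nabla^{\varepsilon,\rho}$, where on a Vaisman manifold the term $g(\nabla^g_XA,Y)$ vanishes by definition and the $\varepsilon$-term vanishes via $d\omega(JX,JA,JY)=c(X,A,Y)=0$ (Corollary~\ref{torsion-A}), leaving only $-\rho\,d\omega(JX,A,Y)$ and hence $\nabla^{\varepsilon,0}\theta=0$ for every $\varepsilon$. Your closing observation that $d\omega(JX,A,Y)\neq 0$ (so the locus is exactly $\rho=0$) also implicitly guarantees, via $c=-J\theta\wedge\omega\neq 0$, that the connections $\nabla^{\varepsilon,0}$ are pairwise distinct, which is the only point one might have wished to see stated.
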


\

\end{document}